\numberwithin{equation}{section}
  \providecommand{\Xint}[1]{\mathchoice
    {\XXint\displaystyle\textstyle{#1}}%
    {\XXint\textstyle\scriptstyle{#1}}%
    {\XXint\scriptstyle\scriptscriptstyle{#1}}%
    {\XXint\scriptscriptstyle\scriptscriptstyle{#1}}%
    \!\int}
  \providecommand{\XXint}[3]{{\setbox0=\hbox{$#1{#2#3}{\int}$}
      \vcenter{\hbox{$#2#3$}}\kern-.5\wd0}}
  \providecommand{\dashint}{\mathop{\Xint-}}
\newtheorem{theorem}{Theorem}[section]
\newtheorem{corollary}[theorem]{Corollary}
\newtheorem{example}[theorem]{Example}
\newtheorem{lemma}[theorem]{Lemma}
\newtheorem{remark}[theorem]{Remark}
\newtheorem{theoremalph}{Theorem}
\newtheorem{definitionalph}{Definition}
\numberwithin{equation}{section}
\DeclareFontFamily{U}{mathx}{}
\DeclareFontShape{U}{mathx}{m}{n}{<-> mathx10}{}
\DeclareSymbolFont{mathx}{U}{mathx}{m}{n}
\DeclareMathAccent{\widehat}{0}{mathx}{"70}
\DeclareMathAccent{\widecheck}{0}{mathx}{"71}
\def\Mpl{\mathcal M_+}
\def\M{\mathcal M}
\def\om{\mathbb{R}^N}
\def\rn{\mathbb{R}^n}
\def\rM{\mathbb{R}^m}
\def\M{{\mathcal M}}
\def\N{{\mathbb N}}
\newcommand{\smallD}{{\scriptscriptstyle D}}
\def\supp{{\rm supp\,}}
\newcommand{\R}{\mathbb{R}}
\newcommand{\barint}{
\rule[.036in]{.12in}{.009in}\kern-.16in \displaystyle\int }
\newcommand{\barcal}{\mbox{$ \rule[.036in]{.11in}{.007in}\kern-.128in\int $}}
\let\@wraptoccontribs\wraptoccontribs
\mathchardef\mhyphen="2D
\title[
  Riesz potential estimates under co-canceling constraints
 %Sobolev inequalities for canceling  operators
 ]{
  Riesz potential estimates under co-canceling constraints
%Sobolev inequalities for elliptic  canceling  operators and Riesz potential estimates under co-canceling constraints
}
\author {Dominic Breit, Andrea Cianchi \& Daniel Spector}
\address{Dominic Breit, 
  Institute of Mathematics\\
  TU Clausthal, 
Erzstra\ss e 1, 
Clausthal-Zellerfeld, 
38678 Germany} \email{dominic.breit@tu-clausthal.de}
\address{Andrea Cianchi, Dipartimento di Matematica e Informatica \lq\lq U. Dini"\\
Universit\`a di Firenze,
Viale Morgagni 67/a,
50134 Firenze,
Italy} \email{andrea.cianchi@unifi.it}
\address{Daniel Spector, Department of Mathematics \\ National Taiwan Normal University  No. 88, Section 4, Tingzhou Road, Wenshan District, Taipei City, Taiwan 116, R.O.C.
\newline
and
\newline
National Center for Theoretical Sciences\\
No. 1 Sec. 4 Roosevelt Rd., National Taiwan
University\\
Taipei, 106, Taiwan
\newline
and
\newline
Department of Mathematics \\ 
University of Pittsburgh, 
Pittsburgh, PA 
15261 
USA
}
\email{spectda@gapps.ntnu.edu.tw}
\subjclass[2020]{42B99, 46E35, 46E30}
\keywords{Riesz potentials; Sobolev inequalities;  co-canceling differential operators;    rearrangement-invariant spaces; Orlicz spaces}
\begin{document}

\begin{abstract} 
Inequalities  for Riesz potentials are well-known to be equivalent to Sobolev inequalities of the same order for domain norms \lq\lq far" from $L^1$, but to be weaker otherwise. Recent contributions by Van Schaftingen,  by Hernandez, Rai\c{t}\u{a} and Spector, and by Stolyarov 
  proved that this gap can be filled in Riesz potential inequalities for vector-valued functions in  $L^1$ fulfilling a co-canceling differential condition. The present work demonstrates that such a property is not 
 just peculiar to the space $L^1$. 
 As   a consequence, Riesz potential inequalities under the  co-canceling constraint   are offered for general families of rearrangement-invariant spaces, such as the Orlicz spaces and the Lorentz-Zygmund spaces. Especially relevant instances of inequalities for domain spaces neighboring   $L^1$ are singled out.
\end{abstract}

\maketitle

%\todo[inline]{A: we have to add \lq\lq elliptic" whenever we deal with canceling operators. To avoid repeating \lq\lq $k$-th order homogeneous elliptic canceling differential operator" every time, shall we just define it a \lq\lq $k$-th order admissible operator"?}

\section{Introduction}\label{intro}

As was shown in the influential work of Sobolev \cite{sobolev}, the inequalities nowadays named after him are intimately connected to estimates for Riesz potentials in the Lebesgue spaces $L^p$, for $p>1$. On the other hand,  they take a different form in the borderline case when $p=1$.
The moral this paper aims at advertising is that this connection can be restored in \emph{any rearrangement-invariant function space}, provided that the potential inequalities are restricted to vector-valued  functions satisfying \emph{co-canceling} conditions.
\iffalse
This is not only shown to be true for Sobolev inequalities for the standard first or higher order gradient, but also for any other linear homogeneous elliptic \emph{canceling} differential operator. 
\fi
%Since Sobolev inequalities in rearrangement-invariant spaces are fully characterized, 
%As a consequence, a comprehensive theory of optimal Sobolev embeddings, for these operators, in the family of all rearrangement-invariant spaces  is developed.

%\subsection{Riesz potential estimates under co-canceling constraints}
The Riesz potential operator 
$I_\alpha$, with $\alpha \in (0, n)$, is classically defined on locally integrable functions $F: \rn \to \mathbb R^m$ as 
\begin{align*}
I_\alpha F(x) = \frac{1}{\gamma(\alpha)}\int_{\mathbb{R}^n} \frac{F(y)}{|x-y|^{n-\alpha}}\;dy \quad \text{for $x\in \rn$,}
\end{align*}
whenever the integral on the right-hand side is finite. Here, $n, m \in \N$, and 
 $\gamma(\alpha)$ is 
 a suitable normalization constant.

 A central result from \cite{sobolev} asserts that,  if $1<p<n/\alpha$, then there exists a constant $c=c(\alpha,p,n)$ such that
\begin{align}\label{sobolev_ineq}
\|I_\alpha F \|_{L^{\frac {np}{n-\alpha p}}(\mathbb{R}^n, \mathbb R^m)} \leq c \|F\|_{L^p(\mathbb{R}^n,  \mathbb R^m)}
\end{align}
for all $F \in L^p(\mathbb{R}^n, \mathbb R^m)$. The 
 inequality \eqref{sobolev_ineq}
is the key step in  Sobolev's proof of the inequality which, for  $n, \ell, k\in \N$, with $n \geq 2$, and $1<p<\frac nk$, tells us that
\begin{align}
    \label{may1}
    {\|u\|_{L^{\frac{np}{n-k p}}(\rn, \mathbb{R}^\ell)}\leq c \|\nabla ^k u\|_{L^p(\rn, \mathbb{R}^{\ell \times n^k})}}
\end{align}
for some constant $c$ and every function  $u:\mathbb R^n\rightarrow\mathbb R^\ell$ in the homogeneous Sobolev space of 
$k$-times weakly differentiable functions decaying to zero near infinity. Here,  %$$\|u\|_{V^{\alpha}L^p(\rn)}= \|\nabla ^\alpha u\|_{L^p(\rn)}$$
%and 
$\nabla ^k u$ stands for the  tensor of all $k$-th order derivatives of $u$. 
Moreover, throughout this section,   the notion of decay near infinity will   be left unspecified, since it may have slightly different meanings on different occurrences.  
%It will be made precise in the following sections.

 A link between \eqref{sobolev_ineq} and \eqref{may1}  depends on the inequality
\begin{align}
    \label{may2}
    |u(x)|\leq c I_k(|\nabla ^k u|)(x)
%    
%    \int_{\mathbb{R}^n} \frac{|\nabla ^\alpha u(y)|}{|x-y|^{n-\alpha}}\;dy 
\quad \text{for a.e. $x\in \rn$,}
\end{align}
which holds, if $1 \leq k <n$, for some constant $c=c(n, k)$, and every  $u$ as in \eqref{may1}.

As a consequence of the estimate \eqref{may2} and of boundedness properties  of the operator $I_\alpha$ between function spaces of diverse kinds, parallel Sobolev type inequalities involving the same function spaces are available. For instance, an improved version of \eqref{sobolev_ineq}, independently due to O'Neil \cite{oneil} and Peetre \cite{peetre}, reads:
\begin{align}\label{sobolev_lorentz}
\|I_\alpha F \|_{L^{\frac {np}{n-\alpha p},p}(\mathbb{R}^n, \mathbb R^m)} \leq c \|F\|_{L^p(\mathbb{R}^n, \mathbb R^m)}
\end{align}
for some constant $c$ and every $F \in L^p(\mathbb{R}^n, \mathbb R^m)$.  For $k$ and $p$ as in \eqref{may1}, this implies 
the enhanced Sobolev type inequality
\begin{align}
    \label{may3}
   {\|u\|_{L^{\frac{np}{n-k p},p}(\rn, \mathbb{R}^\ell)}\leq c \|\nabla ^k u\|_{L^p(\rn, \mathbb{R}^{\ell\times n^k})}}
\end{align}
for some constant $c$ and for every $u$ as in \eqref{may1}. 
%\todo[inline]{Daniel:  Does this Lorentz space target need to be added?}
Notice that  the Lorentz spaces $L^{\frac {np}{n-\alpha p},p}(\mathbb{R}^n, \rM)$ and $L^{\frac {np}{n-k p},p}(\mathbb{R}^n, \mathbb R^\ell)$  can be shown to be optimal  (i.e. the smallest possible)  among all rearrangement-invariant target spaces in    
\eqref{sobolev_lorentz}  and \eqref{may3}.

So far we have assumed that $p>1$. This restriction is critical, since 
the correspondence between Sobolev and potential estimates 
 of the same order breaks down in the borderline case when $p=1$. Whereas  the inequality \eqref{sobolev_ineq} fails for $p=1$, the inequalities \eqref{may1} and \eqref{may3}
classically continue to hold via different approaches, such as that of \cite{gagliardo} and \cite{nirenberg} based on one-dimensional integration and H\"older's inequality, or that of \cite{mazya} and \cite{federer}, relying upon the isoperimetric theorem and the coarea formula.

%\todo[inline]{Daniel:  The following paragraph now seems a little out of place.  It connects directly with the paragraph after the next, maybe we can move it or edit/revise the paragraph in between?}
A substitute for  \eqref{sobolev_ineq} involves the Marcinkiweicz space $L^{\frac n{n-\alpha}, \infty}(\mathbb{R}^n, \mathbb R^m)$, also called weak Lebesgue space $L^{\frac n{n-\alpha}}(\mathbb{R}^n, \mathbb R^m)$, on the left-hand side. Namely, one has  
\begin{align}\label{weaktype}
\|I_\alpha F \|_{L^{\frac {n}{n-\alpha},\infty}(\mathbb{R}^n, \mathbb R^m)} \leq c \|F\|_{L^1(\mathbb{R}^n, \mathbb R^m)}
\end{align}
for some constant $c$ and every $F \in L^1(\mathbb{R}^n, \mathbb R^m)$. Moreover, the space $L^{\frac {n}{n-\alpha},\infty}(\mathbb{R}^n, \mathbb R^m)$ is the best (smallest) possible among all rearrangement-invariant target spaces in \eqref{weaktype}.

The results recalled above are prototypical examples of a general principle, which, loosely speaking, affirms that Riesz potential and Sobolev inequalities of the same order share the same domain and target spaces when the former is not too close to $L^1$, but, if this is not the case, then a Riesz potential inequality is essentially weaker than its Sobolev analog. As will be clear from our discussion, this qualitative statement can be made precise whenever 
Sobolev spaces, possibly of fractional-order, associated with 
rearrangement-invariant norms are concerned.

  In this connection,
a  new phenomenon was discovered in the series of works \cites{Strauss, BourgainBrezisMironescu, BourgainBrezis2004, BourgainBrezis2007, DG,DG2,GRV,LanzaniStein, RaitaSpector, RSS, Spector-VanSchaftingen-2018,VS,VS2,VS2a,VS3,VS4,SSVS}.  In particular, from  \cite[Proposition 8.7]{VS3} it can be deduced that
 the estimate \eqref{weaktype} upgrades to a 
  strong-type estimate, 
where the Marcinkiewicz norm is replaced by the norm in the   Lebesgue space $L^{\frac n{n-\alpha}}(\mathbb{R}^n,\mathbb{R}^m)$, provided that $m\geq 2$ and the admissible
functions $F \in L^1(\mathbb{R}^n,\mathbb{R}^m)$ are subject to the constraint 
\begin{align}
    \label{constr}
    \mathcal L F=0
\end{align}
in the distributional sense,
where 
$\mathcal L$ is any linear homogeneous co-canceling differential operator of order $k\in \N$. Basic, yet important, instances of first-order operators of this kind are provided by the divergence and the curl. The higher-order divergence operator also belongs to this class. The general notion of co-canceling operator was introduced in \cite[Definition 1.3]{VS3} and is recalled at the beginning of Section \ref{main}.

 A further improvement of the result from \cite{VS3} asserts that, under the constraint \eqref{constr}, the inequality \eqref{sobolev_lorentz} also holds  for $p=1$. 
Namely, if $m \geq 2$, then 
\begin{align}\label{false1'}
\|I_\alpha F \|_{L^{\frac n{n-\alpha}, 1}(\mathbb{R}^n,\mathbb{R}^m)} \leq c \|F\|_{L^1(\mathbb{R}^n,\mathbb{R}^m)}
\end{align}
for some constant $c$ and every function $F \in L^1_{\mathcal L}(\mathbb{R}^n,\mathbb{R}^m)$. Here, the notation $L^1_{\mathcal L}(\mathbb{R}^n,\mathbb{R}^m)$ stands for the space of those functions $F\in L^1(\mathbb{R}^n,\mathbb{R}^m)$ which  fulfill  \eqref{constr}. 
This inequality is established in \cites{HS,HRS} for first order co-canceling operators and  in  \cite{Stolyarov} in the higher order case.

We shall show that such a phenomenon is not peculiar of the domain space $L^1(\mathbb{R}^n,\mathbb{R}^m)$, but surfaces for any rearrangement-invariant domain and target spaces. This is the content of
%a consequence of our result about Riesz potential inequalities for vector fields $F$ subject to the co-canceling condition \eqref{constr}, contained in 
Theorem \ref{sobolev}, which   tells us that, if $X(\rn, \mathbb R^m)$ and  $Y(\rn, \mathbb R^m)$ are rearrangement-invariant spaces and $\alpha \in (0,n)$, then the inequality:
\begin{align}\label{may6}
\|I_\alpha F \|_{Y(\mathbb R^n, \mathbb R^m)} \leq c \|F\|_{X(\mathbb{R}^n, \mathbb R^m)}
\end{align}
for some   constant $c$ and all $F\in X_{\mathcal L}(\mathbb{R}^n, \mathbb R^m)$ is a consequence of the  much simpler one-dimensional Hardy type inequality:
\begin{equation}\label{bound1intro}
\bigg\|\int_s^\infty r^{-1+\frac \alpha n}f(r)\, dr \bigg\|_{Y(0,\infty)} \leq c \|f\|_{X(0, \infty)}
\end{equation}
for every $f \in X(0,\infty)$,  in the  respective one-dimensional representation spaces $X(0, \infty)$ and $Y(0, \infty)$.
\\ On the other hand, the Hardy inequality \eqref{bound1intro}, with $\alpha =k\in \N$ and  $1 \leq k <n$, 
\iffalse
namely 
\begin{equation}\label{bound1k}
\bigg\|\int_s^\infty r^{-1+\frac k n}f(r)\, dr \bigg\|_{Y(0,\infty)} \leq c \|f\|_{X(0, \infty)}
\end{equation}
 for  $f \in X(0,\infty)$,
 \fi
is necessary and sufficient for the Sobolev  inequality
%\todo[inline]{Dominic: Shouldn't we give a reference for this?}
\begin{align}
    \label{may5}
\|u\|_{Y(\rn, \mathbb{R}^{\ell})}\leq c \|\nabla ^ku\|_{X(\rn, \mathbb{R}^{\ell\times n^k})}
\end{align}
%\todo[inline]{Daniel:  One can decide whether to insert the target spaces here.}
to hold 
 for some constant $c$ and every $k$-times weakly differentiable function $u$ decaying to zero near infinity.

 %This is shown in \cite[Theorem 2.3]{mihula} for functions $u$ from the space $V^{k}_{\rm d}X(\rn)$ which decay near infinity in a weaker sense than that to be adopted in this paper. An inspection
  
 Therefore, loosely speaking, a central message of this contribution is that, if   $X(\rn, \rM)$ and  $Y(\rn, \rM)$ are rearrangement-invariant spaces and $\alpha =k \in \N$, then:
  \begin{verse}
  \emph{\lq\lq The constrained Riesz potential  inequality \eqref{may6}  holds  
  whenever the Sobolev inequality \eqref{may5}
holds."}
\end{verse}
This principle is true  even for non-integer $\alpha$,  provided that a fractional space of Gagliardo-Slobodeskji type of order $\alpha$, built upon the norm in $X(\mathbb{R}^n, \rM)$, is well defined. Besides the Lebesgue norms, to which the classical Gagliardo-Slobodeskji spaces are associated, this happens, for instance, when $X(\mathbb{R}^n, \rM)$ is an Orlicz space -- see \cite{ACPS}.
Indeed, 
 the Sobolev inequality \eqref{may5} and the relevant one-dimensional inequality are equivalent also  in this case \cite[Theorem 3.7]{ACPS_NA}.

Riesz potential inequalities  under co-canceling constraints as in  \eqref{may6}, for quite general families of rearrangement-invariant spaces $X(\mathbb{R}^n, \mathbb R^m)$ and $Y(\mathbb{R}^n, \mathbb R^m)$, are presented in Section \ref{main}. 
%The corresponding Sobolev inequalities of the form \eqref{sobolev1intro} for elliptic canceling operators are displayed in Section \ref{S:sobolev}.
Here, we content ourselves with exhibiting their implementation to specific special instances.

It is  clear from the discussion above that our results are most relevant for borderline spaces $X(\mathbb{R}^n, \mathbb R^m)$
 which are \lq\lq close" to $L^1(\mathbb{R}^n, \mathbb R^m)$.
  Otherwise, the  inequality \eqref{may6}, even without the constraint \eqref{constr},  holds with the same spaces as in the Sobolev inequality \eqref{may5}.  

%{\color{magenta} In turn, the   inequality \eqref{may6} without the co-canceling constraint implies 
%the inequality \eqref{sobolev1intro}
%with the same norms as in %\eqref{may5}. This can be deduced 
% from a representation formula for functions $u$ in terms of $\mathcal A_k(D)u$ via an integral operator associated with a kernel having a Riesz potential type growth.}

%In order to give an  idea   of the inequalities that will be derived,
Thus, below we single out  a few illustrative examples concerning  spaces neighboring $L^1$. To begin with,
 consider
  a perturbation of the $L^1$ norm by a logarithmic factor. Given any $r\geq 0$, we have that
\begin{align}
    \label{exlog}
    \|I_\alpha F\|_{L^{\frac n{n-\alpha}}(\log L)^{\frac {nr}{n-\alpha}}(\rn, \rM)} \leq c \|F\|_{L^1(\log L)^{r}(\rn, \rM)}
\end{align}
for some constant $c$ and every $F\in L^1(\log L)^{r}_{\mathcal L}(\rn, \rM)$. This is a special case of Example \ref{ex1}, Section \ref{main}. Here, $L^1(\log L)^r(\rn, \rM)$ denotes the Orlicz space associated with a Young function of the form $t(\log (b+t))^r$, and  $b$ is sufficiently large to ensure that this function is convex.

Let us emphasize that the inequality \eqref{exlog}, as well all the Riesz potential inequalities in the remaining part of this section,  fail  if the constraint \eqref{constr} is dropped.

 Next, denote by $L^1(\log \log L)^{r}(\rn, \rM)$   the Orlicz space built upon a Young function of the form $t(\log \log (b+t))^r$ for sufficiently large $b$. From Example \ref{ex2}, Section \ref{main}, we obtain that
\begin{align}
    \label{exloglog}
    \|I_\alpha F\|_{L^{\frac n{n-\alpha}}(\log \log L)^{\frac {nr}{n-\alpha}}(\rn, \rM)} \leq c \|F\|_{L^1(\log \log L)^{r}(\rn, \rM)}
\end{align}
for some constant $c$ and  every $F\in L^1(\log \log L)^{r}_{\mathcal L}(\rn, \rM)$.

 The estimates \eqref{exlog} and \eqref{exloglog}
admit  improvements in the framework of Lorentz-Zygmund target norms. 
In particular, the following inequality holds:
\begin{align}
    \label{exLZ}
    \|I_\alpha F\|_{L^{\frac n{n-\alpha},1,r}(\rn, \rM)} \leq c \|F\|_{L^1(\log L)^{r}(\rn, \rM)}
\end{align}
for some constant $c$ and every $F\in L^1(\log L)^{r}_{\mathcal L}(\rn, \rM)$, see Example \ref{ex orlicz-lorentz}, Section \ref{main}.   Observe that this inequality actually  improves  \eqref{exlog}, inasmuch as the Lorentz-Zygmund space
 $L^{\frac n{n-\alpha},1,r}(\rn, \rM) \subsetneq L^{\frac n{n-\alpha}}(\log L)^{\frac {nr}{n-\alpha}}(\rn, \rM)$.  A parallel improvement of the inequality \eqref{exloglog} can be obtained via Theorem \ref{sobolev-orliczlorentz}, Section \ref{main}. 

 Further examples of  constrained potential inequalities in  borderline spaces involve  Lorentz-Zygmund domain spaces, with first exponent equal to $1$. This requires   restricting to functions $F$ vanishing outside a set $\Omega \subset \rn$ with finite measure and to consider norms over $\Omega$,
 since the Lorentz-Zygmund spaces in question are trivially equal to $\{0\}$ on sets with infinite measure. Example \ref{lorentzzygmund}, Section \ref{main}, asserts that,
 given $q\in [1,\infty)$ and $r>-\frac 1q$, 
 \begin{align}
    \label{exLZdom}
    \|I_\alpha F\|_{L^{\frac n{n-\alpha},q,r+1}(\Omega, \rM)} \leq c \|F\|_{L^{(1,q,r)}(\Omega, \rM)}
\end{align}
for some constant $c$ and every $F\in L^{(1,q,r)}_\mathcal L(\rn, \rM)$ vanishing outside $\Omega$.   Here, 
$L^{(1,q,r)} L(\Omega, \rM)$ denotes a Lorentz space endowed with a norm defined via the maximal function associated with the decreasing rearrangement, instead of the plain decreasing rearrangement.
Analogous results  can be derived also for   $r\leq -\frac 1q$. However, in this range of values of the parameters,   the optimal target spaces belong to even more general classes. For simplicity, we prefer not to enter this question.

 Let us note that, when $\alpha \in \N$, the target spaces in \eqref{exlog} and \eqref{exloglog} are known to be   optimal among all Orlicz spaces 
 in the Sobolev inequalities of order $\alpha$ with the same domain spaces. An analogous property holds with regard to the target space in \eqref{exLZ} and (at least for $q>1$) also in \eqref{exLZdom}  in the class of rearrangement-invariant spaces. We refer to \cite{cianchi_ibero, cianchi_forum} for the first three Sobolev inequalities and \cite{cavaliere-new} for the last one. 

 The key tool in our approach to  constrained Riesz potential inequalities is an estimate, in rearrangement form, for  Riesz potentials of arbitrary order divergence free vector fields. This is the content of Theorem \ref{K-CZ}, which is a special case of \cite[Theorem 5.1]{BCS_canceling}, which also holds for the composition of $I_\alpha$ with linear operators from certain classes. A combination of Theorem \ref{K-CZ} with 
 specific properties of rearrangement-invariant spaces  yields the main result of this paper, contained in Theorem \ref{sobolev}, in the special case when the co-canceling operator is the divergence operator of any order. The case of a general  co-canceling operator $\mathcal L$ is reduced to the latter in Lemma \ref{reductionk}, 
 via a result of \cite{HRS}. Although, as mentioned above,  Theorem \ref{K-CZ} follows through results from \cite{BCS_canceling}, we take the opportunity here to offer a self contained proof for the classical first-order divergence operator. 
  It involves somewhat simpler notation and arguments, and just focuses on the 
Riesz potential operator.

\section{Function-space background}\label{back}

%\todo[inline]{A: we have to check if something can be deleted in this section}
%  This section is devoted to basic definitions and properties concerning functions and function spaces playing a role in the paper.
  
Throughout the paper, the relation $\lq\lq \lesssim "$ between two positive expressions means that the former is bounded by the latter, up to a positive multiplicative constant depending on quantities to be specified.
The relations  $\lq\lq \gtrsim "$ and $\lq\lq \approx "$ are defined accordingly. 
%have  between two expressions means that   they are bounded by each
%other  up to multiplicative constants depending on quantities to be specified.

The notation $|E|$ is adopted for the Lebesgue measure of a set $E\subset \rn$. Let $\Omega $ be a measurable subset of $\rn$, with $n \in \mathbb N$, and let $m \in \mathbb N$.  We denote by 
$\M(\Omega, \mathbb R^m)$ 
 the space of all Lebesgue-measurable functions
$F : \Omega \to \mathbb R^m$. When $m=1$, we shall simply write $\M(\Omega)$. A parallel convention will be adopted for other function spaces. 
%Usually, vector-valued functions will be denoted by capital letters, whereas lower case letters will be adopted in the scalar-valued case.
 Moreover, we  set $\Mpl(\Omega)=\{f\in\M(\Omega)\colon f\geq 0 \
\textup{a.e. in}\ \Omega\}$.

The \emph{decreasing rearrangement}  $F^{\ast}:[0, \infty )\to
[0,\infty]$ of a function $F \in \M(\Omega, \mathbb R^m)$ is   given by
\begin{equation*}
F^{\ast}(s) = \inf \{t\geq 0: |\{x\in \Omega : |F(x)|>t \}|\leq s \}
\qquad \hbox{for $s \in [0,\infty)$}.
\end{equation*}
The function  $F^{**}: (0, \infty ) \to [0, \infty )$ is defined as
\begin{equation}\label{c15}
F^{**}(s)=\frac{1}{s}\int_0^s F^*(r)\,dr \qquad \hbox{for $s>0$}.
\end{equation}
One has
\begin{align}
   \label{aug11}
    F^{**}(s)= \frac 1s \sup\bigg\{ \int_E|F|\,dx: E\subset \Omega, |E|=s\bigg\}.
\end{align}
%The Hardy-Littlewood inequality tells us that
%\begin{equation}\label{HL}
%\int_{\Omega}|F(x)| |G(x)|dx\leq %\int_0^\infty F^*(s) G^*(s)\, ds
%\end{equation}
%for $F, G\in \mathcal (\Omega, \rM)$.
Assume that $L\in (0,\infty]$. A 
\textit{function norm} on $(0, L)$ is a functional
 $\|\cdot\|_{X(0,L)}{:\Mpl(0,L)\to[0,\infty]}$ such that, for all functions $f, g\in \Mpl(0,L)$, all sequences
$\{f_k\} \subset {\Mpl(0,L)}$, and every $\lambda \geq 0$:
\begin{itemize}
\item[(P1)]\quad $\|f\|_{X(0,L)}=0$ if and only if $f=0$ a.e.;
$\|\lambda f\|_{X(0,L)}= \lambda \|f\|_{X(0,L)}$; \par\noindent \quad
$\|f+g\|_{X(0,L)}\leq \|f\|_{X(0,L)}+ \|g\|_{X(0,L)}$;
\item[(P2)]\quad $ f \le g$ a.e.\  implies $\|f\|_{X(0,L)}
\le \|g\|_{X(0,L)}$;
\item[(P3)]\quad $f_k \nearrow f$ a.e.\
implies $\|f_k\|_{X(0,L)} \nearrow \|f\|_{X(0,L)}$;
\item[(P4)]\quad $\|\chi _E\|_{X(0,L)}<\infty$ if $|E| < \infty$;
\item[(P5)]\quad if $|E|< \infty$, then there exists a constant
 $c$, depending on $E$ and $X(0,L)$, such that \\   $\int_E f(s)\,ds \le c
\|f\|_{X(0,L)}$.
\end{itemize}
Here, $E$ stands for a measurable set in $(0,L)$, and  $\chi_E$  its characteristic function.
Under the additional assumption that
\begin{itemize}
\item[(P6)]\quad $\|f\|_{X(0,L)} = \|g\|_{X(0,L)}$ whenever $f\sp* = g\sp *$,
\end{itemize}
the functional $\|\cdot\|_{X(0,L)}$ is called a
\textit{rearrangement-invariant function norm}.
\\
The \textit{associate function norm}  $\|\cdot\|_{X'(0,L)}$ of a function norm $\|\cdot\|_{X(0,L)}$ is  defined as
$$
\|f\|_{X'(0,L)}=\sup_{\begin{tiny}
                        \begin{array}{c}
                       {g\in{\Mpl(0,L)}}\\
                        \|g\|_{X(0,L)}\leq 1
                        \end{array}
                      \end{tiny}}
\int_0^{L}f(s)g(s)ds
$$
for $ f\in\Mpl(0,L)$.

Assume that $\Omega$ is a measurable set in $\rn$, and let
 $\|\cdot\|_{X(0,|\Omega|)}$ be   a rearrangement-invariant function norm.  Then the space $X(\Omega, \rM)$ is
defined as the set of all  functions  $F \in\M(\Omega, \rM)$
for which the expression
\begin{equation}\label{norm}
\|F\|_{X(\Omega, \rM)}= \|F^*\|_{X(0,|\Omega|)} 
%
%begin{cases}  \|u\sp*_\nu\|_{X(0,\infty)} & \quad \hbox{if $\nu (\mathcal R)= \infty$}
%\\
%\|\phi\sp*_\nu(\nu(\mathcal R)t)\|_{X(0,1)} & \quad \hbox{if $\nu (\mathcal R)< \infty$,}
%\end{cases}
\end{equation}
is finite. The space $X(\Omega, \rM)$ is a Banach space, equipped
with the norm defined as \eqref{norm}. 
 The space $X(0,|\Omega|)$ is called the
\textit{representation space} of $X(\Omega, \rM)$.
\\
The \textit{associate
space}  $X'(\Omega, \rM)$
of   $X(\Omega, \rM)$ is
 the
rearrangement-invariant space   associated with the
function norm $\|\cdot\|_{X'(0,|\Omega|)}$.
%By property \eqref{X''}, $X''(\Omega)=X(\Omega)$.  
%$Hence, any rearrangement-invariant
%space $X(\Omega)$ is always the associate space of another
%rearrangement-invariant space, namely $X'(\Omega)$.
\\
The \textit{H\"older type inequality}
%\[
%\int_{0}\sp1f(t)g(t)\,dt\leq\|f\|_{X(0,L)}\|g\|_{X'(0,L)},
%\]
%holds for every $f,g\in\Mpl(0,L)$, and hence
\begin{equation}\label{holder}
\int_{\Omega}|F| |G|dx\leq\|F\|_{X(\Omega, \rM)}\|G\|_{X'(\Omega, \rM)}
\end{equation}
holds  for every $F\in X(\Omega, \rM)$ and $G \in X'(\Omega, \rM)$.
%\\ The norm $\|\cdot\|_{X(\Omega, \rM)}$ is said to be absolutely continuous if for every $F\in X(\Omega, \rM)$ and every sequence of sets $\{E_k\}$, with $E_k\subset \Omega$ and $E_k \to \emptyset$, one has that $$\|F\chi_{E_k}\|_{X(\Omega, \rM)}\to 0.$$
 \\ Hardy's lemma tells us that
\begin{align}
    \label{hardy}
    \text{if\,\, $F^{**}(s)\leq G^{**}(s)$,\,\, then \,\,$\|F\|_{X(\Omega, \rM)}\leq \|G\|_{X(\Omega, \rM)}$}
\end{align}
for every rearrangement-invariant space $X(\Omega, \rM)$ and for every $F, G \in \mathcal M(\Omega, \rM)$. 
\\
Let $X(\Omega, \rM)$ and $Y(\Omega, \rM)$ be rearrangement-invariant
spaces. The notation $X(\Omega, \rM) \to Y(\Omega, \rM)$ means that
$X(\Omega, \rM)$ is continuously embedded into $Y(\Omega, \rM)$; namely,
$\|F\|_{Y(\Omega, \rM)}\leq c\|F\|_{ X(\Omega, \rM)}$ for some constant   $c$ and every $F\in X(\Omega, \rM)$.
\\
Let $X(\rn, \rM)$ be a rearrangement-invariant space. Then
\begin{equation}\label{dec2}
{L^1(\rn, \rM)\cap L^\infty(\rn, \rM)} \to
X(\rn, \rM) \to {L^1(\rn, \rM)+L^\infty(\rn, \rM)}.
\end{equation}
If  $|\Omega|< \infty$, then
\begin{equation}\label{l1linf}
L^\infty (\Omega, \rM) \to X(\Omega, \rM) \to L^1(\Omega, \rM)
\end{equation}
for every rearrangement-invariant space
$X(\Omega, \rM)$.
\\ 
The following definition canonically produces 
a rearrangement-invariant space
 on the whole of $\rn$ from a 
rearrangement-invariant space $X(\Omega, \rM)$  on a measurable set $\Omega \subset \rn$. Let
 $\|\cdot\|_{X^e(0, \infty)}$ be the function norm given by 
\begin{equation}\label{sep25}
\|f\|_{X^e(0, \infty)} = \|f^*\|_{X(0, |\Omega|)}
\end{equation}
for $f \in \mathcal M_+(0,\infty)$.
Then define $X^e(\R^n, \rM)$ as the rearrangement-invariant space built upon the function norm $\|\cdot\|_{X^e(0, \infty)}$. Plainly,
\begin{equation}\label{feb83}
\|F\|_{X^e(\R^n, \rM)}=  \|F^*\|_{X(0, |\Omega|)}
\end{equation}
for every $F \in \mathcal M(\R^n, \rM)$. Furthermore, if $F =0$ a.e. in $\rn \setminus \Omega$, then 
\begin{equation}\label{extbis}
\|F\|_{X^e(\R^n, \rM)}=  \|F\|_{X(\Omega, \rM)}.
\end{equation}

Let $\{\rho_h\}$ be a family of smooth mollifiers, namely $\rho_h \in C^\infty_c(B_{1/h})$,   $\rho_h \geq 0$, and $\int_{\mathbb R^n}\rho_h (x)\,dx =1$ for $h \in \N$. Here, $B_R$ denotes the ball, centered at $0$, with radius $R$.
The convolution $F_h = F *\rho_h$ of a function $F \in L^1_{\rm loc}(\mathbb R^n, \rM)$ with $\rho_h$ satisfies the inequality:
\begin{equation}\label{nov1}
\int_0^s F_h^* (\tau) \, d\tau \leq \int_0^s F^* (\tau) \, d\tau \qquad \text{for $s \geq 0$,}
\end{equation}
 for every $h \in \N$.  Consequently,
\begin{equation}\label{nov2}
\|F_h\|_{X(\mathbb R^n, \rM)} \leq \|F\|_{X(\mathbb R^n, \rM)}
\end{equation}
for every  rearrangement-invariant space $X(\mathbb R^n, \rM)$ and every  $h\in \N$.

\iffalse
\begin{theoremalph}\label{convolution} Let $\rho_h \in C^\infty_c(B_{\frac 1k} (0))$ be such that $\rho_h \geq 0$ and $\int_{\mathbb R^n}\rho_h (x)\,dx =1$ for $h \in \N$. Given $F \in L^1_{\rm loc}(\mathbb R^n, \rM)$, set
$$F_k = F *\rho_h.$$
Then
\begin{equation}\label{nov1}
\int_0^t F_k^* (s) \, ds \leq \int_0^t F^* (s) \, ds \qquad \text{for $t \geq 0$,}
\end{equation}
for every $k \in N$.
Hence,
\begin{equation}\label{nov2}
\|F_\epsilon\|_{X(\mathbb R^n)} \leq \|F\|_{X(\mathbb R^n)}
\end{equation}
for every  rearrangement-invariant space $X(\mathbb R^n)$ and every  $k\in \nabla$.
\end{theoremalph}
\fi
\iffalse
\begin{proof} By O'Neil's convolution inequality,
\begin{equation}\label{nov3}
F_\epsilon ^{**}(t)\leq t F^{**}(t) \rho_\epsilon^{**}(t) + \int_t^\infty F^*(s)  \rho_\epsilon^{*}(s)\, ds \quad \text{for $t >0$.}
\end{equation}
Thus
    \begin{align}\label{approx3}
      F_\epsilon^{**} (t) & \leq F^{**}(t)\int _0^t \rho_\epsilon^{*}(s)\, ds
      + \int _t ^\infty F^*(s) \rho_\epsilon^{*}(s)\, ds
    \\ \nonumber & \leq F^{**}(t) \int _0^t  \rho_\epsilon^{*}(s)\, ds + F^*(t ) \int _t ^\infty   \rho_\epsilon^{*}(s)\, ds
   \\ \nonumber & \leq F^{**}(t) \int _0^\infty  \rho_\epsilon^{*}(s)\, ds
    \\ \nonumber & = F^{**}(t) \int _{\rn} \rho_\epsilon (x) dx =
   F^{**}(t) \quad \hbox{for $t >0,$}
      \end{align}
     namely inequality \eqref{nov1}.
\end{proof}
\fi

The $K$-functional of a couple of normed spaces $(Z_0, Z_1)$, which are both continuously embedded into some Hausdorff vector space, is defined as 
\begin{align}\label{Kfunct}
K(\zeta, t; Z_0, Z_1) = \inf \big\{  \|\zeta_0\|_{Z_0} + t \|\zeta_1\|_{Z_1}: \zeta = \zeta_0 + \zeta_1, \, \zeta_0 \in Z_0, \, \zeta_1 \in Z_1\big\} \quad \text{for $t>0$,}
%_{\begin{tiny} \begin{array}{c}
                %       {\zeta = \zeta_0 + \zeta_1}\\
                 %       {\zeta_0 \in Z_0, \, \zeta_1 \in Z_1}
                  %    \end{array}
                  %    \end{tiny}} \big(\|\zeta_0\|_{Z_0} + t \|\zeta_1\|_{Z_1}\big) \qquad \text{for $t>0$.}
\end{align}
for every $\zeta \in Z_0 + Z_1$.
\iffalse
 If $0<\theta <1$ and $1\leq q <\infty$, the interpolation space $(Z_0,Z_1)_{\theta,q}$ consists of all
 $\zeta \in Z_0 +Z_1$ for which the functional
 \begin{equation}\label{dec38}
 \|\zeta\|_{(Z_0,Z_1)_{\theta,q}} = \bigg(\int_0^\infty \big(t^{-\theta} K(\zeta, t; Z_0, Z_1)\big)^q\frac {dt}t\bigg)^{\frac 1q}
 \end{equation}
 is finite.\fi
If $n, m\in \mathbb N$ and   $\|\cdot \|_{X(0,\infty)}$ and  $\|\cdot \|_{Y(0,\infty)}$ are rearrangement-invariant function norms,
then
\begin{equation}\label{K-v1}
K(|F|, t; X(\rn), Y(\rn)) =K(F, t; X(\rn, \mathbb R^m), Y(\rn, \mathbb R^m)) 
%K(|\bfU|, t; X(\Omega), Y(\Omega))\leq  K(\bfU, t; X(\Omega, \mathbb R^m), Y(\Omega, \mathbb R^m)) 
%\leq m K(|\bfU|, t; X(\Omega), Y(\Omega))
\quad \text{for $t>0$,}
\end{equation}
for $F\in X(\rn, \mathbb R^m) +Y(\rn, \mathbb R^m)$. See \cite[Lemma 7.3]{Breit-Cianchi} for a proof.

Let $p\in [1,\infty)$ and $q\in [1,\infty]$. We denote by  $L^{(p,q)}(\Omega, \rM)$  the Lorentz space associated with the rearrangement invariant  function norm given by
\begin{align}
    \label{lorentz}
    \|f\|_{L^{(p,q)}(0,|\Omega|)}= 
    \big\|s^{1/p-1/q}f^{**}(s)\|_{L^q(0,|\Omega|)}
%    
   % \bigg(\int_0^L\big(f^{**}(s)s^{1/p}\big)^q\frac {ds}s\bigg)^{\frac 1q}
\end{align}
for $f\in \Mpl (0,|\Omega|)$. 
%If $p \in (1, \infty)$, then the norm \eqref{lorentz} is equivalent, up to multiplicative constants depending on $p$ and $q$, to the functional defined as 
Replacing $f^{**}$ with $f^*$ in \eqref{lorentz} results 
in the functional $\|\cdot\|_{L^{p,q}(\Omega, \rM)}$ and the space $L^{p,q}(\Omega, \rM)$. 
%
%By $L^{p,q}(\Omega, \rM)$ we denote the space of those functions such that
%\begin{align}
%    \label{nov105}
    %\|f\|_{L^{p,q}(0,|\Omega|)}=
    %\big\|s^{1/p-1/q}f^{*}(s)\|_{L^q(0,|\Omega|)}<\infty,
%\end{align}
%and by $\|F\|_{L^{p,q}(\Omega, \rM)(0,|\Omega|)}$ the functional defined as in \eqref{nov105}, with $f=|F|$.
    %for $f\in \Mpl (0,|\Omega|)$. 
    This functional is a norm if $1\leq q\leq p$, and  is equivalent to the norm $\|\cdot\|_{L^{(p,q)}(\Omega, \rM)}$ if $p\in (1,\infty)$.
    \\
    The class of Lorentz spaces includes that of Lebesgue spaces, since $L^{(p,p)}(\Omega, \rM)= L^p(\Omega, \rM)$, up to equivalent norms, for $p\in (1,\infty)$, and $L^{p,p}(\Omega, \rM)= L^p(\Omega, \rM)$ for $p\in [1,\infty)$.
    \\ If $q<\infty$, then the norm $\|\cdot\|_{L^{(p,q)}(\Omega, \rM)}$ is absolutely continuous.
    \\ 
    %Since $(L^{p,q})'(\Omega, \rM)= L^{p',q'}(\Omega, \rM)$, up to equivalent norms, for $p\in (1,\infty)$,the inequality \eqref{holder} yields:\todo[inline]{Daniel:  I believe this also requires $q<+\infty$ since the dual of $L^{p,\infty}$ is not $L^{p',1}$ }
 For $p\in (1,\infty)$, the H\"older inequality in Lorentz spaces takes the form
\begin{align}
    \label{holderlor}
    \int_{\Omega} |F||G|  \, dx \leq  \|F\|_{L^{p,q}(\Omega, \rM)} \|G\|_{L^{p',q'}(\Omega, \rM)}
\end{align}
 for  $F\in L^{p,q}(\Omega, \rM)$ and $G\in L^{p',q'}(\Omega, \rM)$. Here, $p'$ and $q'$ denote the H\"older conjugates of $p$ and $q$.
\\ 
Moreover, if $p\in (1,\infty)$, then 
\begin{align}
    \label{aug3}
\|F\|_{L^{p,q}(\rn, \rM)} \leq c \|F\|_{L^1(\rn, \rM)}^{\frac 1p} \|F\|_{L^\infty(\rn, \rM)}^{\frac 1{p'}}
\end{align}
for some constant $c$ and 
for every $F\in {L^1(\rn, \rM)\cap L^\infty(\rn, \rM)}$.
\iffalse
\todo[inline]{A: is there a reference for \eqref{aug3}?}
Indeed, the first embedding in \eqref{dec2} ensures that there exists a constant $c$ such that
\begin{align}
    \label{aug4}
    \|F\|_{L^{p,q}(\rn, \rM)} \leq c \big(\|F\|_{L^1(\rn, \rM)}+\|F\|_{L^\infty(\rn, \rM)} \big)
\end{align}
for  $F\in {L^1(\rn, \rM)\cap L^\infty(\rn, \rM)}$. Given $\lambda >0$, an application of the inequality \eqref{aug4} with $F$ replaced with the function $F_\lambda$, given by $F_\lambda(x)=F(\lambda x)$ for  $x\in \rn$, enables us to deduce that
\begin{align}
    \label{aug5}
    \|F\|_{L^{p,q}(\rn, \rM)} \leq c \big(\lambda ^{-\frac n{p'}}\|F\|_{L^1(\rn, \rM)}+\lambda ^{\frac np}\|F\|_{L^\infty(\rn, \rM)} \big)
\end{align}
for  $F\in {L^1(\rn, \rM)\cap L^\infty(\rn, \rM)}$. Here, we have made use of the fact that $F_\lambda ^*(s)=F^*(\lambda ^ns)$ for $s \geq 0$.
Choosing $\lambda$ in such a way that the addends in brackets on the right-hand side of the inequality  \eqref{aug5} agree yields \eqref{aug3}.
\fi
\\
 The Lorentz-Zygmund spaces $L^{(p,q, \alpha)}(\Omega, \rM)$ further extend the Lorentz spaces. They are associated with the function norm defined, for $p\in [1, \infty)$, $q\in [1, \infty]$, and $r \in \mathbb R$,   as
\begin{equation}\label{LZ}
\|f\|_{L\sp{(p,q,r)}(0,|\Omega|)}=
\left\|s\sp{\frac{1}{p}-\frac{1}{q}}(1+\log_+ (\tfrac{1}{s}))^r f^{**}(s)\right\|_{L\sp q(0,|\Omega|)}
\end{equation}
for  $f \in {\Mpl(0,|\Omega|)}$. Here, $\log_+$ stands for the positive part of $\log$.  
Replacing $f^{**}$ with $f^*$ in \eqref{LZ} yields the functional  $\|\cdot\|_{L^{p,q,r}(\Omega, \rM)}$ and the corresponding space $L^{p,q,r}(\Omega, \rM)$.  If $p\in (1,\infty)$, the new functional is equivalent (up to multiplicative constants) to $\|\cdot\|_{L^{(p,q,r)}(\Omega, \rM)}$.
\\ The
generalized Lorentz-Zygmund spaces will come into play
in a few borderline inequalities. They are built upon the functional given by
\begin{equation}\label{GLZ}
\|f\|_{L^{p,q,r,\varrho}(0,|\Omega|)}=
\left\|s\sp{\frac{1}{p}-\frac 1q}(1+\log_+ (\tfrac{1}{s}))^{r} (1+\log_+ (1 + \log_+ \tfrac{1}{s})))^{\varrho}f^{*}(s)\right\|_{L\sp q(0,|\Omega|)}
\end{equation}
for  $f \in {\Mpl(0,|\Omega|)}$, where $p,q,r$ are as above and $\varrho \in \mathbb R$.
%
%Like for the plain Lorentz spaces, if $p\in (1, \infty)$, replacing $f^{**}$ with $f^*$
%in \eqref{LZ} results in the functional
%\begin{equation}\label{LZ*}
%\left\|s\sp{\frac{1}{p}-\frac{1}{q}}(1+\log_+ (\tfrac{1}{s}))^r f^{*}(s)\right\|_{L\sp q(0,|\Omega|)}.
%\end{equation}
%The space $L\sp{p,q,r}(\Omega, \rM)$. The 
\\
The  Orlicz spaces extend the family of Lebesgue spaces in a different direction. They are defined
via Young
functions. A \textit{Young function} is a left-continuous convex function  from  $[0, \infty )$ into  $[0, \infty ]$ that vanishes
at $0$ and is not constant in $(0, \infty)$. A Young function $A$ takes the form
\begin{equation}\label{A}
A(t) = \int _0^t a(s)\, ds \quad \quad \textup{for}\ t\in[0,\infty),
\end{equation}
 where $a: [0, \infty )\to
[0, \infty ]$  is a non-decreasing, left-continuous function, which is neither identically equal to $0$, nor to infinity. 
\\
Two Young functions $A$ and $B$ are called equivalent   globally/near infinity/near zero if there exists a positive constant $c$ such that
\begin{align}
    \label{2024-210}
    A(t/c) \leq B(t) \leq A(ct)
\end{align}
for $t\geq 0$/for $t\geq t_0$ for some $t_0>0$/for $0\leq t\leq t_0$ for some $t_0>0$, respectively. The  equivalence between $A$ and $B$ in the sense of \eqref{2024-210} will be denoted as
\begin{align}
\label{2024-211} 
A \simeq B.
\end{align}
For Young functions $A$ and $B$, the relation \eqref{2024-211} implies  that $A \approx B$, but the converse is not true.
\\ The \textit{Orlicz space}
$L\sp A(\Omega, \rM)$ is  defined through the rearrangement-invariant 
\textit{Luxemburg function norm} given
by
\begin{equation}\label{lux}
\|f\|_{L^A(0,|\Omega|)}= \inf \left\{ \lambda >0 :  \int_{0}^{|\Omega|}A \left(
\frac{f(t)}{\lambda} \right) dt \leq 1 \right\}
\end{equation}
for  $f \in {\Mpl(0,|\Omega)}$. For some explicit choices of the function $A$ we shall also employ 
the alternative notation $A(L) 
(\Omega, \rM)$.
%The equi-measurability of $f$ and $f^*$ ensures that  $\|\cdot \|_{L^A (0,L)}$ is actually a rearrangement-invariant function norm.
%\note[inline]{Lubo\v s: Is it clear that with our definition of a Young function the corresponding space is an r.i. space?}
%Lebesgue function norms are easily recovered via suitable choices of the function $A$.
%In particular,
%$L^A (0,1)= L^p (0,1)$ if $A(t)= t^p$ for some $p \in [1, \infty )$,
%and $L^A (0,1)= L^\infty (0,1)$ if $A(t)=0$ for $t\in [0, 1]$ and
%$A(t) = \infty$ for $t>1$.
\\
The  norms $\|\cdot \|_{L^A
(\Omega, \rM)}$ and $\|\cdot \|_{L^B (\Omega, \rM)}$ are equivalent if and only if either $|\Omega|<\infty$ and
$A$ and $B$ are equivalent near infinity, or $|\Omega|=\infty$ and $A$ and $B$ are equivalent globally.
\\ The Lebesgue spaces $L^p(\Omega, \rM)$ are recovered for $A(t)=t^p$ if $p \in [1, \infty)$ and $A(t)=\chi_{(1, \infty)}\infty$ if $p=\infty$.  The Zygmund spaces and the exponential type spaces are further classical examples of Orlicz spaces. The Zygmund spaces $L^p(\log L)^r(\Omega, \rM)$ are built on Young functions of the form $A(t)= t^p(\log (c+t))^r$, where either $p>1$ and $r \in \mathbb R$, or $p=1$ and $r\geq 0$, and $c$ is sufficiently large for $A$ to be convex. If $|\Omega|<\infty$,  one has that  \begin{align}
    \label{eqZYg}
    \|f\|_{L^p(\log L)^r(0, |\Omega|)} \approx \big\|(1+\log_+ (\tfrac{1}{s}))^\frac{r}{p}f^*(s)\big\|_{L^p(0, |\Omega|)}
\end{align}
for $f \in \Mpl (0, |\Omega|)$, up to multiplicative constants independent of $f$ -- see \cite[Lemma 6.12, Chapter 4]{BennettSharpley}. Hence, if $p\in (1,\infty)$, then $L^p(\log L)^r(\Omega, \rM)= L^{p,p, \frac rp}(\Omega, \rM)$, up to equivalent norms.
The exponential spaces $\exp L^r (\Omega, \rM)$, for $r>0$, are associated with Young functions $A(t)$ which are equivalent to $e^{t^r}-1$ near infinity.

%\\ The \textit{Young conjugate} of $A$ is  denoted by  $\widetilde A$.
%and defined as
%$$
%\tilde{A}(t)=\sup\{st - A(s): ~s \geq 0\} \quad \text{for $t\in [0,\infty)$}.
%$$
%The associate function norm  of   $\|\cdot \|_{L^A
%(0,1)}$ is equivalent to $\|\cdot \|_{L^{\widetilde A}
%(0,1)}$, up to absolute multiplicative constants.
%\todo[inline]{Lenka: This is needed in the proof of Theorem~\ref{EX:trace_orlicz-lorentz}.}

\par  The
class of Orlicz-Lorentz spaces embraces diverse instances of Orlicz,  Lorentz, and Lorentz-Zygmund spaces. A specific 
family of Orlicz-Lorentz spaces, which has a role in our applications, is defined as follows.
Given  a Young function $A$ and a number $q\in \R$, we denote by
 $L(A,q)(\Omega, \rM)$ the \emph{Orlicz-Lorentz space}  defined in terms of the functional
\begin{equation}\label{sep35}
	\|f\|_{L(A, q)(0,|\Omega|)}
		= \big\|r^{-\frac{1}{q}}f^{*}(r)\big\|_{L^A(0,|\Omega|)}
\end{equation}
for $f\in \Mpl (0,|\Omega|)$.
 Suitable assumptions on $A$ and $q$ ensure that this functional is  a function norm. This is guaranteed, for example, if  $q>1$ and
\begin{equation}\label{sep36}
\int^\infty \frac{A(t)}{t^{1+q}}\, dt < \infty\,,
\end{equation}
see \cite[Proposition 2.1]{cianchi_ibero}.

\section{Main results}\label{main}

Our  criterion for  Riesz potential inequalities  \eqref{may6} under co-canceling  differential conditions   is stated in 
 Theorem \ref{sobolev} below.
The notion of co-canceling differential operator from \cite[Definition 1.3]{VS3} reads as follows.

\begin{definitionalph}[{\bf Co-canceling operator}]\label{co-canc}
{\rm Let $n, m \geq 2$ and $l \geq 1$. A linear homogeneous $k$-th order constant coefficient  differential operator $\mathcal L(D)$ mapping $\mathbb{R}^m$-valued functions to $\mathbb{R}^l$-valued functions
  is said to be co-canceling if there exist linear operators  $L_\beta: \rM \to \mathbb R^l$, with $\beta \in \mathbb N^n$, such that
\begin{align}\label{operator}
    \mathcal L(D)F = \sum_{\beta \in \mathbb N^n
,\, |\beta|=k}L_\beta(\partial ^\beta F)
\end{align}
for $F \in C^\infty_c(\mathbb{R}^n,\mathbb{R}^m)$, and 
\begin{align*}
\bigcap_{\xi\in\mathbb{R}^n \setminus \{0\}}\ker \mathcal L(\xi)=\{0\},
\end{align*}
where   $\mathcal L(\xi)$ denotes the symbol map of $\mathcal L(D)$ in terms of Fourier transforms.}
\end{definitionalph}

Besides the standard first order divergence and the curl operators, the higher order divergence operator 
 ${\rm div}_k$ is another classical instance of a co-canceling operator. As hinted in Section \ref{intro}, it has a critical role in our approach and is defined as follows.

Let $n\geq 2$ and $k \in\N$. %the $k$-th order divergence %operator acting on functions on $\rn$ which take values in $\R^{n^k}$ will be denoted by 
 %${\operatorname*{div}_k}$.  
 For $F \in C^\infty(\mathbb{R}^n,\mathbb{R}^{n^k})$, we set
\begin{align}\label{smooth_div_k}
{\operatorname*{div_k}} F = \sum_{\beta \in \mathbb N^n, |\beta|=k} \partial^\beta F_\beta.
\end{align} 
 For $F\in L^1_{\rm loc}(\rn, \R^{n^k})$,  the  equality $\operatorname*{div}_kF=0$ has to be interpreted in the sense of distributions, namely:
\begin{align}\label{div_free_higherorder}
\int_{\mathbb{R}^n} F \cdot \nabla^k \varphi\;dx =0
\end{align}
for every $\varphi \in C^\infty_c(\rn)$.  \\
The   definitions above differ slightly from the convention of Van Schaftingen in \cite{VS3}, a point we now clarify.  The subspace of symmetric functions on $\rn$ with values in $\mathbb{R}^{n^k}$ has dimension
\begin{align}
    \label{N}
    N= {n+k-1 \choose k}.
\end{align}
For smooth functions on $\R^n$ with values in $\R^N$, the formula \eqref{smooth_div_k} is Van Schaftingen's definition of ${\operatorname*{div}_k}$.    With an abuse of notation we utilize this symbol to denote both of these differentiations. 
To mediate between the two, one identifies
%\footnote{If one prefers to lift $F$ to a symmetric tensor, one needs to introduce combinatorial constants in the formula that follows.} 
a function $F$ with values in $\R^N$ and a tensor $\overline{F}$ with values in $\R^{n^k}$ that has $N$ non-zero components:
\begin{align*}
{\operatorname*{div_k}} F = {\operatorname*{div_k}}  \overline{F}.
\end{align*}
Alternatively, if one prefers to lift $F$ to a symmetric tensor, one needs to introduce combinatorial constants in the formula above.
\\
According to the notation introduced above,  $X_{\operatorname*{div}_k}(\rn, \om)$ denotes the subspace of functions in a rearrangement-invariant space $X(\rn, \om)$ which satisfy $\operatorname*{div}_kF=0$ in the sense of distributions.

\begin{theorem} [{\bf Riesz potential estimates in $\rn$}]  \label{sobolev}  Let $n, m, k \in \N$, with $m,n \geq 2$,   and $\alpha \in (0,n)$. Let  $\|\cdot\|_{X(0,\infty)}$ and $\|\cdot\|_{Y(0,\infty)}$ be rearrangement-invariant function norms and   let $\mathcal L$ be any linear homogeneous   $k$-th order  co-canceling operator. Assume that there exists a constant $c_1$ such that
\begin{equation}\label{bound1}
\bigg\|\int_s^\infty r^{-1+\frac \alpha n}f(r)\, dr \bigg\|_{Y(0,\infty)} \leq c_1 \|f\|_{X(0, \infty)}
\end{equation}
for every $f \in X(0,\infty)$. 
Then, there exists a constant $c_2=c_2(c_1, \alpha, \mathcal L)$ such that
\begin{equation}\label{bound2}
\|I_\alpha F\|_{Y(\mathbb R^n, \mathbb R^m)} \leq c_2 \|F\|_{X(\rn, \mathbb R^m)}
\end{equation}
for every  $F \in X_{\mathcal L}(\rn, \mathbb R^m)$.
\end{theorem}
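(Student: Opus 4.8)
The plan is to reduce the case of a general co-canceling operator $\mathcal L$ to the case of the higher-order divergence operator $\operatorname*{div}_k$, and then to handle the divergence case by interpolation together with the rearrangement estimate furnished by Theorem \ref{K-CZ}. First I would invoke Lemma \ref{reductionk} (and the underlying result of \cite{HRS}): any $F \in X_{\mathcal L}(\rn,\mathbb R^m)$ can be written as $F = \operatorname*{div}_{k'} G$ for some $G$ in an appropriate rearrangement-invariant space, so that, after absorbing the extra differentiation into a shift of the order of the Riesz potential, one is left to prove \eqref{bound2} when $\mathcal L = \operatorname*{div}_k$ and $F \in X_{\operatorname*{div}_k}(\rn,\mathbb R^m)$; alternatively the reduction is set up so that it suffices to prove the estimate for divergence-free fields directly with the given $\alpha$. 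Either way, the heart of the matter is the divergence-constrained case.

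For that case, the key input is Theorem \ref{K-CZ}, which provides a pointwise rearrangement inequality of the form
\begin{equation*}
(I_\alpha F)^*(s) \lesssim \int_s^\infty r^{-1+\frac\alpha n} F^{**}\!\big(c r\big)\, dr + (\text{lower-order term})
\end{equation*}
for $F \in X_{\operatorname*{div}_k}(\rn,\mathbb R^m)$, the crucial point being that the $F^{**}$ (equivalently, an integrated average of $F^*$) appears in place of what, for unconstrained $F$, would have to involve a supremum or a weak-type truncation near $s$. Given such an estimate, I would apply the rearrangement-invariant function norm $\|\cdot\|_{Y(0,\infty)}$ to both sides; by the lattice property (P2) and Hardy's lemma \eqref{hardy}, together with the boundedness of the averaging operator $f \mapsto f^{**}$ and of dilations on rearrangement-invariant spaces, the right-hand side is controlled by $\big\|\int_s^\infty r^{-1+\frac\alpha n} f(r)\, dr\big\|_{Y(0,\infty)}$ with $f = F^*$, which by hypothesis \eqref{bound1} is at most $c_1\|F^*\|_{X(0,\infty)} = c_1\|F\|_{X(\rn,\mathbb R^m)}$. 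A standard mollification argument using \eqref{nov1}–\eqref{nov2} handles the reduction to smooth $F$ needed to legitimately apply Theorem \ref{K-CZ}, and lets one pass to the limit since the left-hand side of \eqref{bound2} is lower semicontinuous along the mollified sequence.

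\textbf{Main obstacle.} The delicate point is obtaining the rearrangement estimate of Theorem \ref{K-CZ} itself, i.e. seeing why the divergence constraint upgrades the unconstrained bound $(I_\alpha F)^*(s)\lesssim \int_s^\infty r^{-1+\frac\alpha n}F^*(r)\,dr + s^{\frac\alpha n}F^{**}(s)\cdot s^{-1}\!\!\int$\ldots (where the second, ``near-diagonal'' term is the one responsible for the loss at $L^1$) into a bound with only the good Hardy-type term. This is precisely where the $K$-functional identity for the couple $(L^1_{\operatorname*{div}_k}, L^{p,q}_{\operatorname*{div}_k})$ — built on the Bourgain–Brezis/Pisier-type estimates for the Helmholtz-type projection — enters: one interpolates the endpoint inequality \eqref{false1'} against the trivial $L^{\frac n\alpha,1}\to L^\infty$ bound, using the reiteration theorem and \eqref{K-v1} to reduce vector-valued $K$-functionals to scalar ones. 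Since the excerpt allows us to assume Theorem \ref{K-CZ}, in the present proof this difficulty is black-boxed, and what remains is the essentially routine, though careful, passage from the pointwise rearrangement inequality to the norm inequality via \eqref{hardy}, (P2), and the hypothesis \eqref{bound1}, plus the mollification limiting argument and the reduction of Lemma \ref{reductionk}.
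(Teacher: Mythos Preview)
Your overall architecture---reduce a general $\mathcal L$ to $\operatorname*{div}_k$ via Lemma~\ref{reductionk}, then use Theorem~\ref{K-CZ} for the divergence-constrained case---matches the paper. But two concrete steps are off. First, Theorem~\ref{K-CZ} does \emph{not} give a pointwise bound on $(I_\alpha F)^*(s)$; what it gives is the weighted integral inequality
\[
\int_0^t s^{-\alpha/n}(I_\alpha F)^*(s)\,ds \;\le\; c\int_0^t s^{-\alpha/n}\int_s^\infty r^{-1+\alpha/n}F^*(r)\,dr\,ds,
\]
with $F^*$, not $F^{**}$, on the right. Second---and this is the real gap---your plan to pass to $\|\cdot\|_Y$ and then invoke ``boundedness of $f\mapsto f^{**}$'' on $X$ fails exactly where the theorem is interesting: the maximal operator $f\mapsto f^{**}$ is \emph{not} bounded on $L^1$ or on the borderline spaces near $L^1$ that motivate the whole paper, so $\|F^{**}\|_{X}\lesssim\|F\|_{X}$ is unavailable. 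The paper avoids this by not seeking a pointwise bound at all: it feeds the integrated inequality above into Theorem~\ref{KermanPick} (a result of Kerman--Pick), which says that an inequality of the form $\int_0^t s^{-\alpha/n}g^*\le c\int_0^t s^{-\alpha/n}\int_{s}^\infty r^{-1+\alpha/n}f^*\,dr\,ds$ together with the Hardy hypothesis~\eqref{bound1} yields $\|g\|_Y\lesssim\|f\|_X$. That step is the nontrivial ``de-averaging'' that replaces your appeal to $f\mapsto f^{**}$, and it is what you are missing.

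Two smaller points: the reduction in Lemma~\ref{reductionk} is purely linear-algebraic (one writes $F=\sum K_\beta L_\beta F$ with each row of $L_\beta F$ being $\operatorname*{div}_k$-free), not a representation $F=\operatorname*{div}_{k'}G$ with a shift in $\alpha$; and no mollification is needed, since the paper first checks via~\eqref{may8} and H\"older that any $F\in X(\rn,\mathbb R^m)$ already lies in $L^1+L^{n/\alpha,1}$, which is the class where Theorem~\ref{K-CZ} applies directly.
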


%Our assertion about the connections between the inequalities \eqref{may5} and \eqref{may6} rests upon the fact that the inequality \eqref{bound1}
%is  necessary and sufficient for \eqref{may5} to hold in all  circumstances when the space $V^{\alpha}X(\rn)$
%is well-defined.

As a consequence of Theorem \ref{sobolev}, one can deduce that the inequality \eqref{may6} holds if $Y(\mathbb R^n, \mathbb R^m)$ agrees with the optimal target space  in the Sobolev inequality \eqref{may5}. The vectorial version $X_\alpha(\mathbb R^n, \mathbb R^m)$ of the optimal space in question is defined via its associate   space $X_\alpha'(\mathbb R^n, \mathbb R^m)$.  The latter space is built upon the function norm    obeying
\begin{align}
    \label{may7}
    \|f\|_{X_
    \alpha'(0, \infty)}= \|s^{\frac \alpha n} f^{**}(s)\|_{X'(0,\infty)}
\end{align}
for $f\in \Mpl (0, \infty)$. Here, $\|\,\cdot \,\|_{X'(0,\infty)}$ denotes the function norm which defines the   associate space of $X(\mathbb R^n, \mathbb R^m)$. The right-hand side of \eqref{may7} is a rearrangement-invariant function norm provided that
\begin{equation}\label{may8}
    \|(1+r)^{-1+\frac{\alpha}{n}}\|_{X'(0,\infty)}< \infty,
\end{equation}
see \cite[Theorem 4.4]{EMMP}.   Notice that the condition \eqref{may8} is necessary for an inequality of the form \eqref{bound1} to hold whatever the rearrangement-invariant space $Y(\rn, \rM)$ is. This follows analogously to \cite[Equation (2.2)]{EGP}.
\\
Thanks to \cite[Theorem~4.4]{EMMP}, the inequality \eqref{bound1} holds with $Y(0,\infty)= X_\alpha (0, \infty)$. Hence, the next result can be deduced from an application of Theorem \ref{sobolev}.

\begin{theorem}[{\bf Target space for Riesz potentials  in $\rn$}] \label{sobolev-opt}    Assume that $n,m,k, \alpha$ and $\mathcal L$ are as in Theorem \ref{sobolev}.
Let $\|\cdot\|_{X(0,\infty)}$ be a rearrangement-invariant function norm satisfying \eqref{may8}.
Then, there exists a constant $c=c(\alpha, \mathcal L)$ such that
\begin{equation}\label{bound-opt}
\|I_\alpha F\|_{X_\alpha(\mathbb R^n, \mathbb R^m)} \leq c \|F\|_{X(\rn, \mathbb R^m)}
\end{equation}
for every  $F \in X_{\mathcal L}(\rn, \mathbb R^m)$.
\end{theorem}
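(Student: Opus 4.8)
\textbf{Proof proposal for Theorem \ref{sobolev-opt}.}

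The plan is to derive Theorem \ref{sobolev-opt} as a direct specialization of Theorem \ref{sobolev} by checking that, for the chosen target $Y(0,\infty)=X_\alpha(0,\infty)$, the one-dimensional Hardy-type inequality \eqref{bound1} is satisfied. Since Theorem \ref{sobolev} already reduces the vector-valued constrained Riesz potential estimate to the scalar inequality \eqref{bound1}, the only thing to verify is that the operator $f\mapsto \int_s^\infty r^{-1+\frac\alpha n}f(r)\,dr$ is bounded from $X(0,\infty)$ into $X_\alpha(0,\infty)$, where the latter is the rearrangement-invariant function norm whose associate norm is given by \eqref{may7}.

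First I would recall that the hypothesis \eqref{may8}, namely $\|(1+r)^{-1+\frac\alpha n}\|_{X'(0,\infty)}<\infty$, is precisely the condition under which the functional on the right-hand side of \eqref{may7} is a genuine rearrangement-invariant function norm; this is \cite[Theorem 4.4]{EMMP}. Moreover, the same reference (or the duality characterization of boundedness for the Hardy-type operator $f \mapsto \int_s^\infty r^{-1+\frac\alpha n} f(r)\, dr$, whose adjoint with respect to the $L^2(0,\infty)$ pairing is, up to the weight, the averaging operator $g\mapsto s^{\frac\alpha n}g^{**}(s)$-type operator appearing in \eqref{may7}) shows that \eqref{bound1} holds with $c_1$ depending only on $\alpha$ and $n$ when $Y(0,\infty)=X_\alpha(0,\infty)$. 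Concretely, one uses that for nonnegative $f$ and $g$,
\begin{equation*}
\int_0^\infty g(s)\int_s^\infty r^{-1+\frac\alpha n}f(r)\,dr\,ds = \int_0^\infty f(r)\, r^{-1+\frac\alpha n}\int_0^r g(s)\,ds\,dr = \int_0^\infty f(r)\, r^{\frac\alpha n} g^{**}(r)\,dr,
\end{equation*}
so that, taking the supremum over $g$ with $\|g\|_{X'(0,\infty)}\le 1$ and invoking the definition \eqref{may7} of $\|\cdot\|_{X_\alpha'(0,\infty)}$ together with the principle of duality for rearrangement-invariant norms, one gets $\big\|\int_s^\infty r^{-1+\frac\alpha n}f(r)\,dr\big\|_{X_\alpha(0,\infty)} \lesssim \|f\|_{X(0,\infty)}$. (A minor point to be careful about: the supremum is a priori over all nonnegative $g$, not just rearranged ones, but monotonicity of $g^{**}$ under rearrangement and the rearrangement-invariance of $\|\cdot\|_{X'(0,\infty)}$ let one restrict to nonincreasing $g$, for which $g=g^*$.)

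With \eqref{bound1} verified for this particular $Y$, Theorem \ref{sobolev} applies verbatim and yields a constant $c=c(\alpha,\mathcal L)$ — the dependence on $c_1$ collapses to dependence on $\alpha$ and $n$ since $c_1=c_1(\alpha,n)$ here — such that $\|I_\alpha F\|_{X_\alpha(\rn,\mathbb R^m)}\le c\|F\|_{X(\rn,\mathbb R^m)}$ for every $F\in X_{\mathcal L}(\rn,\mathbb R^m)$, which is exactly \eqref{bound-opt}. I do not anticipate a serious obstacle: the substantive work is entirely contained in Theorem \ref{sobolev} and in the already-cited \cite[Theorem 4.4]{EMMP}; the only care needed is bookkeeping around the duality argument and the well-definedness of $X_\alpha$, both of which are handled by hypothesis \eqref{may8}. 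The one place where I would slow down is making sure the associate-space construction \eqref{may7} indeed produces the \emph{optimal} (smallest) target, but for the statement of Theorem \ref{sobolev-opt} as written — which only asserts that \eqref{bound-opt} holds — optimality is not needed and can be deferred.
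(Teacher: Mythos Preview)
Your proposal is correct and follows essentially the same route as the paper: the paper simply observes that, by \cite[Theorem~4.4]{EMMP}, inequality \eqref{bound1} holds with $Y(0,\infty)=X_\alpha(0,\infty)$, and then invokes Theorem~\ref{sobolev}. Your added duality computation is a reasonable elaboration of why \eqref{bound1} holds for this target, but it is not needed beyond the citation.
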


%\todo[inline]{A: maybe the following results about inequalities on domains with finite measure can just be presented in a remark}

The following variant of  Theorem \ref{sobolev} holds 
for functions $F$ supported in sets with finite measure.

\begin{corollary}[{\bf Riesz potential estimates in domains}] \label{sobolev-domain}  
  Assume that $n,m,k, \alpha$ and $\mathcal L$ are as in Theorem \ref{sobolev}. Let 
$\Omega$ be a measurable set in $\rn$ such that $|\Omega|<\infty$ and let   $\|\cdot\|_{X(0,|\Omega|)}$ and $\|\cdot\|_{Y(0,|\Omega|)}$ be rearrangement-invariant function norms.
Assume that there exists a constant $c_1$ such that
\begin{equation}\label{bound1-domain}
\bigg\|\int_s^{|\Omega|} r^{-1+\frac \alpha n}f(r)\, dr \bigg\|_{Y(0,|\Omega|)} \leq c_1 \|f\|_{X(0, |\Omega|)}
\end{equation}
for every $f \in X(0,|\Omega|)$. 
Then, there exists a constant $c_2=c_2(c_1, \alpha, \mathcal L)$ such that
\begin{equation}\label{bound2-domain}
\|I_\alpha F\|_{Y(\Omega, \mathbb R^m)} \leq c_2 \|F\|_{X(\Omega, \mathbb R^m)}
\end{equation}
for every  $F \in X_{\mathcal L}(\rn, \mathbb R^m)$ vanishing outside $\Omega$.
\end{corollary}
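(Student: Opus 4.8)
The plan is to reproduce the scheme behind Theorem~\ref{sobolev} directly on $\Omega$; set $L=|\Omega|$. The point is that a function $F$ supported in $\Omega$ has decreasing rearrangement $F^*$ supported in $(0,L)$, so that every integral over $(s,\infty)$ that occurs collapses to an integral over $(s,L)$ — precisely the form appearing in the one-dimensional hypothesis \eqref{bound1-domain}. I would \emph{not} try to deduce the statement by extending $X(\Omega,\rM)$ and $Y(\Omega,\rM)$ to $\rn$ via \eqref{sep25} and quoting Theorem~\ref{sobolev}, because the extended function norms generally fail the Hardy inequality \eqref{bound1} on the whole of $(0,\infty)$: their tails are too small to control $\int_s^\infty r^{-1+\alpha/n}\,dr=\infty$.

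First I would treat the model case $\mathcal L=\operatorname*{div}_k$. Since $|\Omega|<\infty$, the embedding $X(\Omega,\rM)\to L^1(\Omega,\rM)$ from \eqref{l1linf} shows that $F\in L^1(\rn,\rM)$, hence $F\in L^1_{\operatorname*{div}_k}(\rn,\rM)$ and Theorem~\ref{K-CZ} applies. It provides a rearrangement estimate of the type
\begin{equation*}
(I_\alpha F)^*(s)\le c\int_s^\infty r^{-1+\frac{\alpha}{n}}F^*(r)\,dr\qquad\text{for }s>0,
\end{equation*}
with $c=c(n,\alpha,k)$ (or, in its precise two-term form, an estimate of this kind after an application of Hardy's lemma \eqref{hardy}). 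Since $F$ vanishes outside $\Omega$, we have $F^*(r)=0$ for $r\ge L$, so the right-hand side equals $c\int_s^L r^{-1+\frac{\alpha}{n}}F^*(r)\,dr$ for every $s\in(0,L)$. Moreover, the restriction of $I_\alpha F$ to $\Omega$ can only decrease its distribution function, whence $\big((I_\alpha F)|_\Omega\big)^*(s)\le (I_\alpha F)^*(s)$ for $s\in(0,L)$. Combining these facts with property (P2) and with the definition \eqref{norm} of the norm in $Y(\Omega,\rM)$ gives
\begin{equation*}
\|I_\alpha F\|_{Y(\Omega,\rM)}=\Big\|\big((I_\alpha F)|_\Omega\big)^*\Big\|_{Y(0,L)}\le c\,\Big\|\int_s^L r^{-1+\frac{\alpha}{n}}F^*(r)\,dr\Big\|_{Y(0,L)}\le c\,c_1\,\|F^*\|_{X(0,L)}=c\,c_1\,\|F\|_{X(\Omega,\rM)},
\end{equation*}
the middle inequality being \eqref{bound1-domain} applied to $f=F^*$, which is legitimate because $F^*\in X(0,L)$ with $\|F^*\|_{X(0,L)}=\|F\|_{X(\Omega,\rM)}<\infty$. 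This is \eqref{bound2-domain} when $\mathcal L=\operatorname*{div}_k$.

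The reduction from $\operatorname*{div}_k$ to a general $k$-th order co-canceling operator $\mathcal L$ would then be carried out exactly as in the proof of Theorem~\ref{sobolev}, via Lemma~\ref{reductionk}. I expect this last step to be the only delicate one, since a co-canceling constraint does not localise: resolving $\mathcal L F=0$ as a $k$-th order divergence produces a nonlocal potential, so a $\operatorname*{div}_k$-free field attached to $F$ need not vanish outside $\Omega$, nor have rearrangement controlled by $F^*$. The task is therefore to arrange the reduction — or to restate Lemma~\ref{reductionk} for sets of finite measure — so that only the rearrangement of $F$ itself enters the final bound; once that is secured, the truncation $F^*(r)=0$ for $r\ge L$ again forces all the relevant one-dimensional integrals to terminate at $L$, and \eqref{bound1-domain} closes the proof precisely as \eqref{bound1} does in Theorem~\ref{sobolev}. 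The remaining verifications ($F\in L^1$, $F^*\in X(0,L)$, monotonicity of decreasing rearrangements under restriction, and the passage from any two-term form of Theorem~\ref{K-CZ} to the displayed one by means of Hardy's lemma \eqref{hardy}) are routine.
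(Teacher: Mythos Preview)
Your central step contains a genuine gap. Theorem~\ref{K-CZ} does \emph{not} deliver the pointwise estimate $(I_\alpha F)^*(s)\le c\int_s^\infty r^{-1+\alpha/n}F^*(r)\,dr$; it only gives the weighted integral inequality \eqref{nov100}. Hardy's lemma~\eqref{hardy} cannot convert \eqref{nov100} into such a pointwise bound: from $\int_0^t s^{-\alpha/n}g^*(s)\,ds\le c\int_0^t s^{-\alpha/n}h(s)\,ds$ with $h$ decreasing one cannot infer $g^*\le Ch$ (take $g^*=\chi_{(0,1)}$, $h=2\chi_{(0,1/2)}$). The passage from \eqref{nov100} to a norm inequality is accomplished by Theorem~\ref{KermanPick}, whose hypothesis is precisely the Hardy inequality \eqref{bound1}. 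Supplying that hypothesis forces you back to the extension device you rejected: this is exactly the paper's route. It introduces $X^e,Y^e$ via \eqref{sep25}, verifies the Hardy inequality \eqref{sep27} only for $f$ supported in $[0,|\Omega|]$ (chain \eqref{sep28}), and then invokes Theorem~\ref{sobolev}. Your observation that \eqref{bound1} fails for $X^e$ on all of $(0,\infty)$ is correct but harmless, because in the proof of Theorem~\ref{sobolev} the inequality \eqref{bound1} enters only through Theorem~\ref{KermanPick} applied with $f=F^*$, and here $F^*$ is supported in $[0,|\Omega|]$; so \eqref{sep27} suffices. The paper's phrase ``an application of Theorem~\ref{sobolev}'' should thus be read as ``an inspection of its proof''.

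Your worry about Lemma~\ref{reductionk} is also misplaced. The maps $L_\beta$ appearing there are \emph{pointwise} linear maps $\R^m\to\R^l$, not nonlocal potentials: one has $\mathcal L(D)F=\sum_{|\beta|=k}\partial^\beta(L_\beta F)$ with constant-coefficient $L_\beta$. Hence each $L_\beta F$ vanishes outside $\Omega$ whenever $F$ does, $|L_\beta F|\le c|F|$ pointwise, and $(L_\beta F)^*$ is supported in $[0,|\Omega|]$ with $\|(L_\beta F)_i\|_{X(\Omega,\R^N)}\lesssim\|F\|_{X(\Omega,\rM)}$. The reduction is entirely local and introduces no difficulty of the sort you anticipate.
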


Accordingly, a counterpart of Theorem \ref{sobolev-opt} tells us that the inequality \eqref{bound2-domain} holds with $Y(\Omega, \rM)=X_\alpha (\Omega, \R^m)$, where $X_\alpha (\Omega, \R^m)$ is the rearrangement invariant space whose associate norm is defined  by the function norm
\begin{align}
    \label{may7-domain}
    \|f\|_{X_
    \alpha'(0, |\Omega|)}= \|s^{\frac \alpha n} f^{**}(s)\|_{X'(0,|\Omega|)}
\end{align}
for $f\in \Mpl (0, |\Omega|)$.
Notice that no additional assumption like \eqref{may8} is required in this case.
\\ Since,
 by \cite[Theorems 2.1 and 2.3]{mihula}, the inequality \eqref{bound1-domain} holds with $Y(0,|\Omega|)= X_\alpha (0, |\Omega|)$, the following result is a consequence of Corollary \ref{sobolev-domain}.

\begin{corollary}[{\bf Target space for Riesz potentials  in domains}] \label{sobolev-opt-domain}  
  Assume that $n,m,k, \alpha$, $\mathcal L$, $\Omega$,  and $\|\cdot\|_{X(0,\infty)}$ are as in Corollary \ref{sobolev-domain}. 
Then, there exists a constant $c=c(\alpha, \mathcal L, |\Omega|)$ such that
\begin{equation}\label{bound-opt-domain}
\|I_\alpha F\|_{X_\alpha(\Omega, \mathbb R^m)} \leq c \|F\|_{X(\Omega, \mathbb R^m)}
\end{equation}
for every  $F \in X_{\mathcal L}(\rn, \mathbb R^m)$ vanishing outside $\Omega$.
\end{corollary}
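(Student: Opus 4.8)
The plan is to reduce Corollary \ref{sobolev-opt-domain} to Corollary \ref{sobolev-domain} by supplying the right one-dimensional Hardy inequality as input, exactly as the corresponding unbounded-domain result was deduced from Theorem \ref{sobolev} in the excerpt. Indeed, once one knows that
\begin{equation*}
\bigg\|\int_s^{|\Omega|} r^{-1+\frac{\alpha}{n}} f(r)\, dr\bigg\|_{X_\alpha(0,|\Omega|)} \leq c_1 \|f\|_{X(0,|\Omega|)}
\end{equation*}
holds for every $f\in X(0,|\Omega|)$, an application of Corollary \ref{sobolev-domain} with $Y(0,|\Omega|)=X_\alpha(0,|\Omega|)$ immediately yields \eqref{bound-opt-domain} for every $F\in X_{\mathcal L}(\rn,\mathbb R^m)$ vanishing outside $\Omega$, with a constant depending only on $\alpha$, $\mathcal L$, and $|\Omega|$ (the latter through $c_1$).

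The only thing to check, therefore, is that the one-dimensional Hardy inequality above does hold, and this is precisely the content of \cite[Theorems 2.1 and 2.3]{mihula}: the operator $f\mapsto \int_s^{|\Omega|} r^{-1+\alpha/n} f(r)\, dr$ is bounded from $X(0,|\Omega|)$ into $X_\alpha(0,|\Omega|)$, where $X_\alpha$ is the optimal rearrangement-invariant range partner defined via the associate function norm in \eqref{may7-domain}. I would first recall that on a set of finite measure the functional
\begin{equation*}
\|f\|_{X_\alpha'(0,|\Omega|)}=\|s^{\alpha/n} f^{**}(s)\|_{X'(0,|\Omega|)}
\end{equation*}
is automatically a rearrangement-invariant function norm — here, in contrast with the case $|\Omega|=\infty$, no hypothesis of the type \eqref{may8} is needed, since $\|(1+r)^{-1+\alpha/n}\|_{X'(0,|\Omega|)}<\infty$ is guaranteed by \eqref{l1linf} together with $|\Omega|<\infty$. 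Then I would quote the optimality/boundedness statement from \cite{mihula} verbatim for the relevant Hardy-type operator, noting that $r^{-1+\alpha/n}$ is exactly the kernel appearing there, so that $X_\alpha(0,|\Omega|)$ is both an admissible and the optimal target.

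There is essentially no genuine obstacle: all the work has been done in Corollary \ref{sobolev-domain} (which supplies the functional-analytic reduction from the co-canceling Riesz potential inequality to a scalar one-dimensional inequality) and in \cite{mihula} (which supplies that one-dimensional inequality with the sharp target). The one point requiring a line of care is the bookkeeping of the constant: $c_1$ in \eqref{bound1-domain} produced by \cite{mihula} depends on $|\Omega|$ and on $\alpha$ only, whence the final constant $c=c(\alpha,\mathcal L,|\Omega|)$ has exactly the claimed dependence, with no dependence on the particular function norm $\|\cdot\|_{X(0,|\Omega|)}$. I would close by remarking, as the excerpt already does, that when $\alpha=k\in\N$ the target $X_\alpha(\Omega,\mathbb R^m)$ coincides with the optimal rearrangement-invariant target in the $k$-th order Sobolev inequality on $\Omega$ with domain norm $X$, so that \eqref{bound-opt-domain} is sharp within the scale of rearrangement-invariant target spaces.
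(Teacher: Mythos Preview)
Your proposal is correct and follows essentially the same approach as the paper: the paper also deduces Corollary \ref{sobolev-opt-domain} directly from Corollary \ref{sobolev-domain} by invoking \cite[Theorems 2.1 and 2.3]{mihula} to supply the one-dimensional Hardy inequality \eqref{bound1-domain} with $Y(0,|\Omega|)=X_\alpha(0,|\Omega|)$. Your additional remarks about why no condition of type \eqref{may8} is needed on a set of finite measure and about the constant's independence from $X$ are consistent with the paper's treatment.
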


\begin{remark}
    \label{riesz-classical}
  {\rm If the constraint \eqref{constr}  is dropped, namely all functions $F \in X(\rn, \mathbb R^m)$ are admitted,
   the inequality \eqref{bound2} is known to hold if and only if the assumption \eqref{bound1} is coupled with the additional inequality for the dual Hardy type operator
\begin{equation}\label{bound1dual}
\bigg\| s^{-1+\frac \alpha n}\int_0^sf(r)\, dr \bigg\|_{Y(0,\infty)} \leq c\|f\|_{X(0, \infty)}
\end{equation}
for some constant $c$ and every $f \in X(0,\infty)$. The proof of this assertion follows exactly along the same lines as that of \cite[Theorem 2]{Cianchi-JLMS}, dealing with Orlicz norms. 
\\ A parallel property holds in connection with the inequality \eqref{bound2-domain}, whose validity for every $F\in X(\rn, \rM)$ vanishing outside $\Omega$ is equivalent to the couple of inequalities consisting of \eqref{bound1-domain} and of a counterpart of \eqref{bound1dual} with $X(0, \infty)$ and $X(0, \infty)$ replaced with $X(0,|\Omega|)$ and $Y(0,|\Omega|)$.
\iffalse

\begin{equation}\label{bound1dual}
\bigg\| s^{-1+\frac \alpha n}\int_0^sf(r)\, dr \bigg\|_{Y(0,|\Omega|)} \leq c_1 \|f\|_{X(0,|\Omega|)}
\end{equation}
for $f \in X(0,|\Omega|)$.
\fi
   }
\end{remark}

The  theorems  above can be used  to derive a  number of new inequalities in families of rearrangement-invariant spaces. They include, for instance, Zygmund spaces and, more generally, Orlicz spaces and  Lorentz-Zygmund spaces.

\par We begin with Orlicz domain and target spaces. Given 
  a~Young function $A$ such that
\begin{equation}\label{conv0}
	\int_{0}\left(\frac{t}{A(t)}\right)^{\frac{\alpha}{n-\alpha}}\,dt < \infty,
\end{equation}	
let 
 $A_{\frac{n}{\alpha}}$ be its Sobolev conjugate defined as
\begin{equation}\label{An}
A_{\frac{n}{\alpha}} (t) = A(H^{-1}(t)) \quad \text{for $t\geq 0$,}
\end{equation}
where
\begin{equation}\label{H}
H(t) = \bigg(\int _0^t \bigg(\frac \tau{A(\tau)}\bigg)^{\frac
{\alpha}{n-\alpha}} d\tau\bigg)^{\frac {n-\alpha}n} \quad \text{for $t \geq0$.}
\end{equation}
When $\alpha \in \N$, the Young function $A_{\frac{n}{\alpha}}$  defines the optimal Orlicz target space for embeddings of the $\alpha$-th order Orlicz-Sobolev space $V^\alpha L^A(\rn)$.
This is shown in  \cite{cianchi_CPDE}  for $\alpha =1$, and in \cite{cianchi_forum} for an arbitrary integer $\alpha \in (0,n)$
(see also \cite{cianchi_IUMJ} for an alternate equivalent formulation). An analogous result for fractional-order Orlicz-Sobolev spaces is established in \cite{ACPS}.
From Theorems \ref{sobolev} and \ref{sobolev-domain} one can deduce, via \cite[Inequality (2.7)]{cianchi_CPDE}, that the same target space is admissible for Riesz potential inequalities under the constraint \eqref{constr}.

\begin{theorem}[{\bf Riesz potential  inequalities in Orlicz spaces}]\label{sobolev-orlicz} 
  Let $n,m,k, \alpha$ and $\mathcal L$ be as in Theorem \ref{sobolev}.
\iffalse 
Let $n, m \in \N$, with $m,n \geq 2$, and $\alpha \in (0,n)$, and let $\mathcal L$ be any linear homogeneous first-order  co-canceling operator.
\fi
\\ (i) Assume that $A$ is a Young function fulfilling the condition \eqref{conv0}.
Then, there exists a constant $c=c(\alpha, \mathcal L)$ such that
\begin{equation}\label{bound-orlicz}
\|I_\alpha F\|_{L^{A_{\frac n\alpha}}(\mathbb R^n, \mathbb R^m)} \leq c \|F\|_{L^A(\rn, \mathbb R^m)}
\end{equation}
for   $F \in L^A_{\mathcal L}(\rn, \mathbb R^m)$.
\\ (ii) Assume that $\Omega$ is a measurable set in $\rn$ such that $|\Omega|<\infty$.   Let $A$ be a Young function and let   $A_{\frac n\alpha}$ be a Young function defined as in \eqref{An} with $A$ modified, if necessary, near $0$ in such a way that the condition \eqref{conv0} is fulfilled. Then, there exists a constant $c=c(\alpha, A, \mathcal L, |\Omega|)$ such that
\begin{equation}\label{bound-orlicz-domain}
\|I_\alpha F\|_{L^{A_{\frac n\alpha}}(\Omega, \mathbb R^m)} \leq c \|F\|_{L^A(\Omega, \mathbb R^m)}
\end{equation}
for    $F \in L^A_{\mathcal L}(\Omega, \mathbb R^m)$ vanishing outside $\Omega$.
\end{theorem}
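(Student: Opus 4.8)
The plan is to derive Theorem \ref{sobolev-orlicz} as a direct consequence of Theorem \ref{sobolev} and Corollary \ref{sobolev-domain}, by verifying that the relevant one-dimensional Hardy type inequality \eqref{bound1} (resp. \eqref{bound1-domain}) holds with the domain norm $\|\cdot\|_{X(0,\infty)} = \|\cdot\|_{L^A(0,\infty)}$ (resp. $\|\cdot\|_{L^A(0,|\Omega|)}$) and the target norm $\|\cdot\|_{Y(0,\infty)} = \|\cdot\|_{L^{A_{n/\alpha}}(0,\infty)}$ (resp. $\|\cdot\|_{L^{A_{n/\alpha}}(0,|\Omega|)}$). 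Once this Hardy inequality is in place, the conclusions \eqref{bound-orlicz} and \eqref{bound-orlicz-domain} follow immediately upon passing to the Orlicz spaces $L^A_{\mathcal L}$ and their decreasing-rearrangement representations.

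For part (i), first I would record that the condition \eqref{conv0} is precisely what makes the Sobolev conjugate $A_{n/\alpha}$ in \eqref{An}--\eqref{H} a well-defined Young function, and then invoke the one-dimensional reduction already established in the Orlicz-Sobolev literature. Concretely, \cite[Inequality (2.7)]{cianchi_CPDE} (for $\alpha=1$) together with its higher-order analog in \cite{cianchi_forum} provides exactly the bound
\begin{equation*}
\bigg\|\int_s^\infty r^{-1+\frac\alpha n}f(r)\,dr\bigg\|_{L^{A_{n/\alpha}}(0,\infty)} \le c\,\|f\|_{L^A(0,\infty)}
\end{equation*}
for every $f\in L^A(0,\infty)$, with a constant $c=c(\alpha,n)$; alternatively this can be read off from the general rearrangement-invariant optimal-target formula \eqref{may7}--\eqref{may8} once one checks that for $X=L^A$ the space $X_\alpha$ associated with \eqref{may7} is equivalent to $L^{A_{n/\alpha}}$, a computation carried out in \cite{cianchi_forum}. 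Feeding this into Theorem \ref{sobolev} with $\mathcal L$ the given co-canceling operator yields \eqref{bound-orlicz} for all $F\in L^A_{\mathcal L}(\rn,\mathbb R^m)$.

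For part (ii), the only extra point is that over a set $\Omega$ of finite measure the integrability condition \eqref{conv0} near $0$ is irrelevant, since the relevant norms only see the behavior of $A$ near infinity (as recorded in the equivalence criterion for Orlicz norms stated in Section \ref{back}); hence one may replace $A$ by a Young function equivalent near infinity for which \eqref{conv0} holds, define $A_{n/\alpha}$ from this modification, and apply the domain version of the Hardy inequality, namely \cite[Theorems 2.1 and 2.3]{mihula} or the finite-measure counterpart of \cite[Inequality (2.7)]{cianchi_CPDE}, to get
\begin{equation*}
\bigg\|\int_s^{|\Omega|} r^{-1+\frac\alpha n}f(r)\,dr\bigg\|_{L^{A_{n/\alpha}}(0,|\Omega|)} \le c\,\|f\|_{L^A(0,|\Omega|)}.
\end{equation*}
Corollary \ref{sobolev-domain} then delivers \eqref{bound-orlicz-domain}, with the constant now also depending on $|\Omega|$ and on the chosen modification of $A$.

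The routine part is purely bookkeeping: matching the representation-space norms, checking that the modification of $A$ near zero changes neither $L^A(\Omega,\mathbb R^m)$ nor $L^{A_{n/\alpha}}(\Omega,\mathbb R^m)$ up to equivalent norms, and tracking constants. The only genuine input is the equivalence between the abstract optimal Orlicz target $X_\alpha$ and the Sobolev conjugate $L^{A_{n/\alpha}}$ defined by \eqref{An}; I expect this to be the main obstacle in the sense that it is the one place where a nontrivial external result (the sharpness of $A_{n/\alpha}$ for Orlicz-Sobolev embeddings, from \cite{cianchi_CPDE,cianchi_forum}) must be quoted precisely, but it is already available in the cited literature and requires no new argument here.
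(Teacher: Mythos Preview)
Your proposal is correct and matches the paper's approach exactly: the paper does not give a separate proof of Theorem \ref{sobolev-orlicz} but states, in the sentence immediately preceding it, that it follows from Theorem \ref{sobolev} and Corollary \ref{sobolev-domain} via \cite[Inequality (2.7)]{cianchi_CPDE}, which is precisely the one-dimensional Hardy inequality you invoke. Your write-up is simply a fuller version of that one-line deduction, with the domain case and the modification of $A$ near zero spelled out as the paper intends.
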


\begin{remark}
    \label{linfinity}
   {\rm  If $A$ grows  so fast   near infinity that
\begin{equation}\label{convinf}
	\int^\infty\left(\frac{t}{A(t)}\right)^{\frac{\alpha}{n-\alpha}}\,dt < \infty,
\end{equation}	
then $A_{\frac n\alpha}(t)=\infty$ for large $t$. Hence, $L^{A_{\frac n\alpha}}(\rn, \mathbb R^m)\to L^\infty (\rn, \mathbb R^m)$ and the inequality \eqref{bound-orlicz}
implies that
\begin{equation}\label{bound-orlicz-inf}
\|I_\alpha F\|_{L^{\infty}(\mathbb R^n, \mathbb R^m)} \leq c \|F\|_{L^A(\rn, \mathbb R^m)}
\end{equation}
for   $F \in L^A_{\mathcal L}(\rn, \mathbb R^m)$.}
\end{remark}

\begin{example}\label{ex1}{\rm
Consider a Young function $A$ such that
\begin{equation}\label{june3}
A(t) \,\,\simeq\,\, \begin{cases} t^{p_0} (\log \frac 1t)^{r_0} & \quad \text{near zero}
\\
t^p  (\log t)^r & \quad \text{near infinity,}
\end{cases}
\end{equation}
where either $p_0>1$ and $r_0 \in \R$, or $p_0=1$ and $r_0 \leq 0$, and either $p>1$ and $r \in \R$, or $p=1$ and $r \geq 0$.  
\\  The function $A$ satisfies the assumption \eqref{conv0} if
\begin{equation}\label{june4}
\text{either $1\leq p_0< \frac n\alpha$ and $r_0$ is as above, or $p_0=\frac n\alpha$ and $r_0 > \frac n\alpha -1$.}
\end{equation}
\\ Theorem \ref{sobolev-orlicz}  tells us that the inequality \eqref{bound-orlicz} holds,  where
\begin{equation}\label{june5}
A_{\frac n\alpha}(t)\,\,  \simeq\,\, \begin{cases} t^{\frac {n{p_0}}{n-\alpha{p_0}}} (\log \frac 1t)^{\frac {nr_0}{n-\alpha{p_0}}} & \quad \text{ if $1\leq {p_0}< \frac n\alpha$ }
\\
e^{-t^{-\frac{n}{\alpha(r_0 +1)-n}}} & \quad  \text{if ${p_0}=\frac n\alpha$ and $r_0 > \frac n\alpha -1$}
\end{cases} \quad \text{near zero,}
\end{equation}
and
\begin{equation}\label{dec256}
A_{\frac n\alpha}(t) \,\, \simeq \,\, \begin{cases} t^{\frac {np}{n-\alpha p}} (\log t)^{{\color{black}\frac {n r  }{n-\alpha p}}} & \quad  \text{ if $1\leq p< \frac n\alpha$ }
\\
e^{t^{\frac{n}{n-(r +1)\alpha}}}&  \quad \text{if  $p=\frac n\alpha$ and $r < \frac n\alpha -1$}
\\
e^{e^{t^{\frac n{n-\alpha}}}} &  \quad \text{if  $p=\frac n\alpha$ and $r = \frac n\alpha -1$}
\\
\infty &  \quad \text{otherwise}
\end{cases} \quad \text{near infinity.}
\end{equation} 
In particular, the choice $p_0=p=1$ and $r_0=0$ yields the inequality \eqref{exlog}.   By contrast, as noticed in Section \ref{intro}, this inequality fails if the constraint \eqref{constr} is dropped. This claim can be verified via an application of \cite[Theorem 2]{Cianchi-JLMS}, where boundedness properties of Riesz potentials in Orlicz spaces are characterized.
\\ Analogous conclusions hold with regard to
the inequality \eqref{bound-orlicz-domain}. However,  since  $|\Omega|<\infty$,
only the behaviours near infinity of $A$ and $A_{\frac n\alpha}$ displayed above are relevant in this case. In particular, the assumption \eqref{june4} can be dropped.

\iffalse
\\
In the special case when
\begin{equation}\label{jan1}p={p_0}<\tfrac ns \quad \text{ and} \quad  \alpha=\alpha_0 =0,
\end{equation}
this recovers inequality \eqref{intro2} for the classical fractional space $W^{s,p}(\rn)$. In the borderline situation corresponding to
\begin{equation}\label{jan2}
p={p_0}=\tfrac ns, \quad \alpha =0 \quad \text{and} \quad \alpha_0 > \tfrac ns -1,
\end{equation}
a fractional embedding of Pohozhaev-Trudinger-Yudovich  type  \cite{Poh, Tru, Yu} is established -- see also the recent paper \cite{PaRu} in this connection.
%\note[inline]{Lubos: I changed everywhere the spelling to `Pohozhaev' (because that is how we quote it in references) except in the title of Oton--Serra paper which quotes it spelled differently. MathSciNet says `Poho\v zaev' for this particular paper so I am not sure how to handle this properly.}
\\
On the other hand, Theorem \ref{T:C1} provides us with the optimal embedding \eqref{E:30emb} and inequality \eqref{E:30},  with a Young function $\widehat A$ whose behaviour is described in \eqref{dec253} and \eqref{dec254}.
The specific choices \eqref{jan1} yield inequality \eqref{intro3} -- a fractional extension of results of \cite{Oneil, Peetre} -- since the Orlicz-Lorentz target space \eqref{E:30emb} coincides with the standard Lorentz space $L^{\frac {np}{n-p},p}(\rn)$ in this case. Also, when the parameters $p,{p_0},\alpha, \alpha_0$ are as in \eqref{jan2}, inequality  \eqref{E:30} takes the form of a fractional version of  a result which, in   the integer-order case,    follows from a capacitary inequality of \cite{Ma1973} -- see \cite[Inequality (2.3.14)]{Mabook}.
\fi
%We shall be more specific in Example \ref{exdomains} with this regard.
}\end{example}

\begin{example}\label{ex2}{\rm
Let $A$ be a Young function such that
\begin{equation}\label{sep50}
A(t) \,\,\simeq\,\, \begin{cases} t^{p_0} (\log (\log \frac 1t))^{r_0} & \quad \text{near zero}
\\
t^p  (\log (\log t))^r & \quad \text{near infinity,}
\end{cases}
\end{equation}
where either $p_0>1$ and $r_0 \in \R$, or $p_0=1$ and $r_0 \leq 0$, and either $p>1$ and $r \in \R$ or $p=1$ and $r \geq 0$.  
\\  This function satisfies the assumption \eqref{conv0} if
\begin{equation}\label{sep51}
\text{$1\leq p_0< \frac n\alpha$ and $r_0$ is as above.}
\end{equation}
\\ From Theorem \ref{sobolev-orlicz}  we infer that the inequality \eqref{bound-orlicz} holds,  with
\begin{equation}\label{sep52}
A_{\frac n\alpha}(t)\,  \simeq\, t^{\frac {n{p_0}}{n-\alpha{p_0}}} (\log (\log \tfrac 1t))^{\frac {nr_0}{n-\alpha{p_0}}} \qquad \text{near zero,}
\end{equation}
and
\begin{equation}\label{sep53}
A_{\frac n\alpha}(t) \,\, \simeq \,\, \begin{cases} t^{\frac {np}{n-\alpha p}} (\log (\log t))^{{\color{black}\frac {n r  }{n-\alpha p}}} & \quad  \text{ if $1\leq p< \frac n\alpha$ }
\\ \\
e^{t^{\frac{n}{n-\alpha}}(\log t)^{\frac {r\alpha}{n-\alpha}}} &  \quad \,\,\text{if  $p=\frac n\alpha$}
\end{cases} \quad \text{near infinity.}
\end{equation} 
For $p_0=p=1$ and $r_0=0$ this results in  the inequality \eqref{exloglog}. 
 The failure of such inequality without 
 the constraint \eqref{constr}  can be demonstrated by  \cite[Theorem 2]{Cianchi-JLMS}.
\\ Conclusions in the same spirit hold for the inequality \eqref{bound-orlicz-domain}, where $|\Omega|<\infty$, with  simplifications analogous to those described in Example \ref{ex1}.}
\end{example}

The inequality \eqref{bound-orlicz}
can be improved if norms of Orlicz-Lorentz type are allowed on its left-hand side. 
\\ Let $A$ be  a Young function fulfilling the condition \eqref{conv0} and let $a: [0, \infty) \to [0, \infty)$ be the left-continuous function such that
\begin{align}
    \label{a}
    A(t) = \int_0^ta(\tau)\, d\tau \qquad \text{for $t\geq 0$.}
\end{align}
Denote by  $\widehat A$ the Young function given by
\begin{equation}\label{E:1}
	\widehat A (t)=\int_0^t\widehat a (\tau)\,d\tau\quad\text{for $t\geq 0$},
\end{equation}
where
\begin{equation}\label{E:2}
	{\widehat a\,}^{-1}(r) = \left(\int_{a^{-1}(r)}^{\infty}
		\left(\int_0^t\left(\frac{1}{a(\varrho)}\right)^{\frac{\alpha}{n-\alpha}}\,d\varrho\right)^{-\frac{n}{\alpha}}\frac{dt}{a(t)^{\frac{n}{n-\alpha}}}
				\right)^{\frac{\alpha}{\alpha-n}}
					\quad\text{for $r\ge0$}.
\end{equation}
Let $L(\widehat A,\frac{n}{\alpha})(\rn, \rM)$ be the  Orlicz-Lorentz space  defined as in \eqref{sep35}. Namely, $L(\widehat A,\frac{n}{\alpha})(\rn, \rM)$ is the rearrangement-invariant space associated with the function norm given by
\begin{equation}\label{E:29}
	\|f\|_{L(\widehat A,\frac{n}{\alpha})(0,\infty)}
		= \|r^{-\frac{\alpha}{n}}f^{*}(r)\|_{L^{\widehat A}(0, {\infty})}
\end{equation}
for $f \in \Mpl (0, \infty)$.
\\ The conclusions of our result about Riesz potential inequalities with Orlicz-Lorentz target spaces are best stated by distinguishing into the cases when the function $A$ fulfils \eqref{convinf} or the complementary condition
\begin{equation}\label{divinf}
	\int^\infty\left(\frac{t}{A(t)}\right)^{\frac{\alpha}{n-\alpha}}\,dt =\infty.
\end{equation}	
They are the subject of the following theorem,  which is a consequence of Theorem \ref{sobolev} and \cite[Inequalities (3.1) and (3.2)]{cianchi_ibero}.

\begin{theorem} [{\bf Riesz potential inequalities with Orlicz-Lorentz  targets}]\label{sobolev-orliczlorentz}
 Let $n,m,k, \alpha$ and $\mathcal L$ be as in Theorem \ref{sobolev}.
\iffalse
Let $n, m \in \N$, with $m,n \geq 2$, and $\alpha \in (0,n)$, and let $\mathcal L$ be any linear homogeneous first-order  co-canceling operator.
\fi
 Let $A$ be a Young function fulfilling the condition \eqref{conv0}.
\\ (i) Assume that \eqref{divinf} holds.
Then, there exists a constant $c=c(\alpha, \mathcal L)$ such that
\begin{equation}\label{bound-orliczlorentz1}
\|I_\alpha F\|_{L(\widehat A,\frac{n}{\alpha})(\rn, \rM)} \leq c \|F\|_{L^A(\rn, \mathbb R^m)}
\end{equation}
for    $F \in L^A_{\mathcal L}(\rn, \mathbb R^m)$.
\\ (ii) Assume that \eqref{convinf} holds. Then, 
there exists a~constant   $c=c(\alpha, A, \mathcal L)$ such that
\begin{equation}\label{bound-orliczlorentz2}
	\|I_\alpha F\|_{(L^\infty \cap L(\widehat A,\frac{n}{\alpha}))(\rn, \rM)} \leq c \|F\|_{L^A(\rn, \mathbb R^m)}
\end{equation}
for    $F \in L^A_{\mathcal L}(\rn, \mathbb R^m)$.
\end{theorem}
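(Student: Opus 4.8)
The plan is to reduce both inequalities \eqref{bound-orliczlorentz1} and \eqref{bound-orliczlorentz2} to the one-dimensional Hardy type inequality \eqref{bound1} and then invoke Theorem \ref{sobolev}. The key input is that, by the construction of the Sobolev conjugate Young function $\widehat A$ via \eqref{E:1}--\eqref{E:2}, the Orlicz-Lorentz space $L(\widehat A,\frac{n}{\alpha})(\rn,\rM)$ (respectively $(L^\infty\cap L(\widehat A,\frac{n}{\alpha}))(\rn,\rM)$) is precisely the space for which the relevant one-dimensional estimate holds with domain $L^A(0,\infty)$. Concretely, in case (i) — when \eqref{divinf} holds — I would appeal to \cite[Inequality (3.1)]{cianchi_ibero}, which asserts exactly that
\begin{equation*}
\bigg\|\int_s^\infty r^{-1+\frac\alpha n}f(r)\,dr\bigg\|_{L(\widehat A,\frac n\alpha)(0,\infty)} \leq c\,\|f\|_{L^A(0,\infty)}
\end{equation*}
for some constant $c=c(\alpha)$ and every $f\in L^A(0,\infty)$; in case (ii) — when \eqref{convinf} holds — I would use \cite[Inequality (3.2)]{cianchi_ibero}, which gives the same estimate with the target function norm $\|\cdot\|_{L(\widehat A,\frac n\alpha)(0,\infty)}$ replaced by $\|\cdot\|_{(L^\infty\cap L(\widehat A,\frac n\alpha))(0,\infty)}$, now with a constant depending also on $A$ (since the $L^\infty$ component forces a finite-measure type bound near infinity in the $r$ variable).

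With these one-dimensional inequalities in hand, the rest is immediate. First I would record that, under \eqref{conv0}, the functional $\|\cdot\|_{L(\widehat A,\frac n\alpha)(0,\infty)}$ defined by \eqref{E:29} is a genuine rearrangement-invariant function norm — this is part of the content of \cite[Proposition 2.1]{cianchi_ibero} together with the fact, verified there, that $\widehat A$ and $q=\frac n\alpha$ satisfy the growth condition \eqref{sep36}; the same holds for $(L^\infty\cap L(\widehat A,\frac n\alpha))(0,\infty)$, being the intersection of two function norms. Then, taking $Y(0,\infty)=L(\widehat A,\frac n\alpha)(0,\infty)$ in case (i) (respectively $Y(0,\infty)=(L^\infty\cap L(\widehat A,\frac n\alpha))(0,\infty)$ in case (ii)) and $X(0,\infty)=L^A(0,\infty)$, the hypothesis \eqref{bound1} of Theorem \ref{sobolev} is exactly the one-dimensional inequality just cited. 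Theorem \ref{sobolev} then yields \eqref{bound2} for every $F\in X_{\mathcal L}(\rn,\rM)=L^A_{\mathcal L}(\rn,\rM)$, which is precisely \eqref{bound-orliczlorentz1} in case (i) and \eqref{bound-orliczlorentz2} in case (ii), with the constant $c_2=c_2(c_1,\alpha,\mathcal L)$ — hence $c=c(\alpha,\mathcal L)$ in case (i) and $c=c(\alpha,A,\mathcal L)$ in case (ii), tracking the dependence of $c_1$ on $A$.

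The only point requiring a little care — and the one I expect to be the main obstacle in writing the proof cleanly — is the precise matching between the formulation of \cite[Inequalities (3.1) and (3.2)]{cianchi_ibero} and the normalization used here. In \cite{cianchi_ibero} the inequalities are typically stated as embeddings of Orlicz-Sobolev spaces $V^1L^A(\rn)$, or equivalently as boundedness of the operator $f\mapsto\int_s^\infty r^{-1+\frac1n}f(r)\,dr$ for the first-order case $\alpha=1$; one must check that the scaling $r^{-1+\frac\alpha n}$ for general $\alpha\in(0,n)$ produces exactly the conjugate $\widehat A$ as defined by \eqref{E:2}, rather than a conjugate associated with a different order. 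This is a routine but slightly delicate change-of-variables bookkeeping, already implicit in the discussion preceding Theorem \ref{sobolev-orlicz} where the Sobolev conjugate $A_{\frac n\alpha}$ of \eqref{An}--\eqref{H} is used for arbitrary $\alpha$; I would simply cite the analogous fractional-order statement (as in \cite{ACPS}, \cite{ACPS_NA}) to close the gap, exactly as was done for Theorem \ref{sobolev-orlicz}. No genuinely new argument beyond Theorem \ref{sobolev} and the cited one-dimensional Orlicz-Lorentz Hardy inequalities is needed.
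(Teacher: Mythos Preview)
Your proposal is correct and takes essentially the same approach as the paper: the paper states just before the theorem that it ``is a consequence of Theorem \ref{sobolev} and \cite[Inequalities (3.1) and (3.2)]{cianchi_ibero}'' and gives no further proof. Your additional remarks about verifying that $\|\cdot\|_{L(\widehat A,\frac n\alpha)(0,\infty)}$ is a genuine rearrangement-invariant function norm and about the $\alpha$-scaling are reasonable bookkeeping but are not treated as obstacles in the paper either.
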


\begin{remark}\label{finite measure} {\rm A version of Theorem \ref{sobolev-orliczlorentz} holds for functions vanishing outside a set $\Omega$ of finite measure. In the norms in \eqref{bound-orliczlorentz1} and \eqref{bound-orliczlorentz2}, the set $\rn$ has to be replaced with $\Omega$, and  the condition \eqref{conv0} can be disregarded in this case. Moreover, the space $(L^\infty \cap L(\widehat A,\frac{n}{\alpha}))(\Omega, \rM)$ in \eqref{bound-orliczlorentz2} agrees with $L^\infty (\Omega, \rM)$, up to equivalent norms.}
\end{remark}

\begin{example}\label{ex orlicz-lorentz}{\rm
Consider a Young function $A$ as in \eqref{june3}--\eqref{june4}. Theorem \ref{sobolev-orliczlorentz}
 tells us that, if 
\begin{equation}\label{june7}
\text{either $1\leq p< \frac n\alpha$,  or $p=\frac n\alpha$ and $r \leq \frac n\alpha -1$,}
\end{equation}
then the inequality \eqref{bound-orliczlorentz1} holds with
\begin{equation}\label{june8}
\widehat A(t)\,\,  \simeq\,\, \begin{cases} t^{p_0}(\log \frac 1t)^{r_0} & \quad \text{ if $1\leq {p_0}< \frac n\alpha$ }
\\
t^{\frac n\alpha}  (\log \frac 1t)^{r_0 - \frac n\alpha} & \quad  \text{if ${p_0}=\frac n\alpha$ and $r_0> \frac n\alpha -1$}
\end{cases} \quad \text{near zero,}
\end{equation}
and
\begin{equation}\label{dec254}
\widehat A(t) \,\, \simeq \,\, \begin{cases} t^p (\log t)^r & \quad  \text{ if $1\leq p< \frac n\alpha$ }
\\
t^{\frac n\alpha}  (\log t)^{r - \frac n\alpha} &  \quad \text{if  $p=\frac n\alpha$ and $r < \frac n\alpha -1$}
\\
t^{\frac n\alpha}  (\log t)^{-1} (\log (\log t))^{-\frac n\alpha}&  \quad \text{if  $p=\frac n\alpha$ and $r = \frac n\alpha -1$}
\end{cases} \quad \text{near infinity.}
\end{equation}
In particular, the   choice ${p_0}=p<\tfrac n\alpha$ and $r_0 =r=0$ yields $\widehat A(t) =t^p$.
\\ From an application of \cite[Lemma 6.12, Chapter 4]{BennettSharpley}, one can deduce that, if $1\leq p=p_0<\frac n\alpha$ and $r=0$, then 
\begin{align}\label{sep40}
    L(\widehat A,\tfrac{n}{\alpha})(\rn, \rM)= L^{\frac {np}{n-\alpha p},p}(\log L)^{\frac r p}(\rn, \rM),
\end{align}
up to equivalent norms. Hence, the inequality \eqref{exLZ} follows 
by choosing $p_0=p=1$ and $r_0=0$. Characterizations of the space $L(\widehat A,\tfrac{n}{\alpha})(\rn, \rM)$, analogous to \eqref{sep40}, for ${p_0}=p=\tfrac n\alpha$, in terms of 
 Lorentz-Zygmund or generalized  Lorentz-Zygmund spaces are also available -- see e.g. \cite[Example 1.2]{cianchi_ibero}.
\\   As claimed in Section \ref{intro}, the inequality \eqref{exLZ}  
breaks down in the space of all functions $F\in L (\log L)^r(\rn, \rM)$. Indeed, in  the light of Remark \ref{riesz-classical}, this inequality without the constraint \eqref{constr} would imply that
\begin{equation}\label{boundOL-fail}
\bigg\| s^{-1+\frac \alpha n}\int_0^sf(r)\, dr \bigg\|_{L^{\frac n{n-\alpha},1,r}(0,\infty)} \leq c \|f\|_{ L (\log L)^r(0,\infty)}
\end{equation}
for some constant $c$ and every $f \in L (\log L)^r(0,\infty)$. Thanks to \eqref{eqZYg}, the latter inequality in turn implies that
\begin{align}
    \label{OL-fail}  \int_0^1\bigg(
    (\cdot)^{-1+\frac \alpha n}\int_0^{(\cdot)}f(r)\, dr
   \bigg)^{**}(s) s^{\frac{n-\alpha}n}(\log_+ \tfrac 2s\big)^{r}\frac{ds}s \leq c \int_0^1 f^{*}(s) (\log_+ \tfrac 2s\big)^{r} \, ds
\end{align}
for all $f\in \Mpl (0,1)$ making the right-hand side finite. Consider functions
$f$ of the form 
\begin{align}
    \label{f}
    f(s)= \tfrac 1s \big(\log \tfrac 1s\big)^{-\gamma},
\end{align}
 with 
   $1+r  < \gamma < 2+r$. Then, 
\begin{align}
    \label{f**}
    f^{*}(s) \approx \tfrac 1s \big(\log \tfrac 1s\big)^{-\gamma} \quad \text{and} \quad \bigg(
    (\cdot)^{-1+\frac \alpha n}\int_0^{(\cdot)}f(r)\, dr
   \bigg)^{**}(s) \approx s^{\frac \alpha n -1} \big(\log \tfrac 1s\big)^{1-\gamma},
\end{align}
up to multiplicative constants independent of $s\in (0,1)$.  Under 
our  assumptions on $\gamma$,  the right-hand side of \eqref{OL-fail} is finite, whereas its left-hand side is infinite. This demonstrates that
 the inequality \eqref{OL-fail} fails. 
\\ According to Remark \ref{finite measure},  inequalities parallel to \eqref{bound-orliczlorentz1} and \eqref{bound-orliczlorentz2} for functions $F$ supported in sets $\Omega$, with $|\Omega|<\infty$, hold even if $A$ does not satisfy the assumption \eqref{june4}.  The only relevant piece of information is indeed the behaviour near $\infty$ of $A$ and $\widehat A$ described in \eqref{june3} and \eqref{dec254}.

}
\end{example}

\begin{example}\label{lorentzzygmund} {\rm We conclude with an application of Corollary \ref{sobolev-opt-domain} to Lorentz-Zygmund spaces. For brevity, we limit ourselves to domain spaces whose  first index equals $1$, namely to spaces of the form $L^{(1,q,r)}(\Omega, \rM)$, with $q\in [1, \infty)$.
 As explained in Section \ref{intro}, these are the most relevant in the present setting, since the Riesz potential inequality with the same target space fails if the co-canceling constraint is dropped. 
In order to avoid introducing new classes of functions spaces, we also assume, for simplicity, that $r>-\frac 1q$. 
\\ Assume that $\Omega$ is a measurable set in $\rn$ such that $|\Omega|<\infty$. An application of Corollary \ref{sobolev-opt-domain}, combined with a result of \cite{cavaliere-new} where  an estimate for the norm \eqref{may7-domain} is determined for the space 
$L^{(1,q,r)}(\Omega, \rM)$, tells us that 
    \begin{equation*}
 \|I_\alpha F\|_{L^{\frac n{n-\alpha},q,r+1}(\Omega, \rM)} \leq c \|F\|_{L^{(1,q,r)}(\Omega, \rM)}
\end{equation*}
for some constant $c$ and   every $F \in L^{(1,q,r)}_{\mathcal L}(\rn, \mathbb R^m)$ vanishing outside $\Omega$. The restriction to sets $\Omega$ with finite measure is needed for  the space $L^{(1,q,r)}(\Omega, \mathbb R^m)$ not to be trivial.
\\  On the other hand, the inequality in question does not hold for functions which do not satisfy the co-canceling condition \eqref{constr}. Actually, by Remark \ref{riesz-classical}, if such an inequality were true, then we would have 
\begin{equation}\label{boundLZ-fail}
\bigg\| s^{-1+\frac \alpha n}\int_0^sf(r)\, dr \bigg\|_{L^{\frac n{n-\alpha},q,r+1}(0,|\Omega|)} \leq c_1 \|f\|_{L^{(1,q,r)}(0,|\Omega|)}
\end{equation}
for some constant $c$ and every $f \in L^{(1,q,r)}(0,|\Omega|)$. By assuming, without loss of generality, that $|\Omega|=1$, the inequality \eqref{boundLZ-fail} reads
\begin{align}
    \label{LZ-fail}   \int_0^1\Bigg[\bigg(
    (\cdot)^{-1+\frac \alpha n}\int_0^{(\cdot)}f(r)\, dr
   \bigg)^{**}(s) s^{\frac{n-\alpha}n}(\log \tfrac 2s\big)^{r+1}\Bigg]^q\frac{ds}s \leq c \int_0^1\Big(f^{**}(s) s(\log \tfrac 2s\big)^{r}\Big)^q\frac{ds}s
\end{align}
for all $f\in \Mpl (0,1)$ making the right-hand side finite. Such an inequality fails for any function $f$ as in  \eqref{f}, with 
$1+r+\tfrac 1q < \gamma < 2+r+\tfrac 1q$. Indeed,  owing to equation \eqref{f**}, one can verify that the right-hand side of \eqref{LZ-fail} is finite, whereas its left-hand side is infinite.}
\end{example}

 \section{A rearrangement estimate}\label{estimate}

As mentioned above, a crucial ingredient in the proof of Theorem \ref{sobolev} is the rearrangement estimate for Riesz potentials of $k$-th order divergence free vector fields provided by the following theorem.

 \begin{theorem}\label{K-CZ}
  Let $k \in \N$, $n, l, \ell \geq 2$, and let $N$ be as in \eqref{N}. Let $\alpha \in (0,n)$.  
%\begin{align*}
  %  T&:  L^1(\rn,\om) \to L^{1,\infty}(\rn,\mathbb{R}^\ell)\\
  %  T&:L^{p,q}(\rn,\om) \to  L^{p,q}(\rn,\mathbb{R}^\ell)
%\end{align*}
%for $1<p<+\infty$, $1\leq q \leq +\infty$
Then, there exists a positive constant $c=c(\alpha,n,k)$ such that
\begin{align}\label{nov100}
\int_0^t  s^{-\frac{\alpha}{n}} (I_\alpha F)^*(s)\, ds
\leq c\int_0^{t}  s^{-\frac{\alpha}{n}} \int_{s}^{\infty} F^*(r)r^{-1+\alpha/n}\,drds  \quad\text{for $t>0$,}
\end{align}
for every 
%\todo[inline]{A: we shall apply the inequality \eqref{nov100} to functions $F$ which are not required to belong to $L^1(\rn, \mathbb{R}^N)+ L^{\frac n\alpha, 1}(\rn, \mathbb{R}^N)$. For these functions the right-hand side is infinite and the inequality trivially holds. What do we have to assume on $F$  for $I_\alpha T Q  F$ to be defined? }
$F \in  L^1(\rn, \mathbb{R}^{N\times l})+ L^{\frac n\alpha, 1}(\rn, \mathbb{R}^{N\times l})$, such that ${\rm div_k}(F^\beta)_i=0$ for $i =1, \ldots,l$.  Here,   $F=[F^\beta]$ with rows $(F^\beta)_i$ for $i = 1,\ldots,l$.
 \end{theorem}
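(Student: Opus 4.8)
The strategy is to obtain the rearrangement estimate \eqref{nov100} as a consequence of a $K$-functional inequality, combining the interpolation-theoretic input from Section~\ref{intro} (namely the strong-type estimate \eqref{false1'} of \cites{HRS,Stolyarov}) with the computation of the $K$-functional for the couple of divergence-free spaces. First I would record the two endpoint estimates for the operator $I_\alpha$ acting on $k$-th order divergence-free vector fields: on the one hand, by \cite{Stolyarov} (the higher-order analogue of \eqref{false1'}),
$$
I_\alpha : L^1_{\operatorname*{div}_k}(\rn, \R^{N\times l}) \to L^{\frac n{n-\alpha},1}(\rn, \R^{N\times l}),
$$
with norm depending only on $n$ and $\alpha$; on the other hand, the classical (unconstrained) endpoint estimate $I_\alpha : L^{\frac n\alpha,1}(\rn,\R^{N\times l}) \to L^\infty(\rn,\R^{N\times l})$ holds a fortiori on the divergence-free subspace. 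Applying these row-wise to $F=[F^\beta]$ and summing over $|\beta|=k$ is harmless since there are finitely many multi-indices.

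Next I would invoke Theorem~\ref{K-CZ}'s prerequisite, the $K$-functional identity for the couple $(L^1_{\operatorname*{div}_k}, L^{\frac n\alpha,1}_{\operatorname*{div}_k})$. By the higher-order $K$-functional theorem (the analogue of Theorem~\ref{lemma2} proved via Lemma~\ref{lemma1-higherorder}, or directly \cite[Theorem 5.1]{BCS_canceling}), any $F \in L^1(\rn,\R^{N\times l}) + L^{\frac n\alpha,1}(\rn,\R^{N\times l})$ with $\operatorname*{div}_k (F^\beta)_i = 0$ admits a decomposition $F = F_1 + F_{n/\alpha,1}$ into divergence-free pieces $F_1 \in L^1_{\operatorname*{div}_k}$ and $F_{n/\alpha,1} \in L^{\frac n\alpha,1}_{\operatorname*{div}_k}$ with
$$
\|F_1\|_{L^1} + t\,\|F_{n/\alpha,1}\|_{L^{\frac n\alpha,1}} \lesssim \int_0^t F^*(s)\,ds + t^{1-\alpha/n}\int_t^\infty s^{-1+\frac\alpha n} F^*(s)\,ds
$$
for $t>0$, the crucial point (cf.\ Remark~\ref{rem-sep}) being that the decomposition respects the constraint. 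Applying $I_\alpha$ to this decomposition and using the two endpoint bounds yields
$$
K\big(I_\alpha F, t; L^{\frac n{n-\alpha},1}(\rn,\R^l), L^\infty(\rn,\R^l)\big) \lesssim \int_0^t F^*(s)\,ds + t^{1-\alpha/n}\int_t^\infty s^{-1+\frac\alpha n} F^*(s)\,ds \quad\text{for }t>0.
$$

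Finally I would convert both sides into the integral form of \eqref{nov100}. On the left, by the formula for the $K$-functional of the couple $(L^{p,1},L^\infty)$ (see \cite[Corollary 2.3, Chapter 5]{BennettSharpley} or the Holmstedt-type computation), one has $K\big(I_\alpha F, t; L^{\frac n{n-\alpha},1}, L^\infty\big) \approx \int_0^{t^{n/(n-\alpha)}} s^{-\alpha/n}(I_\alpha F)^*(s)\,ds$; after the change of variable $t \mapsto t^{(n-\alpha)/n}$ this is exactly the left-hand side of \eqref{nov100}. On the right, Fubini's theorem gives the identity
$$
\int_0^t F^*(s)\,ds + t^{1-\alpha/n}\int_t^\infty s^{-1+\frac\alpha n} F^*(s)\,ds = \tfrac{n-\alpha}{n}\int_0^t s^{-\alpha/n}\int_s^\infty r^{-1+\alpha/n}F^*(r)\,dr\,ds,
$$
which is the right-hand side of \eqref{nov100} up to a constant. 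The main obstacle is not any single estimate but ensuring the constraint-compatible $K$-functional decomposition is available: this is precisely the delicate content imported from Sections~\ref{sec:kfunct}--\ref{sec:kfuncthigher} (the invariance of the higher-order Helmholtz projection $P_k$ on divergence-free fields, Lemma~\ref{lemma1-higherorder}), and without it one could only decompose within $L^1_{\operatorname*{div}_k} + L^{\frac n\alpha,1}_{\operatorname*{div}_k}$ rather than for an arbitrary divergence-free $F$ in the sum of the larger spaces. Everything else is bookkeeping with rearrangements and Fubini.
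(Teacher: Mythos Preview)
Your proposal is correct and follows essentially the same route as the paper's proof: record the two endpoint bounds $I_\alpha:L^1_{\operatorname*{div}_k}\to L^{\frac{n}{n-\alpha},1}$ (from \cite{Stolyarov}/\cite{HS}) and $I_\alpha:L^{\frac{n}{\alpha},1}\to L^\infty$, feed them into the constraint-compatible $K$-functional decomposition (Theorem~\ref{lemma2} and its higher-order analogue), and then convert both sides using \cite[Corollary~2.3, Chapter~5]{BennettSharpley} on the left and the Fubini identity~\eqref{nov130} on the right. You have also correctly singled out the one nontrivial ingredient, namely that the decomposition $F=F_1+F_{n/\alpha,1}$ can be taken with each piece $\operatorname*{div}_k$-free even when $F$ is only assumed to lie in the sum of the unconstrained spaces.
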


\begin{remark}
    \label{rem-Kfunct}
    {\rm The 
    inequality \eqref{nov100} is  equivalent to the $K$-functional inequality
\begin{equation}\label{dec32second}
K\big(I_\alpha F, t;   L^{\frac{n}{n-\alpha}, 1}(\rn, \R^{N\times l}),  L^\infty(\rn, \R^{N\times l})\big)
\leq c K\big(F, t/c;   L^1_{\operatorname*{div_k}}(\rn, \R^{{N\times }l}),   L^{\frac n\alpha, 1}_{\operatorname*{div_k}}(\rn, \R^{{N\times }l})\big)
\end{equation}
for every $F \in  L^1(\rn, \mathbb{R}^{N\times l})+ L^{\frac n\alpha, 1}(\rn, \mathbb{R}^{N\times l})$, such that 
${\rm div_k}(F^\beta)_i=0$.}
\end{remark}

Theorem \ref{K-CZ} is a special case of \cite[Theorem 5.1]{BCS_canceling}, which, loosely speaking, deals with the Riesz potential operator $I_\alpha$ possibly composed with singular integral operators satisfying customary assumptions. 
In this section we present 
a direct proof of Theorem 
\ref{K-CZ} in the case when $k=1$. 
The critical step is a formula for the $K$-functional of divergence  the couple $(L^1_{\operatorname*{div}}(\rn, \rn),L^{p,q}_{\operatorname*{div}}(\rn, \rn))$.   This is the content of the next result.

\begin{theorem}[{\bf $K$-functional for divergence--free vector fields}]

\label{lemma2}
Let $p \in (1,\infty)$ and $q\in [1, \infty]$.  Then,
\begin{align}\label{sep1}
K(F,t,L^1_{\operatorname*{div}}(\rn, \rn),L^{p,q}_{\operatorname*{div}}(\rn, \rn)) & \approx K(F,t,L^1(\rn, \rn),L^{p,q}(\rn, \rn))
\\ \nonumber & \approx
\int_0^{t^{p'}}F^*(s)\, ds + t \bigg(\int_{t^{p'}}^\infty s^{-1+\frac qp} F^*(s)\, ds\bigg)^{\frac 1q}  \qquad \text{for $t>0$,}
\end{align}
for every   $F \in L^1 (\rn, \rn) +L^{p,q}(\rn, \rn)$ such that $\operatorname*{div}F=0$, with equivalence constants depending on $n, p, q$. 
\end{theorem}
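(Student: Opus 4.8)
I would prove the equivalence \eqref{sep1} by establishing the two halves separately: the nontrivial identification of the constrained $K$-functional with the unconstrained one, and then the classical Holmstedt-type formula for the unconstrained couple. Since the admissible decompositions in $K(F,t,L^1_{\operatorname*{div}},L^{p,q}_{\operatorname*{div}})$ form a subclass of those allowed in $K(F,t,L^1,L^{p,q})$, the inequality $K(F,t,L^1,L^{p,q}) \le K(F,t,L^1_{\operatorname*{div}},L^{p,q}_{\operatorname*{div}})$ is immediate. The content is the reverse inequality, up to a multiplicative constant: given a divergence-free $F$ and a near-optimal unconstrained decomposition $F = F_1 + F_{p,q}$, I must manufacture a decomposition into divergence-free pieces of comparable norms.

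\textbf{Key steps.} First I would record the two boundedness properties of the Helmholtz projection $\mathcal H$ that drive everything: its $L^{p,q}$-boundedness for $1<p<\infty$ (via Mihlin's multiplier theorem and interpolation) and the fact that its kernel satisfies H\"ormander's condition \eqref{hormander}. Then I would state and prove the projection lemma in the spirit of Lemma \ref{lemma1}: for $PF := F + \mathcal H F$, one has $PF = F$ whenever $\operatorname*{div}F = 0$, and $\operatorname*{div}PF = 0$ whenever the integrability of $F$ and $PF$ permits the duality computation $\int \mathcal H F \cdot \nabla\varphi = \int F\cdot \mathcal H\nabla\varphi = -\int F\cdot\nabla\varphi$, using $\mathcal H\nabla\varphi = -\nabla\varphi$ (equation \eqref{aug18}). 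The delicate point in the first assertion is that $\operatorname*{div}F = 0$ is only assumed against $C^\infty_c$ test functions, whereas $\varphi = \operatorname*{div}(-\Delta)^{-1}\Phi$ for $\Phi \in C^\infty_c$ is merely $C^1$ with slow decay ($|\varphi| \lesssim (1+|x|)^{-(n-1)}$, $|\nabla\varphi| \lesssim (1+|x|)^{-n}$); I would handle this by a mollification-and-cutoff argument, estimating the error terms with H\"older's inequality in Lorentz spaces, the fact that mollification is bounded on every rearrangement-invariant space (Calder\'on's theorem), and absolute continuity of the relevant Lorentz norms, as in the proof of Lemma \ref{lemma1}.

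\textbf{Assembling the main inequality.} By scaling, normalize $K(F,t,L^1,L^{p,q}) = 1$ and pick $F = F_1 + F_{p,q}$ with $\|F_1\|_{L^1} + t\|F_{p,q}\|_{L^{p,q}} \le 2$. Apply the Calder\'on--Zygmund decomposition to $F_1$ at height $\lambda = t^{-1/p'}$, writing $F_1 = H + K$ with $|H| \le \lambda$, $\|H\|_{L^1} \lesssim 1$, and $K = \sum_i K_i$ with $K_i$ supported on balls $B_i$, mean zero, $\sum_i |B_i| \lesssim \lambda^{-1}$, $\sum_i \|K_i\|_{L^1} \lesssim 1$. Since $F = PF$, write $F = P(H + F_{p,q}) + PK$. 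For the first piece, boundedness of $P$ on $L^{p,q}$ together with $\|H\|_{L^{p,q}} \lesssim \|H\|_{L^1}^{1/p}\|H\|_{L^\infty}^{1/p'} \lesssim \lambda^{1/p'}$ (inequality \eqref{aug3}) gives $\|P(H+F_{p,q})\|_{L^{p,q}} \lesssim \lambda^{1/p'} + t^{-1} \approx t^{-1}$. For $\|PK\|_{L^1}$, split over $\Omega = \bigcup_i B_i^*$ (doubled balls): off $\Omega$ the H\"ormander condition and the mean-zero property give $\sum_i \|\mathcal H K_i \chi_{(B_i^*)^c}\|_{L^1} \lesssim \sum_i \|K_i\|_{L^1} \lesssim 1$; on $\Omega$ write $PK = K + F_{p,q} + H - P(F_{p,q}+H)$ and use $|\Omega|^{1/p'} \lesssim \lambda^{-1/p'}$ together with the Lorentz H\"older inequality to get $\|PK\chi_\Omega\|_{L^1} \lesssim 1 + \lambda^{-1/p'}t^{-1} \approx 1$ by the choice of $\lambda$. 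This produces an admissible divergence-free decomposition with $\|PK\|_{L^1} + t\|P(H+F_{p,q})\|_{L^{p,q}} \lesssim 1$, giving the reverse inequality. Finally, the second equivalence in \eqref{sep1} is Holmstedt's formula \cite[Theorem 4.1]{Hol} for the couple $(L^1, L^{p,q})$, obtained from the elementary $K$-functional for $(L^1, L^\infty)$ and reiteration.

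\textbf{Main obstacle.} I expect the technical heart to be the extension of the distributional identity $\int F\cdot\nabla\varphi = 0$ from compactly supported test functions to the slowly-decaying $\varphi = \operatorname*{div}(-\Delta)^{-1}\Phi$ — controlling the boundary and tail terms in the cutoff argument uniformly in the mollification parameter, which requires the careful interplay between $L^1$ and $L^{p,q}$ tails, the rearrangement inequality \eqref{nov1} for mollifications, and absolute continuity of Lorentz norms (with the case $q = \infty$ needing a separate treatment via the predual exponent). Everything downstream — the Calder\'on--Zygmund surgery and the bookkeeping of norms — is routine once the projection lemma is in hand.
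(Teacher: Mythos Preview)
Your proposal is correct and follows exactly the paper's approach: the projection lemma for $P=\mathrm{Id}+\mathcal H$ (including the mollification-and-cutoff extension to slowly decaying test functions), the Calder\'on--Zygmund surgery on the $L^1$ piece, the split of $\|PK\|_{L^1}$ over $\Omega=\bigcup_i B_i^*$ using H\"ormander's condition off $\Omega$ and the algebraic identity $PK=K+F_{p,q}+H-P(F_{p,q}+H)$ on $\Omega$, and Holmstedt's formula for the second equivalence. One slip: the Calder\'on--Zygmund height must be $\lambda=t^{-p'}$, not $\lambda=t^{-1/p'}$; with your choice, $\lambda^{1/p'}=t^{-1/(p')^2}\not\lesssim t^{-1}$ for small $t$ and $\lambda^{-1/p'}t^{-1}\not\lesssim 1$ for large $t$, so neither the $L^{p,q}$ bound nor the on-$\Omega$ $L^1$ bound would close --- taking $\lambda=t^{-p'}$ gives $\lambda^{1/p'}=t^{-1}$ and $\lambda^{-1/p'}=t$, which is exactly what the balance requires.
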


The proof of Theorem \ref{lemma2} builds upon results from \cite{Bourgain,Pisier}. It requires a precise analysis of   mapping properties of the  Helmholz projection singular integral operator under the constraint $\operatorname*{div}F=0$. This is the content of 
Lemma \ref{lemma1} below.
 The relevant operator  is formally defined as 
   \begin{align}
    \label{aug17}\mathcal HF = \nabla \operatorname*{div} (-\Delta)^{-1} F
\end{align}
 for   $F \in L^1(\rn, \rn) +L^{p,q}(\rn, \rn)$.
 Observe that, owing to Fourier calculus,  
\begin{align*}
\mathcal H \Phi = \left( -\frac{\xi}{|\xi|} \frac{\xi}{|\xi|}\cdot \widehat{\Phi}(\xi) \right)\widecheck{\phantom{x}}
\end{align*}
 if $\Phi \in C^\infty_c(\mathbb{R}^n,\mathbb{R}^n)$.   
 \\ The operator $\mathcal H$ is bounded on $L^p(\mathbb{R}^n,\mathbb{R}^n)$ for $1<p<\infty$ and therefore also on $L^{p,q}(\mathbb{R}^n,\mathbb{R}^n)$ for $1<p<\infty$ and $1\leq q \leq \infty$.  This can be seen, for instance, because 
\begin{align*}
\left|\frac{\partial^\beta}{\partial \xi^\beta} \frac{\xi}{|\xi|} \frac{\xi}{|\xi|} \right| \lesssim |\xi|^{-|\beta|} \qquad \text{for $\xi \neq 0$,}
\end{align*}
for every multi-index $\beta \in \mathbb{N}_0^n$.
This allows one to invoke Mihlin's multiplier theorem \cite[Theorem 6.2.7 on p.~446]{grafakos} to prove   boundedness in $L^p$, followed by interpolation \cite[Theorem 1.4.19 on p.~61]{grafakos} for $L^{p,q}$ boundedness.
\\ 
The kernel $\kappa : \rn \setminus \{0\} \to \R$ of the operator $\mathcal H$ fulfills the so called H\"ormander condition:
\begin{align}
    \label{hormander}
    \sup_{x\neq 0}\int_{\{|y|\geq 2|x|\}} |\kappa (y-x)- \kappa (y)|\, dy<\infty,
\end{align}
see \cite[Proof of Theorem 6.2.7]{grafakos}.
\\ Notice that, if $\varphi \in C^\infty_c(\mathbb{R}^n)$, then   
\begin{align}
    \label{aug18}
    \mathcal H \,\nabla \varphi = -\nabla \varphi.
\end{align}

\begin{lemma}
%[{\bf A divergence-free projection operator}]
\label{lemma1}
Let  $p\in (1,\infty)$ and $q\in [1, \infty]$.   Define the operator $P$ as
\begin{align*}
PF = F+\mathcal H F
\end{align*}
for  $F \in L^1(\rn, \rn) +L^{p,q}(\rn, \rn)$. 
\\ (i) If $\operatorname*{div}F=0$ in the sense of distributions, then
\begin{align}\label{aug20}
PF=F.
\end{align}
 (ii)
If $F, PF \in L^1(\rn, \rn)$ or $F, PF \in L^{p,q}(\rn, \rn)$, then  
\begin{align}\label{aug21}
\operatorname*{div} PF = 0
\end{align}
in the sense of distributions.
\end{lemma}

\begin{proof} Throughout this proof, the constants in the relations $\lq\lq \lesssim"$ and $\lq\lq \approx"$ only depend on $n, p, q$.
\\
Part (i).
Let $F \in L^1(\rn, \rn) + L^{p,q}(\rn, \rn)$ be such that $\operatorname*{div}F=0$
 in the sense of distributions, i.e. 
\begin{align}\label{div_free}
\int_{\mathbb{R}^n} F \cdot \nabla \varphi\;dx =0
\end{align}
for all $\varphi \in C^\infty_c(\mathbb{R}^n)$.  We begin by showing that   the identity \eqref{div_free} also holds for all $\varphi \in C^1(\mathbb{R}^n)$ such that $\varphi \in L^\infty(\mathbb{R}^n)\cap L^{n'p',n'q'}(\mathbb{R}^n)$ and $\nabla \varphi \in L^\infty(\rn, \rn)\cap L^{p',q'}(\rn, \rn)$.  To see this, given  such a function $\varphi$  and $R>0$, consider a sequence 
$\{\rho_h\}$  of standard mollifiers supported in the ball $B_{1/h}$,  and  a cutoff function $\eta \in C^1_c(\mathbb{R}^n)$ such that $\eta =1$ on $B_R$, $\eta =0$ outside $B(2R)$ and $|\nabla\eta|\lesssim 1/R$. We have that
\begin{align}\label{aug15}
\int_{\mathbb{R}^n} F \cdot \nabla \varphi\;dx &= 
\lim_{h \to \infty} \bigg(\int_{\mathbb{R}^n} F \cdot \nabla ((\varphi\ast \rho_h) \eta)\;dx +\int_{\mathbb{R}^n} F \cdot \nabla ((\varphi\ast \rho_h)(1-\eta))\;dx\bigg) \\
\nonumber &= \lim_{h \to \infty} \int_{\mathbb{R}^n} F \cdot \nabla ((\varphi\ast \rho_h)(1-\eta))\;dx,
\end{align}
%\todo[inline]{In the next section we use $l$ for the mollifiers because the order of the operator is $k$.  Maybe we should switch the $k$ to $l$ here.}
%\todo[inline]{A: $\ell$ was also taken for the dimension of some target spaces. I switched to $h$ for mollifiers everywhere}
where the first equality holds thanks to the dominated convergence theorem, since $\nabla \varphi \in L^\infty(\rn, \rn)$, and the second one by \eqref{div_free}.
\\ Let $F_1 \in L^1(\rn, \rn)$ and $F_{p,q} \in L^{p,q}(\rn, \rn)$ be such that $F = F_1 + F_{p,q}$.
Thus,
\begin{align}\label{aug8}
\left|\int_{\mathbb{R}^n} F \cdot \nabla ((\varphi\ast \rho_h)(1-\eta))\;dx\right| &\lesssim \int_{\rn \setminus B_R} (|F_1| +|F_{p,q}|)(|\nabla \varphi|* \rho_h)  \;dx \\ \nonumber
&\quad+ \frac 1R \int_{B_{2R} \setminus B_R} (|F_1| +|F_{p,q}|)|(|\varphi| *  \rho_h)\;dx
\end{align} 
for every $h\in \mathbb{N}$ and $R>0$.
%\todo[inline]{A: I still do not see how to get rid of $\rho_h$ in the argument. I am writing a proof keeping the convolution in the inequality above. If you have a simpler proof avoiding this, please replace it. The old proof is commented in the LaTeX file. }
 One has that
\begin{multline}
    \label{aug10}
\int_{\rn \setminus B_R} (|F_1| +|F_{p,q}|)(|\nabla \varphi|* \rho_h)  \;dx 
\\  
\lesssim \|F_1\|_{L^1(\rn \setminus B_R,\rn)} \|\nabla \varphi\|_{L^\infty(\rn, \rn)} + \|F_{p,q}\|_{L^{p,q}(\rn \setminus B_R, \rn)} \|(|\nabla \varphi|\ast\rho_h)\chi_{\rn \setminus B_R}\|_{L^{p',q'}(\rn)}.
\end{multline}
The first addend on the right hand side of the inequality \eqref{aug10} tends to $0$ as $R\to \infty$, uniformly in $h$. As for the second one, recall that 
the convolution operator with kernel $\rho_h$ is bounded in $L^1(\rn)$ and $L^\infty (\rn)$, with norm not exceeding $1$. 
By an interpolation theorem of Calder\'on \cite[Theorem 2.12, Chapter 3]{BennettSharpley},
%\todo{add reference} 
%\todo[inline]{A: done}
it is also bounded in any rearrangement-invariant space, with norm independent of $h$. Thus,
$$\|F_{p,q}\|_{L^{p,q}(\rn \setminus B_R, \rn)} \|(|\nabla \varphi|\ast\rho_h)\chi_{\rn \setminus B_R}\|_{L^{p',q'}(\rn)} \lesssim \|F_{p,q}\|_{L^{p,q}(\rn \setminus B_R,\rn)}.$$
If $q<\infty$, the norm in the space $L^{p,q}(\rn, \rn)$ is absolutely continuous, and hence the latter inequality tells us that also the second addend on the right-hand side of \eqref{aug10} tends to $0$ as $R\to \infty$, uniformly in $h$.
Assume next that $q=\infty$, whence $q'=1$. We claim that
\begin{align}
    \label{aug12}
    \|(|\nabla \varphi|\ast\rho_h)\chi_{\rn \setminus B_R}\|_{L^{p',1}(\rn)} \lesssim \||\nabla \varphi| \chi_{\rn \setminus B_{R-1}}\|_{L^{p',1}(\rn)}
\end{align}
for $h\in \N$ and $R>1$. To verify this claim, fix any measurable set $E\subset \rn$ and observe that, since ${\rm \supp} \rho_h \subset B_1$ for $h\in \N$, an application of Fubini's theorem and the inequality 
$$\chi_{B_1}(y)\chi_{B_{2R}\setminus B_R}(x) \leq \chi_{B_{2R+1}\setminus B_{R-1}}(x-y) \quad \text{for $x,y \in \rn$}$$
imply that
\begin{align}
    \label{aug13}
    \int_E (|\nabla \varphi|\ast\rho_h)\chi_{\rn \setminus B_R}\, dx \leq \int_E (|\nabla \varphi|\chi_{\rn \setminus B_{R-1}})\ast \rho_h\, dx.
\end{align}
Hence, via equation \eqref{aug11} and the inequality \eqref{nov1}, one deduces that
\begin{align}
    \label{aug14}
    (|\nabla \varphi|\ast\rho_h)\chi_{\rn \setminus B_R})^{**} (s) \leq (|\nabla \varphi|\chi_{\rn \setminus B_{R-1}})^{**}(s) \quad \text{for $s>0$.}
\end{align}
The inequality \eqref{aug12} follows from \eqref{aug14}, via \eqref{hardy}.
\\ Thanks to \eqref{aug12},
\begin{align*}
    \|F_{p,q}\|_{L^{p,\infty}(\rn \setminus B_R,\rn)}& \|(|\nabla \varphi|\ast\rho_h)\chi_{\rn \setminus B_R}\|_{L^{p',1}(\rn)} \\ &\lesssim \|F_{p,q}\|_{L^{p,\infty}(\rn \setminus B_R,\rn)}  \||\nabla \varphi| \chi_{\rn \setminus B_{R-1}}\|_{L^{p',1}(\rn,\rn)}.
\end{align*}
The latter inequality and the absolute continuity of the norm in $L^{p',1}(\rn, \rn)$ imply that the second addend on the right-hand side of \eqref{aug10} tends to $0$ as $R\to \infty$, uniformly in $h$, also in this case. Thus, we have shown that the first term on the right-hand side of the inequality \eqref{aug8} is arbitrarily small, uniformly in $h$, provided that $R$ is large enough. 
\\ As far as the second term is concerned, we have that
\begin{align}
    \label{aug16}
    \frac 1R&  \int_{B_{2R} \setminus B_R} (|F_1| +|F_{p,q}|)|(|\varphi| *  \rho_h)\;dx
     \\ \nonumber & \lesssim \frac 1R\|F_{p,q}\|_{L^{p,q}( B_{2R} \setminus B_R)} \|(\varphi\ast\rho_h)\chi_{B_{2R} \setminus B_R}\|_{L^{p',q'}(\rn)}
     \\\nonumber& \quad +\frac 1R \|F_1\|_{L^1(B_{2R} \setminus B_R,\rn)}\|\varphi\|_{L^\infty(\rn)}
       \\ \nonumber & \lesssim  \|F_{p,q}\|_{L^{p,q}( B_{2R} \setminus B_R,\rn)} \|(\varphi\ast\rho_h)\chi_{B_{2R} \setminus B_R}\|_{L^{n'p',n'q'}(\rn)}\\
       & \quad +\frac 1R \|F_1\|_{L^1(B_{2R} \setminus B_R, \rn)}\|\varphi\|_{L^\infty(\rn)}.\nonumber
\end{align}
  Analogously to \eqref{aug14}, one has that
\begin{align}
    \label{aug14bis}
    (|\nabla \varphi|\ast\rho_h)\chi_{B_{2R} \setminus B_R})^{**} (s) \leq (|\nabla \varphi|\chi_{B_{3R} \setminus B_{R/2}})^{**}(s) \quad \text{for $s>0$,}
\end{align}
provided that $R>2$. Therefore,
the terms on the rightmost side of the inequality \eqref{aug16} can be treated similarly to those on the right-hand side of \eqref{aug10}.
 Altogether, thanks to the arbitrariness of $R$, equation \eqref{div_free} follows from   \eqref{aug15} and \eqref{aug8}.
\iffalse
However, for $k\in \mathbb{N}$ and $R>0$ sufficiently large we have, by a change of variables and enlarging the domain of integration,
\begin{align}\label{aug8}
\left|\int_{\mathbb{R}^n} F \cdot \nabla ((\varphi\ast \rho_h)(1-\eta))\;dx\right| &\leq \int_{B_{R/2}^c} (|F_1| +|F_{p,q}|)|\nabla \varphi|  \;dx \\ \nonumber
&\;+ \int_{B_{3R} \setminus B_{R/2}} (|F_1| +|F_{p,q}|)|\frac{|\varphi|}{R}\;dx
\end{align}
where $F = F_1 + F_{p,q}$ for some $F_1 \in L^1(\mathbb{R}^n)$ and $F_{p,q} \in L^{p,q}(\mathbb{R}^n)$. The limit for the first term on the right-hand-side tends to zero as $R \to \infty$ from the assumption $\nabla \varphi \in L^\infty(\mathbb{R}^n)\cap L^{p',q'}(\mathbb{R}^n)$ and Lebesgue's dominated convergence theorem. 

For the second term, the limit involving $|F_1|$ tends to zero by the same argument, using $\varphi \in L^\infty(\mathbb{R}^n)$.  For the remaining portion of the second term, by H\"older's inequality we have
\begin{align*}
\int_{B_{3R}\setminus B_{R/2}}|F_{p,q}| \frac{|\varphi|}{R}\;dx &\lesssim \frac{1}{R}\|F_{p,q}\|_{L^{p,q}(B_{R/2}^c)} \|\varphi\|_{L^{p',q'}(B_{2R}\setminus B_{R})}\\
&\lesssim \|F_{p,q}\|_{L^{p,q}(B_{R/2}^c)} \|\varphi\|_{L^{n'p',n'q'}(B_{3R}\setminus B_{R/2})}.
\end{align*}
As $\varphi \in L^{n'p',n'q'}(\mathbb{R}^n)$ and $F_{p,q} \in L^{p,q}(\mathbb{R}^n)$, the right-hand-side is finite, while the limit tends to zero as $R \to \infty$ by Lebesgue's dominated convergence theorem applied to the norm on either $L^{n'p',n'q'}(\mathbb{R}^n)$ or $L^{p,q}(\mathbb{R}^n)$, as either $q<+\infty$ or $q'<+\infty$.
\fi
\\ Now, 
for $\Phi \in C^\infty_c(\mathbb{R}^n,\mathbb{R}^n)$, set $\varphi =\operatorname*{div} (-\Delta)^{-1} \Phi$.  Then $\varphi \in C^1(\mathbb{R}^n)$ and
\begin{align*}
|\varphi(x)| \lesssim \frac{1}{(1+|x|)^{n-1}} \quad \text{and} \quad
|\nabla \varphi(x)| \lesssim \frac{1}{(1+|x|)^{n}} \quad \text{for $x\in \rn$,}
\end{align*}
whence $\varphi \in L^\infty(\mathbb{R}^n)\cap L^{n'p',n'q'}(\mathbb{R}^n)$ and $\nabla \varphi \in L^\infty(\mathbb{R}^n, \rn)\cap L^{p',q'}(\mathbb{R}^n, \rn)$.  Therefore, as shown above, equation \eqref{div_free} holds with this choice of $\varphi$, namely
\begin{align*}
\int_{\mathbb{R}^n} F \cdot \nabla \operatorname*{div} (-\Delta)^{-1} \Phi \;dx =0.
\end{align*}
%However, as  $\Phi \in C^\infty_c(\mathbb{R}^n;\mathbb{R}^n)$, 
%$\nabla \operatorname*{div} (-\Delta)^{-1} \Phi = T\Phi$ and so
Hence, by equation \eqref{aug17}, 
\begin{align*}
\int_{\mathbb{R}^n} F \cdot \Phi \;dx =\int_{\mathbb{R}^n} F \cdot (\Phi+\mathcal H\Phi) \;dx
=\int_{\mathbb{R}^n} F \cdot P\Phi \;dx =\int_{\mathbb{R}^n} PF \cdot \Phi \;dx,
\end{align*}
where the last equality follows via an integration by parts and Fubini's theorem. This proves equation \eqref{aug20}.
\\ Part (ii). We have to show that, 
if $F,PF \in L^1(\rn, \rn)$ or $F,PF \in L^{p,q}(\rn, \rn)$, then
\begin{align}\label{aug23}
\int_{\mathbb{R}^n} PF \cdot \nabla \varphi \;dx =0
\end{align}
for all $\varphi \in C^\infty_c(\mathbb{R}^n)$.  One has that
\begin{align}\label{aug22}
\int_{\mathbb{R}^n} PF \cdot \nabla \varphi \;dx &= \int_{\mathbb{R}^n} F \cdot \nabla \varphi \;dx+ \int_{\mathbb{R}^n} \mathcal H F \cdot \nabla \varphi \;dx.
\end{align}
The assumptions on $F,PF$ imply $ \mathcal H F \in L^1(\rn, \rn)$ or $ \mathcal H F\in L^{p,q}(\rn, \rn)$. Either of these  integrability properties  suffices to ensure that 
\begin{align*}
    \int_{\mathbb{R}^n} \mathcal H F \cdot \nabla \varphi \;dx &= \int_{\mathbb{R}^n}   F \cdot \mathcal H\, \nabla \varphi \;dx.
    %\\
    %&=-\int_{\mathbb{R}^n}   F \cdot \nabla \varphi \;dx,
\end{align*}
Combining the latter equality with 
 equations \eqref{aug18} and \eqref{aug22} yields \eqref{aug23}.
 \end{proof}

\begin{proof}[Proof of Theorem \ref{lemma2}] All functions appearing throughout this proof map $\rn$ into $\rn$. Since there will be no ambiguity, we drop the notation of the domain and the target in the function spaces. The  constants in the relations $\lq\lq \lesssim"$ and $\lq\lq \approx"$ only depend on $n, p, q$.
\\
    Recall that, according to the definition \eqref{Kfunct}, 
    \begin{align*}
        K(F,t,L^1,L^{p,q}) &= \inf \{ \|F_1\|_{L^1}+ t\|F_{p,q}\|_{L^{p,q} } :F=F_1+F_{p,q}\}\quad \text{for $t>0$}
        \end{align*}
        for $F\in L^1 +L^{p,q}$,
        and 
           \begin{align*}
         K(F,t,L^1_{\operatorname*{div}} ,L^{p,q}_{\operatorname*{div}} )  = \inf \{ \|F_1\|_{L^1 }+ t\|F_{p,q}\|_{L^{p,q} } : F=F_1+F_{p,q}, \operatorname*{div} F_1=\operatorname*{div}F_{p,q} =0 \}\quad \text{for $t>0$}
    \end{align*}
    for $F\in L^1_{\operatorname*{div}} +L^{p,q}_{\operatorname*{div}}$. 
    \\ Fix $t>0$ and $F \in L^1   +L^{p,q}$ such that $\operatorname*{div}F=0$. Thanks to a larger class of admissible decompositions in the computation of the $K$-functional on its left-hand side, 
 the inequality
    \begin{align*}
       K(F,t,L^1 ,L^{p,q} ) \leq K(F,t,L^1_{\operatorname*{div}},L^{p,q}_{\operatorname*{div}})
    \end{align*}
    is trivial.  To establish the first equivalence in \eqref{sep1}, it therefore remains to show that, up to a multiplicative constant, the reverse inequality also holds, namely:
    \begin{align}\label{sep5}
    K(F,t,L^1_{\operatorname*{div}},L^{p,q}_{\operatorname*{div}})  \lesssim K(F,t,L^1,L^{p,q}).
    \end{align}
By scaling, we may assume
\begin{align}\label{sep7}
    K(F,t,L^1,L^{p,q}) =1,
    \end{align}
and then we must show that
\begin{align}\label{sep6}
   K(F,t,L^1_{\operatorname*{div}},L^{p,q}_{\operatorname*{div}}) \lesssim 1.
    \end{align}

In order to prove the inequality \eqref{sep6} under \eqref{sep7}, 
consider any decomposition  $F=F_1+F_{p,q}$  of $F$   such that
\begin{align*}
   \|F_1\|_{L^1}+ t\|F_{p,q}\|_{L^{p,q}} \leq 2.
\end{align*}
The Calder\'on-Zygmund decomposition \cite[Theorem 5.3.1 on p.~355]{grafakos} of $F_1$, with $\lambda = t^{-p'}$, yields $$F_1=H+K$$ for some functions $H, K \in L^1$ such that:
\begin{align*}
|H| &\leq \lambda ,\\
\|H\|_{L^1} &\leq \|F_1\|_{L^1} \leq 2,
\end{align*}
and $$K=\sum_i K_i$$ for some functions $K_i\in L^1$ satisfying, for suitable balls $B_i\subset\R^n$,
\begin{align*}
\operatorname*{supp} K_i &\subset B_i,
\\ \int_{B_i} K_i \,dx&= 0,\\
\sum_i |B_i| &\lesssim \|F_1\|_{L^1}
\lambda^{-1} \leq 2\lambda^{-1},\\
\sum_i \|K_i\|_{L^1} &\lesssim \|F_1\|_{L^1} \leq 2.
\end{align*}
By Lemma  \ref{lemma1}, Part (i), we have that $F=PF$. Therefore,
\begin{align}\label{aug1}
F= PF = P(H+F_{p,q})+PK.
\end{align}
If we 
 show that $PK \in L^1, P
(H+F_{p,q})\in L^{p,q}$, and 
\begin{align}\label{final_claim}
\|PK\|_{L^1} + t\|P(H+F_{p,q})\|_{L^{p,q}} \lesssim 1,
\end{align}
then we can conclude that 
\eqref{aug1} is an admissible decomposition for the $K$-functional for the couple $(L^1_{\operatorname*{div}} ,L^{p,q}_{\operatorname*{div}})$, since, by  Lemma  \ref{lemma1}, Part (ii),
$${\operatorname*{div}}PK=0 \quad \text{and} \quad {\operatorname*{div}} P
(H+F_{p,q})=0.$$
Hence \eqref{sep6} will follow via \eqref{final_claim}.
\iffalse

Indeed,  this gives a decomposition of $F$ in divergence free spaces, since, once we have shown the inequality \eqref{final_claim}, Lemma  \ref{lemma2}, Part (ii) implies that
$${\operatorname*{div}}PK=0 \quad \text{and} \quad {\operatorname*{div}} P
(H+F_{p,q})=0.$$

$PK,P(H+F_{p,q})$ are divergence free in the sense of distributions.   Moreover, Part (i) of the same lemma tells us that
$F=PF$. Therefore,
\begin{align}\label{aug1}
F= PF = P(H+F_{p,q})+PK
\end{align}
is an admissible decomposition of $F$ in the computation of $ K(F,t,L^1_{\operatorname*{div}},L^{p,q}_{\operatorname*{div}})$. 
\fi
\\ To complete the proof, it thus only remains
to prove the bound \eqref{final_claim}.
%
%To this end, we observe that the invariance of $F$ under $P$ implies
%\begin{align*}
%F= F_1+F_{p,q} = (H+F_{p,q})+K = P(H+F_{p,q})+P(K).
%\end{align*}
Concerning the second addend on the left-hand side of  \eqref{final_claim}, by the boundedness of $\mathcal H$, and hence of $P$, on 
 $L^{p,q}$
%
 %bounds for the Calder\'on-Zygmund operator $P$ \cite[Theorem 1.4.19 on p.~61 and Theorem 5.2.7 on p.~339]{grafakos} 
 and the inequality \eqref{aug3},
one has
\begin{align}\label{aug2}
\|P(H+F_{p,q})\|_{L^{p,q}} &  \lesssim \|H\|_{L^{p,q}}+  \|F_{p,q}\|_{L^{p,q}}\\ \nonumber
&\lesssim \|H\|_{L^1}^{1/p}\|H\|_{L^\infty}^{1/p'}+\|F_{p,q}\|_{L^{p,q}}\\ \nonumber
&\lesssim \lambda^{1/p'} +t^{-1}\\ \nonumber
&\approx t^{-1}.
\end{align}
%This completes the proof of the desired bound for the second addend.
Turning our attention to the bound for the first addend on the right-hand side of \eqref{final_claim}, define $\Omega = \cup_i B_i^*$, where $B_i^{*}$ is the ball with the same center as $B_i$ with twice the radius.  Then,
\begin{align*}
\|PK\|_{L^1} = \|PK \chi_\Omega\|_{L^1} + \|PK \chi_{\Omega^c}\|_{L^1}.
\end{align*}
Inasmuch as $\mathrm{supp} \,K_i\subset B_i$ and
 the kernel of the operator $\mathcal H$  satisfies H\"ormander's condition \eqref{hormander}, by \cite[Inequality (2.13)]{Bourgain} 
$$ \sum_i \|PK_i \chi_{(B_i^*)^c}\|_{L^1}  =\sum_i \|\mathcal H K_i \chi_{(B_i^*)^c}\|_{L^1}  \lesssim \sum_i \|K_i\|_{L^1}.$$
Hence,
\iffalse
 
\footnote{For multipliers which satisfy Mihlin's condition this is argued after (6.2.20) on p.~448 of \cite{grafakos}.}, and so as in the argument of Bourgain's Lemma 2.4 we have
\todo[inline]{A: I would mention and explicitly recall these properties when we introduce $T$. I would also repeat  Bourgain's argument}
\fi
\begin{align}\label{aug7}
\|PK \chi_{\Omega^c}\|_{L^1} &\leq \sum_i \|PK_i \chi_{(B_i^*)^c}\|_{L^1}  \lesssim \sum_i \|K_i\|_{L^1} \lesssim \|F\|_{L^1}.
\end{align}
Since $F=H+K+F_{p,q}$, from equation \eqref{aug1} we deduce that
\begin{align*}
PK = K+F_{p,q}+H - P(F_{p,q}+H).
\end{align*}
Therefore,   the boundedness of $P$ on $L^{p,q}(\rn)$, the  H\"older type  inequality in the Lorentz spaces \eqref{holderlor}, and the fact that $\|\chi_{\Omega}\|_{L^{p',q'}}$ is independent of $q$ yield
\begin{align*}
\|PK\chi_\Omega\|_{L^1} &\leq \|K\|_{L^1} + |\Omega|^{1/p'} \|F_{p,q}+H\|_{L^{p,q}}\\
&\lesssim \|F_1\|_{L^1} + |\Omega|^{1/p'} t^{-1} \lesssim 1 + \lambda^{-1/p'}t^{-1}  \approx 1
\end{align*}
by our choice of $\lambda$.  This completes the proof of the bound \eqref{final_claim} and thus also the proof of the first equivalence in \eqref{sep1}.
\\ The second equivalence holds thanks to Holmsted's formulas \cite[Theorem 4.1]{Hol}.

\begin{proof}[Proof of Theorem \ref{K-CZ}, case $k=1$] 
Throughout this proof, the constants in the relations \lq\lq $\approx$"
 and \lq\lq $\lesssim$" depend only on $\frac n\alpha$ and $k$.
Observe that   $F \in  L^1 (\rn, \mathbb{R}^n)+ L^{\frac n\alpha, 1} (\rn, \mathbb{R}^n)$ if and only if 
$$\int_0^{t} F^*(s)\;ds + t^{1-\alpha/n} \int_{t}^\infty  s^{-1+\frac \alpha n} F^*(s)\;ds<\infty \quad \text{for $t>0$.}$$
This is a consequence  Holmsted's formulas -- see the second equivalence in \eqref{sep1}.
On the other hand, an application of Fubini's theorem tells us that 
\begin{align}
    \label{nov130}
     \int_0^{t} F^*(s)\;ds + t^{1-\alpha/n} \int_{t}^\infty  s^{-1+\frac \alpha n} F^*(s)\;ds  =   \frac {n-\alpha}n \int_0^{t} s^{-\alpha/n}   \int_{s}^\infty r^{-1+\alpha/n}F^*(r)\;drds
\end{align}
for $t>0$.
%Hence, we may assume that $F \in  L^1 (\rn, \mathbb{R}^N)+ L^{\frac n\alpha, 1} (\rn, \mathbb{R}^N)$, otherwise the right-hand side of the inequality \eqref{nov100}  is infinite, and the inequality holds trivially.
\\ 
    From \cite[Theorem 1.1]{HS} one has that
 \begin{equation}\label{dec30}
 I_\alpha : L^1_{\operatorname*{div}}(\rn, \R^n) \to L^{\frac n{n-\alpha}, 1}(\rn, \R^n),
 \end{equation}
  with norm depending on $n$ and $\alpha$.  
\\ On the other hand,
\begin{align}\label{cocanc1}
 I_\alpha : L^{\frac n\alpha, 1} (\rn, \rn) \to L^{\infty}(\rn, \rn)
\end{align}
Let $F \in  L^1(\rn, \rn)+ L^{\frac n\alpha, 1}(\rn, \rn)$  be such that ${\rm div}F=0$ row-wise.   By Theorem \ref{lemma2}, such a function $F$ admits a decomposition $F=F_1 + F_{n/\alpha,1}$, with $F_1\in L^{1}_{\rm div}(\rn, \rn)$ and $F_{n/\alpha,1}\in  L^{\frac n\alpha, 1}_{\rm div}(\rn, \rn)$ fulfilling the estimate:
\begin{align}
    \label{nov134}
    \|F_1\|_{L^{1}(\rn, \rn)}+ t \|F_{n/\alpha,1}\|_{L^{\frac n\alpha, 1}(\rn, \rn)} \lesssim  \int_0^{t} F^*(s)\;ds + t^{1-\alpha/n} \int_{t}^\infty  s^{-1+\frac \alpha n} F^*(s)\;ds
\end{align}
for $t>0$.
%\\ Equations \eqref{nov110} and \eqref{nov111} ensure that
%$\sum_{|\beta|=k} I_\alpha T^\beta F^\beta_1 \in L^{\frac{n}{n-\alpha}, 1}(\rn,\R^l)$ and $\sum_{|\beta|=k}  I_\alpha T^\beta F^\beta_{n/\alpha,1} \in L^\infty(\rn,\R^l)$.
Therefore,
\begin{align}
   \nonumber
    K\big( I_\alpha F, t;   L^{\frac{n}{n-\alpha}, 1}(\rn,\R^n),  L^\infty(\rn, \R^n)\big)   \lesssim \|F_1\|_{L^{1}(\rn, \rn)}+ t \|F_{n/\alpha}\|_{L^{\frac n\alpha, 1}(\rn, \rn)}.
    \label{nov132}
\end{align}
 \iffalse
\todo[inline]{A: \eqref{dec32prime} 
it is not straightforward. We have to use the special decomposition from \eqref{aug1} for a function such that $div F=0$, where $F=F_1+F_pq$, with $div F_1=0$ and $div F_{pq} =0$.}
 
The property \eqref{dec37} of the $K$-functional gives
\begin{equation}\label{dec32prime}
K\big({\color{red}I_\alpha T QF}, t;   L^{\frac{n}{n-\alpha}, 1}(\rn, \om),  L^\infty(\rn, \om)\big)
\leq c K\big(F, t/c;   L^1_{\operatorname*{div_k}}(\rn, \om),   L^{\frac n\alpha, 1}_{\operatorname*{div_k}}(\rn, \om)\big),
\end{equation}
for some positive constant  $c=c(n,\alpha, k)$ and for every $F \in  L^1_{\operatorname*{div_k}}(\rn, \mathbb{R}^n)+ L^{\frac n\alpha, 1}_{\operatorname*{div_k}}(\rn, \mathbb{R}^n)$.  
\fi
Thanks to \cite[Corollary 2.3, Chapter 5]{BennettSharpley}, 
\begin{equation}\label{dec33}
K\big( G, t;   L^{\frac{n}{n-\alpha}, 1}(\rn,\R^n),  L^\infty(\rn, \R^n)\big) \approx \int_0^{t^{\frac n{n-\alpha}}} s^{-\frac{\alpha}{n}}G^*(s) \, ds \quad\text{for $t>0$,}
\end{equation} 
for $G\in L^{\frac n{n-\alpha}, 1}(\rn, \rn)+  L^\infty(\rn, \rn)$. 
\\ Combining equations \eqref{nov134}--\eqref{dec33}   with \eqref{nov130} yields the inequality \eqref{nov100}.
\end{proof}

\iffalse

the well-known formula for the $K$-functional for the spaces $L^1(\rn, \rM)$ and $L^\infty(\rn, \rM)$ and of the
reiteration theorem for $K$-functionals  \cite[Corollary 3.6.2]{BerghLofstrom}
\fi
\end{proof}

\section{Proofs of the main results}\label{proof}

The proof of Theorem \ref{sobolev} is reduced to the case of $k$-th order divergence free vector fields thanks to the following lemma.
%It enables one to transfer the information contained in any Riesz potential estimate for $k$-th order divergence free vector fields to a parallel estimate for every $k$-th order co-canceling operator.

\begin{lemma}\label{reductionk}
Let $n, m, k \in \N$, with $m,n \geq 2$, $ k \in \mathbb{N}$, $\alpha \in (0,n)$,     and let $N$ be defined by \eqref{N}. Assume that $\|\cdot\|_{X(0,\infty)}$ and $\|\cdot\|_{Y(0,\infty)}$ are rearrangement-invariant function norms and   let $\mathcal L$ be any linear homogeneous $k$-th order  co-canceling differential operator.
Suppose that there exists a constant $c_1$ such that
\begin{align}\label{div_k_free_estimate}
    \|I_\alpha F\|_{Y(\mathbb R^n, \om)} \leq c_1 \|F\|_{X(\rn, \om)}
\end{align}
for all $F \in X_{\operatorname*{div}_k}(\rn, \om)$.  Then,
\begin{align}\label{L_k__free_estimate}
    \|I_\alpha F\|_{Y(\mathbb R^n, \mathbb R^m)} \leq c_2 \|F\|_{X(\rn, \mathbb R^m)}
\end{align}
for some constant $c_2=c_2(c_1, \mathcal L)$ and all $F \in X_{\mathcal{L}}(\rn, \mathbb R^m)$.
\end{lemma}

\begin{proof}
 Let $\mathcal L(D)$ be as in Definition \ref{co-canc}. Fix any function $F\in X_{\mathcal L}(\rn, \rM)$.  As in \cite[Lemma 2.5]{VS3}, one has that
 $$
 \mathcal L(D)F=\sum_{\beta \in \mathbb N^n, |\beta|=k} L_\beta\partial^\beta F=\sum_{\beta \in \mathbb N^n, |\beta|=k} \partial^\beta (L_\beta F)   =0
 %\vec{0},
 $$ 
 for  suitable linear maps $L_\beta \in\operatorname{Lin}(\R^m, \R^l)\simeq\R^{l\times m}$ independent of $F$, and suitable  $l \in \mathbb N$.
 %, where $\vec{0}$ is the zero vector in $\mathbb{R}^l$.   
 In analogy with the previous section, write $LF = [L_\beta F]_{|\beta|=k} \in \operatorname{Lin} (\mathbb{R}^n, \R^{N\times l})$ for the collection of $l$ maps with values in $\R^{N}$.  In particular, one can regard $[L_\beta F]$ as $l$ rows  $\{(L_\beta F)_i\}_{i=1}^l$ such that $\operatorname*{div}_k (L_\beta F)_i=0$ for each $i=1,\ldots,l$. 
\\
Owing to \cite[Lemma 2.5]{VS3} there exist a family of maps $K_\beta \in \operatorname{Lin}(\mathbb{R}^l, \mathbb{R}^m)$ such that
\begin{align*}
F = \sum_{|\beta|=k} K_\beta L_\beta F.
\end{align*}
Hence,
\begin{align*}
|I_\alpha F| = \left|\sum_{|\beta|=k} K_\beta I_\alpha L_\beta F\right| \lesssim \sum_{|\beta|=k} \left|I_\alpha L_\beta F\right| \lesssim {\sum_{|\beta|=k}} \sum_{i=1}^l \left|I_\alpha (L_\beta F)_i\right|.
\end{align*}
Thus, by the property (P2) of the function norm $Y(0,\infty)$, one has
\begin{align*}
    \|I_\alpha F\|_{Y(\mathbb R^n, \mathbb R^m)} \leq c{\sum_{|\beta|=k}}\sum_{i=1}^l  \|I_\alpha (L_\beta F)_i\|_{Y(\rn, \mathbb R^{{N}})}.
\end{align*}
An application of the inequality \eqref{div_k_free_estimate} for each $i=1,\ldots,l$
yields
\begin{align}\label{oct7}
  \|I_\alpha (L_\beta F)_i\|_{Y(\rn, \mathbb R^{{N}})} \leq c'  \|  (L_\beta F)_i\|_{X(\rn, \mathbb R^{N})}
\end{align}
for some constant $c'=c'(c_1, \mathcal L)$.
Since each of the linear maps $L_\beta$ is bounded, the inequalities \eqref{oct7} and the  property (P2) of the rearrangement-invariant function norm $X(0, \infty)$ yield 
  \eqref{L_k__free_estimate} for an arbitrary $k$-th order co-canceling operator $\mathcal L$.
\end{proof}

 The following 
 result from \cite[Proof of Theorem A]{KermanPick} (see also \cite[Proof of Theorem 4.1]{CPS_Frostman} for an alternative simpler proof) is needed
  to combine the information contained in the inequality \eqref{nov100} with the assumption \eqref{bound1}.
 % It enables one 
  %to transfer the information contained in a pointwise  inequality  between Hardy type operators applied to two functions satisfying the inequality \eqref{bound1} for some rearrangement-invariant function norms $\|\cdot\|_{X(0,\infty)}$ and $\|\cdot\|_{Y(0,\infty)}$ into 
% an inequality between the same  norms of the two functions.

 \begin{theoremalph}\label{KermanPick} Let $n\in\mathbb N$ and $\alpha\in(0,n)$. Let $\|\cdot\|_{X(0,\infty)}$ and $\|\cdot\|_{Y(0,\infty)}$ be  rearrangement-invariant function norms such that the inequality \eqref{bound1} holds. Suppose that the functions $f, g \in \mathcal M (0, \infty)$ are such that
\begin{align}\label{july21}
\int_0^t  s^{-\frac{\alpha}{n}} g^*(s)\, ds
\leq c\int_0^{t}  s^{-\frac{\alpha}{n}} \int_{s/c}^{\infty} f^*(r)r^{-1+\alpha/n}\,drds  \quad\text{for $t>0$,}
\end{align}
for some positive constant   $c$.  Then
\begin{align}\label{july22}
\|g\|_{Y(0, \infty)} \leq c' \|f\|_{X(0, \infty)},
\end{align}
for a suitable constant $c'=c'(c, \frac n\alpha)$. 
 \end{theoremalph}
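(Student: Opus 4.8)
The plan is to pare the hypothesis \eqref{july21} down to a one-dimensional $K$-functional inequality and then transfer it through the operator occurring in \eqref{bound1}.

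\emph{Normalising the hypothesis.} Assume, as we may, that $c\ge1$. The change of variable $s\mapsto cs$ in the outer integral on the right-hand side of \eqref{july21}, together with the monotonicity of the inner integral in its lower endpoint, turns \eqref{july21} into
\[
\int_0^t s^{-\frac{\alpha}{n}}g^*(s)\, ds \;\lesssim\; \int_0^t s^{-\frac{\alpha}{n}}\Phi(s)\, ds \qquad\text{for every }t>0,
\]
where $\Phi(s):=\int_s^\infty r^{-1+\frac\alpha n}f^*(r)\, dr$ is nonincreasing, and $\|\Phi\|_{Y(0,\infty)}\le c_1\|f\|_{X(0,\infty)}$ by \eqref{bound1}. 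Interchanging the order of integration on the right-hand side, the displayed inequality reads equivalently
\[
\int_0^t s^{-\frac{\alpha}{n}}g^*(s)\, ds \;\lesssim\; \int_0^t f^*(s)\, ds + t^{1-\frac\alpha n}\int_t^\infty r^{-1+\frac\alpha n}f^*(r)\, dr \qquad\text{for every }t>0.
\]
By Holmstedt's formula \cite[Theorem 4.1]{Hol}, after the reparametrisation $\rho=t^{(n-\alpha)/n}$ the left-hand side is equivalent to $K\big(g,\rho;L^{\frac{n}{n-\alpha},1}(0,\infty),L^\infty(0,\infty)\big)$ and the right-hand side to $K\big(f,\rho;L^1(0,\infty),L^{\frac n\alpha,1}(0,\infty)\big)$. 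A Fubini computation gives $\|\Phi\|_{L^{\frac{n}{n-\alpha},1}(0,\infty)}=\tfrac{n}{n-\alpha}\|f\|_{L^1(0,\infty)}$ and $\|\Phi\|_{L^\infty(0,\infty)}=\|f\|_{L^{\frac n\alpha,1}(0,\infty)}$, so, since $\Phi=\Phi^*$, the same Holmstedt formula makes the second $K$-functional equivalent to $K\big(\Phi,\rho;L^{\frac{n}{n-\alpha},1}(0,\infty),L^\infty(0,\infty)\big)$. Summing up, the hypothesis is equivalent to
\[
K\big(g,\rho;L^{\frac{n}{n-\alpha},1}(0,\infty),L^\infty(0,\infty)\big)\;\lesssim\;K\big(\Phi,\rho;L^{\frac{n}{n-\alpha},1}(0,\infty),L^\infty(0,\infty)\big)\qquad\text{for all }\rho>0.
\]

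\emph{Transferring to $Y$.} The validity of \eqref{bound1} for \emph{some} rearrangement-invariant $Y$ already forces the condition \eqref{may8}, so the optimal rearrangement-invariant target space $X_\alpha$ for \eqref{bound1} is well defined, $\|\Phi\|_{X_\alpha(0,\infty)}\le c_1\|f\|_{X(0,\infty)}$, and $X_\alpha(0,\infty)\hookrightarrow Y(0,\infty)$; see \cite[Theorem~4.4]{EMMP}. Hence it suffices to show $\|g\|_{X_\alpha(0,\infty)}\lesssim\|\Phi\|_{X_\alpha(0,\infty)}$. Now $X_\alpha$ is an intermediate space for the couple $\big(L^{\frac{n}{n-\alpha},1}(0,\infty),L^\infty(0,\infty)\big)$ — the embeddings $L^{\frac{n}{n-\alpha},1}\cap L^\infty\hookrightarrow X_\alpha\hookrightarrow L^{\frac{n}{n-\alpha},1}+L^\infty$ are read off from the definition of $X_\alpha$ via \eqref{may7} and \eqref{may8} — and this couple is a Calderón couple, so $X_\alpha$ is $K$-monotone with respect to it; therefore the $K$-functional inequality just obtained yields $\|g\|_{X_\alpha}\lesssim\|\Phi\|_{X_\alpha}$. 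Combining this with $\|\Phi\|_{X_\alpha}\le c_1\|f\|_X$ proves \eqref{july22}. This transfer step — and the verification that admissible targets are intermediate spaces for the relevant Lorentz couple — is exactly what is carried out in \cite{KermanPick}, with a simpler route in \cite{CPS_Frostman}.

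\emph{Where the obstacle is.} The crucial point is that the hypothesis must be used in its integrated form. Were the companion dual Hardy inequality \eqref{bound1dual} at our disposal, we could first deduce the pointwise bound $g^*(s)\lesssim s^{\frac\alpha n}f^{**}(s)+\Phi(s)$ — this follows from $g^*(s)\lesssim s^{-1+\frac\alpha n}\int_{s/2}^s\tau^{-\frac\alpha n}g^*(\tau)\, d\tau$, valid because $g^*$ is nonincreasing, together with the normalised hypothesis — and then estimate the two summands separately in $Y$. But controlling $\|s^{\frac\alpha n}f^{**}(s)\|_{Y(0,\infty)}$ is precisely \eqref{bound1dual}, which fails for domain spaces $X$ near $L^1$ — the very regime in which Theorem \ref{sobolev} has content. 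So one cannot afford to see $g^*$ pointwise, and the weight of the argument rests on the interpolation-theoretic transfer, which only uses $g$ through its $K$-functional.
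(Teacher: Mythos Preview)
The paper does not prove this statement itself --- Theorem~E is quoted from \cite[Proof of Theorem~A]{KermanPick}, with a simpler variant in \cite[Proof of Theorem~4.1]{CPS_Frostman}. Your outline follows the same route as those references: recast \eqref{july21} as the $K$-functional inequality $K(g,\rho;L^{\frac{n}{n-\alpha},1},L^\infty)\lesssim K(\Phi,\rho;L^{\frac{n}{n-\alpha},1},L^\infty)$ and then transfer it to the target norm. Your normalisation and the identification of both sides with $K$-functionals are correct, and your final paragraph rightly explains why the pointwise bound $g^*(t)\lesssim t^{\alpha/n}f^{**}(t)+\Phi(t)$ cannot close the argument without \eqref{bound1dual}.

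The gap is in the transfer step. You assert that $X_\alpha$ is \emph{intermediate} for the couple and that the couple is Calder\'on, and conclude that $X_\alpha$ is $K$-monotone. But the Calder\'on property guarantees $K$-monotonicity only for \emph{interpolation} spaces of the couple; being intermediate is strictly weaker and does not by itself suffice. What has to be checked --- and what the cited references actually do --- is that any $Y$ satisfying \eqref{bound1} (equivalently, its optimal instance $X_\alpha$) is $K$-monotone for this couple. That verification uses \eqref{bound1} itself, through its dual form $\|s^{\alpha/n}h^{**}(s)\|_{X'}\le c_1\|h\|_{Y'}$, rather than abstract Calder\'on-couple machinery. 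So your closing citation is supplying the core of the argument, not merely confirming one already complete; the strategy is right, but the inference ``intermediate $+$ Calder\'on $\Rightarrow$ $K$-monotone'' is a non sequitur and should be replaced by that direct verification (or by an explicit proof that $X_\alpha$ is an interpolation space for the couple).
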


%\subsection{Riesz potential estimates under co-canceling constraints}
We are now in a position to accomplish the proof of Theorem \ref{sobolev}.

 \begin{proof}[Proof of Theorem \ref{sobolev}]  
 To begin with, as observed with regard to the condition \eqref{may8}, such a condition is necessarily fulfilled
 if 
 the inequality \eqref{bound1}
holds for some rearrangement-invariant function norms $\|\cdot\|_{X(0,\infty)}$ and $\|\cdot\|_{Y(0,\infty)}$. Hence, thanks to   the H\"older type inequality \eqref{holder}, if 
 $F\in X(\rn, \rM)$, then
 \begin{align}
     \label{dec24-1}
     %\int_{\rn}|F(x)|(1+|x|)^{-\alpha+n}\, dx & \lesssim 
     \int_0^\infty F^*(s)(1+s)^{-1+\frac \alpha n}\, ds & \leq \|F^*\|_{X(0, \infty)}\|(1+s)^{-1+\frac \alpha n}\|_{X'(0,\infty)}
     \\ \nonumber & = \|F\|_{X(\rn, \rM)}\|(1+s)^{-1+\frac \alpha n}\|_{X'(0,\infty)}<\infty.
 \end{align}
 The finiteness of the leftmost side of the chain \eqref{dec24-1}
implies that $F\in L^1(\rn, \rM)+ L^{\frac n\alpha , 1}(\rn, \rM)$. 
 \\ Next, as a first step, we  consider that case when $$\mathcal L = \operatorname*{div_k}.$$ 
 Namely, we assume that the inequality \eqref{bound1} holds and we shall prove that the inequality \eqref{bound2} is satisfied for all
    $F\in X_{\operatorname*{div_k}}(\rn, \om)$.
    By Theorem \ref{K-CZ}, 
   one has that
\begin{align}\label{dec36}
 \int_0^{t} s^{-\frac{\alpha}{n}}(I_\alpha  F)^*(s) \, ds \leq c  \int_0^{t} s^{-\alpha/n}   \int_{s/c}^\infty r^{-1+\alpha/n}F^*(r)\;drds  \qquad \text{for $t>0$,}
\end{align}
for some positive constant $c=c(n, \alpha, k)$. The inequality \eqref{bound2}
%for  $F\in  L^1_{\operatorname*{div}}(\rn) +  L^{\frac n\alpha, 1}_{\operatorname*{div}}(\rn)$ then 
follows from \eqref{dec36}, via Theorem \ref{KermanPick}.  Thereby,  we have shown that
\begin{align}\label{sep15}
    \|I_\alpha F\|_{Y(\mathbb R^n, \om)} \leq c \|F\|_{X(\rn, \om)}
\end{align}
for some constant $c=c(n, \alpha, k)$ and every $F\in X_{\operatorname*{div_k}}(\rn, \om)$. 
 The inequality \eqref{bound2} for $F\in X_{\mathcal L}(\rn, \rM)$, where $\mathcal{L}$ is any  linear homogeneous $k$-th order  co-canceling operator, is a consequence of \eqref{sep15} and of Lemma \ref{reductionk}.
 \end{proof}

\iffalse 

\begin{proof}[Proof of Theorem \ref{sobolev-opt}] Thanks to \cite[Theorem~4.4]{EMMP}, the inequality \eqref{bound1} holds with $Y(0,\infty)= X_\alpha (0, \infty)$. Hence, the inequality \eqref{bound-opt} follows from an application of Theorem \ref{sobolev}.
\end{proof}

\fi

\begin{proof}[Proof of Corollary \ref{sobolev-domain}] Assume that the inequality \eqref{bound1-domain} holds.  We claim that 
\begin{equation}\label{sep27}
\bigg\|\int_s^\infty r^{-1+\frac \alpha n}f(r)\, dr \bigg\|_{Y^e(0,\infty)} \leq c_1 \|f\|_{X^e(0, \infty)}
\end{equation}
for every $f \in \Mpl(0,\infty)$ with ${\rm supp} f \subset [0, |\Omega|]$, where $X^e(0, \infty)$ and $Y^e(0, \infty)$ denote the extended function norms defined as in \eqref{sep25}. Indeed, the inequality \eqref{sep27} can be verified via the following chain:
\begin{align}
    \label{sep28}
    \bigg\|\int_s^\infty r^{-1+\frac \alpha n}f(r)\, dr \bigg\|_{Y^e(0,\infty)} & = 
    \bigg\|\bigg(\chi_{[0,|\Omega|]}(\cdot)\int_{(\cdot)}^\infty r^{-1+\frac \alpha n}f(r)\, dr \bigg)^*(s)\bigg\|_{Y^e(0,\infty)}
    \\ \nonumber & = \bigg\| \int_{s}^{|\Omega|} r^{-1+\frac \alpha n}f(r)\, dr  \bigg\|_{Y(0,|\Omega|)}
    \\ \nonumber & \leq c_1 \|f\|_{X(0, |\Omega|)} = c_1\|f^*\|_{X(0, |\Omega|)} = c_1\|f\|_{X^e(0, \infty)}.
\end{align}
Now, assume that $F\in X_{\mathcal L} (\rn, \rM)$ is such that $F=0$ a.e. in $\rn \setminus \Omega$. An application of Theorem \ref{sobolev} tells us that 
\begin{equation}\label{sep32}
\|I_\alpha F\|_{Y^e(\mathbb R^n, \mathbb R^m)} \leq c_2 \|F\|_{X^e(\rn, \mathbb R^m)}
\end{equation}
for some constant $c_2=c_2(c_1, n, \alpha)$.
On the other hand,
\begin{align}
    \label{sep30}
    \|F\|_{X^e(\rn, \rM)}= \|F^*\|_{X^e(0, \infty)}= \|F^*\|_{X(0, |\Omega|)} = \|F\|_{X(\Omega, \rM)},
\end{align}
and
\begin{align}
    \label{sep31}
    \|I_\alpha F\|_{Y^e(\rn, \rM)}=\|(I_\alpha F)^*\|_{Y^e(0, \infty)}
    = \|(I_\alpha F)^*\|_{Y(0,|\Omega|)}\geq 
    \|(\chi_\Omega I_\alpha F)^*\|_{Y(0,|\Omega|)}= \|I_\alpha F\|_{Y(\Omega, \rM)}.
\end{align}
Combining equations \eqref{sep32}--\eqref{sep31} yields \eqref{bound2-domain}.
\end{proof}

\bigskip

\bigskip{}{}

 \par\noindent {\bf Data availability statement.} Data sharing not applicable to this article as no datasets were generated or analysed during the current study.

\section*{Compliance with Ethical Standards}\label{conflicts}

\smallskip
\par\noindent
{\bf Funding}. This research was partly funded by:
\\
(i) Grant BR 4302/3-1 (525608987) by the German Research Foundation (DFG) within the framework of the priority research program SPP 2410 (D. Breit);\\
(ii) Grant BR 4302/5-1 (543675748) by the German Research Foundation (DFG) (D. Breit);
\\ (iii) GNAMPA   of the Italian INdAM - National Institute of High Mathematics (grant number not available)  (A. Cianchi);
\\ (iv) Research Project   of the Italian Ministry of Education, University and
Research (MIUR) Prin 2017 ``Direct and inverse problems for partial differential equations: theoretical aspects and applications'',
grant number 201758MTR2 (A. Cianchi);
\\ (v) Research Project   of the Italian Ministry of Education, University and
Research (MIUR) Prin 2022 ``Partial differential equations and related geometric-functional inequalities'',
grant number 20229M52AS, cofunded by PNRR (A. Cianchi);
\\  (vi) National Science and Technology Council of Taiwan research grant numbers 110-2115-M-003-020-MY3/113-2115-M-003-017-MY3 (D. Spector);
\\ (vii) Taiwan Ministry of Education under the Yushan Fellow Program (D. Spector).

\bigskip
\par\noindent
{\bf Conflict of Interest}. The authors declare that they have no conflict of interest.

\begin{bibdiv}

\begin{biblist}

\bib{ACPS}{article}{
  author={Alberico, Angela},
  author={Cianchi, Andrea},
  author={Pick, Lubos},
  author={Slav\'ikov\'a, Lenka},
  title={Fractional Orlicz-Sobolev embeddings},
  journal={J. Math. Pures Appl.},
   volume={149},
  date={2021},
  number={7},
  pages={539--543},
%issn={1631-073X},
  %doi={10.1016/j.crma.2003.12.031},
}

\bib{ACPS_NA}{article}{
   author={Alberico, Angela},
   author={Cianchi, Andrea},
   author={Pick, Lubo\v{s}},
   author={Slav\'{\i}kov\'{a}, Lenka},
   title={Boundedness of functions in fractional Orlicz-Sobolev spaces},
   journal={Nonlinear Anal.},
   volume={230},
   date={2023},
   pages={Paper No. 113231, 26},
   issn={0362-546X},
   review={\MR{4551936}},
   doi={10.1016/j.na.2023.113231},
}

\bib{BennettSharpley}{book}{
   author={Bennett, Colin},
   author={Sharpley, Robert},
   title={Interpolation of operators},
   series={Pure and Applied Mathematics},
   volume={129},
   publisher={Academic Press, Inc., Boston, MA},
   date={1988},
   pages={xiv+469},
   isbn={0-12-088730-4},
   review={\MR{928802}},
}

% \bib{BerghLofstrom}{book}{
%    author={Bergh, J\"{o}ran},
%    author={L\"{o}fstr\"{o}m, J\"{o}rgen},
%    title={Interpolation spaces. An introduction},
%    series={Grundlehren der Mathematischen Wissenschaften},
%    volume={No. 223},
%    publisher={Springer-Verlag, Berlin-New York},
%    date={1976},
%    pages={x+207},
%    review={\MR{0482275}},
% }

\bib{Bourgain}{article}{
  author={Bourgain, Jean},
  title={Some consequences of Pisier's approach to interpolation},
  journal={Israel Journal of Mathematics},
   volume={77},
  date={1992},
  pages={165--185},
%issn={1631-073X},
  %doi={10.1016/j.crma.2003.12.031},
}

\bib{BourgainBrezis2004}{article}{
  author={Bourgain, Jean},
  author={Brezis, Ha{\"{\i}}m},
  title={New estimates for the Laplacian, the div-curl, and related Hodge
  systems},
  journal={C. R. Math. Acad. Sci. Paris},
   volume={338},
  date={2004},
  number={7},
  pages={539--543},
%issn={1631-073X},
  %doi={10.1016/j.crma.2003.12.031},
}

\bib{BourgainBrezis2007}{article}{
   author={Bourgain, Jean},
   author={Brezis, Ha{\"{\i}}m},
   title={New estimates for elliptic equations and Hodge type systems},
   journal={J. Eur. Math. Soc. (JEMS)},
   volume={9},
  date={2007},
  number={2},
   pages={277--315},
  }
  
  \bib{BourgainBrezisMironescu}{article}{
   author={Bourgain, Jean},
   author={Brezis, Haim},
   author={Mironescu, Petru},
   title={$H^{1/2}$ maps with values into the circle: minimal
   connections, lifting, and the Ginzburg-Landau equation},
   journal={Publ. Math. Inst. Hautes \'Etudes Sci.},
   number={99},
   date={2004},
   pages={1--115},
  % issn={0073-8301},
}

\bib{Breit-Cianchi}{article}{
   author={Breit, Dominic},
   author={Cianchi, Andrea},
   title={Symmetric gradient Sobolev spaces endowed with
   rearrangement-invariant norms},
   journal={Adv. Math.},
   volume={391},
   date={2021},
   pages={Paper No. 107954, 101},
   issn={0001-8708},
   review={\MR{4303731}},
   doi={10.1016/j.aim.2021.107954},
}

\bib{BCS_canceling}{article}{AUTHOR = {Breit, Dominic},
    AUTHOR= {Cianchi, Andrea}, AUTHOR= {Spector, Daniel}
   title={Sobolev inequalities for canceling operators},
   journal={preprint at arXiv:2501.07874v2},
   volume={},
   date={},
   number={},
   pages={},
   issn={},
   review={},
}

% \bib{BrDF}{article}{
%     AUTHOR = {Breit, Dominic},
%     AUTHOR= {Diening, Lars}, 
%     AUTHOR= {Fuchs, Martin},
%      TITLE = {Solenoidal {L}ipschitz truncation and applications in fluid
%               mechanics},
%    JOURNAL = {J. Differential Equations},
%   FJOURNAL = {Journal of Differential Equations},
%     VOLUME = {253},
%       YEAR = {2012},
%     NUMBER = {6},
%      PAGES = {1910--1942},
%       ISSN = {0022-0396,1090-2732},
%    MRCLASS = {35Q35 (35A01 35D30 35J60 76A05)},
%   MRNUMBER = {2943947},
% MRREVIEWER = {Ilia\ S.\ Mogilevski\u{\i}},
%        DOI = {10.1016/j.jde.2012.05.010},
%        URL = {https://doi.org/10.1016/j.jde.2012.05.010},
% }

% \bib{CZ}{article}{
%    author={Calderon, A. P.},
%    author={Zygmund, A.},
%    title={On the existence of certain singular integrals},
%    journal={Acta Math.},
%    volume={88},
%    date={1952},
%    pages={85--139},
%    issn={0001-5962},
%    review={\MR{52553}},
%    doi={10.1007/BF02392130},
% }

\bib{cavaliere-new}{article}{AUTHOR = {Cavaliere, Paola},
    AUTHOR= {Drazny, Ladislav},
   title={Sobolev embeddings for Lorentz-Zygmund spaces },
   journal={preprint},
   volume={},
   date={},
   number={},
   pages={},
   issn={},
   review={},
}

\bib{cianchi_IUMJ}{article}{
   author={Cianchi, Andrea},
   title={A sharp embedding theorem for Orlicz-Sobolev spaces},
   journal={Indiana Univ. Math. J.},
   volume={45},
   date={1996},
   number={1},
   pages={39--65},
   issn={0022-2518},
   review={\MR{1406683}},
   doi={10.1512/iumj.1996.45.1958},
}

\bib{cianchi_CPDE}{article}{
   author={Cianchi, Andrea},
   title={Boundedness of solutions to variational problems under general
   growth conditions},
   journal={Comm. Partial Differential Equations},
   volume={22},
   date={1997},
   number={9-10},
   pages={1629--1646},
   issn={0360-5302},
   review={\MR{1469584}},
   doi={10.1080/03605309708821313},
}

\bib{Cianchi-JLMS}{article}{
   author={Cianchi, Andrea},
   title={Strong and weak type inequalities for some classical operators in
   Orlicz spaces},
   journal={J. London Math. Soc. (2)},
   volume={60},
   date={1999},
   number={1},
   pages={187--202},
   issn={0024-6107},
   review={\MR{1721824}},
   doi={10.1112/S0024610799007711},
}

% \bib{cianchi_PJM}{article}{
%    author={Cianchi, Andrea},
%    title={A fully anisotropic Sobolev inequality},
%    journal={Pacific J. Math.},
%    volume={196},
%    date={2000},
%    number={2},
%    pages={283--295},
%    issn={0030-8730},
%    review={\MR{1800578}},
%    doi={10.2140/pjm.2000.196.283},
% }

\bib{cianchi_ibero}{article}{
   author={Cianchi, Andrea},
   title={Optimal Orlicz-Sobolev embeddings},
   journal={Rev. Mat. Iberoamericana},
   volume={20},
   date={2004},
   number={2},
   pages={427--474},
   issn={0213-2230},
   review={\MR{2073127}},
   doi={10.4171/RMI/396},
}

\bib{cianchi_forum}{article}{
   author={Cianchi, Andrea},
   title={Higher-order Sobolev and Poincar\'{e} inequalities in Orlicz spaces},
   journal={Forum Math.},
   volume={18},
   date={2006},
   number={5},
   pages={745--767},
   issn={0933-7741},
   review={\MR{2265898}},
   doi={10.1515/FORUM.2006.037},
}

% \bib{cianchi-pick-slavikova}{article}{
%    author={Cianchi, Andrea},
%    author={Pick, Lubo\v{s}},
%    author={Slav\'{\i}kov\'{a}, Lenka},
%    title={Higher-order Sobolev embeddings and isoperimetric inequalities},
%    journal={Adv. Math.},
%    volume={273},
%    date={2015},
%    pages={568--650},
%    issn={0001-8708},
%    review={\MR{3311772}},
%    doi={10.1016/j.aim.2014.12.027},
% }

\bib{CPS_Frostman}{article}{
   author={Cianchi, Andrea},
   author={Pick, Lubo\v{s}},
   author={Slav\'{\i}kov\'{a}, Lenka},
   title={Sobolev embeddings, rearrangement-invariant spaces and Frostman
   measures},
   language={English, with English and French summaries},
   journal={Ann. Inst. H. Poincar\'{e} C Anal. Non Lin\'{e}aire},
   volume={37},
   date={2020},
   number={1},
   pages={105--144},
   issn={0294-1449},
   review={\MR{4049918}},
   doi={10.1016/j.anihpc.2019.06.004},
}

\bib{DG}{article}{
    AUTHOR = {Diening, Lars},
    AUTHOR = {Gmeineder, Franz},
     TITLE = {Continuity points via {R}iesz potentials for
              {$\mathbb{C}$}-elliptic operators},
   JOURNAL = {Q. J. Math.},
%  FJOURNAL = {The Quarterly Journal of Mathematics},
    VOLUME = {71},
      YEAR = {2020},
    NUMBER = {4},
     PAGES = {1201--1218},
%      ISSN = {0033-5606,1464-3847},
 %  MRCLASS = {26B30 (31C45)},
 % MRNUMBER = {4186516},
%MRREVIEWER = {Juha\ K.\ Kinnunen},
       DOI = {10.1093/qmathj/haaa027},
       URL = {https://doi.org/10.1093/qmathj/haaa027},
}

\bib{DG2}{article}{
    AUTHOR = {Diening, Lars},
    AUTHOR = {Gmeineder, Franz},
     TITLE = {Sharp Trace and Korn Inequalities for Differential Operators},
   JOURNAL = {Potent. Anal.},
%  FJOURNAL = {Potential Analysis},
    VOLUME = {},
      YEAR = {2024},
    NUMBER = {},
     PAGES = {},
%      ISSN = {},
%   MRCLASS = {},
%  MRNUMBER = {},
%MRREVIEWER = {},
       DOI = {10.1007/s11118-024-10165-1},
       URL = {https://doi.org/10.1007/s11118-024-10165-1},
}

% \bib{DRW}{article}{
%     AUTHOR = {Diening, Lars},
% AUTHOR= {R\r{u}\v{z}i\v{c}ka, Michael},
%  AUTHOR= {Wolf, J\"{o}rg},
%      TITLE = {Existence of weak solutions for unsteady motions of
%               generalized {N}ewtonian fluids},
%    JOURNAL = {Ann. Sc. Norm. Super. Pisa Cl. Sci. (5)},
%  % FJOURNAL = {Annali della Scuola Normale Superiore di Pisa. Classe di
%  %             Scienze. Serie V},
%     VOLUME = {9},
%       YEAR = {2010},
%     NUMBER = {1},
%      PAGES = {1--46},
%       ISSN = {0391-173X,2036-2145},
%  %  MRCLASS = {76D03 (34A34 35D30 35Q35 76A05)},
% %  MRNUMBER = {2668872},
% %MRREVIEWER = {Miroslav\ Bul\'{\i}\v{c}ek},
% }

\bib{EGP}{article}{
   author={Edmunds, David E.},
   author={Gurka, Petr},
   author={Pick, Lubo\v{s}},
   title={Compactness of Hardy-type integral operators in weighted Banach
   function spaces},
   journal={Studia Math.},
   volume={109},
   date={1994},
   number={1},
   pages={73--90},
   issn={0039-3223},
   review={\MR{1267713}},
}

% \bib{EKP}{article}{
%    author={Edmunds, D. E.},
%    author={Kerman, R.},
%    author={Pick, L.},
%    title={Optimal Sobolev imbeddings involving rearrangement-invariant
%    quasinorms},
%    journal={J. Funct. Anal.},
%    volume={170},
%    date={2000},
%    number={2},
%    pages={307--355},
%    issn={0022-1236},
%    review={\MR{1740655}},
%    doi={10.1006/jfan.1999.3508},
% }

\bib{EMMP}{article}{
   author={Edmunds, David E.},
   author={Mihula, Zden\v{e}k},
   author={Musil, V\'{\i}t},
   author={Pick, Lubo\v{s}},
   title={Boundedness of classical operators on rearrangement-invariant
   spaces},
   journal={J. Funct. Anal.},
   volume={278},
   date={2020},
   number={4},
   pages={108341, 56},
   issn={0022-1236},
   review={\MR{4044737}},
   doi={10.1016/j.jfa.2019.108341},
}

\bib{federer}{article}{
   author={Federer, Herbert},
   author={Fleming, Wendell H.},
   title={Normal and integral currents},
   journal={Ann. of Math. (2)},
   volume={72},
   date={1960},
   pages={458--520},
   issn={0003-486X},
   review={\MR{123260}},
   doi={10.2307/1970227},
}
	
\bib{gagliardo}{article}{
   author={Gagliardo, Emilio},
   title={Propriet\`a di alcune classi di funzioni in pi\`u variabili},
   language={Italian},
   journal={Ricerche Mat.},
   volume={7},
   date={1958},
   pages={102--137},
   issn={0035-5038},
   review={\MR{102740}},
}

\bib{GRV}{article}{
    AUTHOR = {Gmeineder, Franz},
    AUTHOR = {Rai\c t\u a, Bogdan},
    AUTHOR = {Van Schaftingen, Jean},
     TITLE = {On limiting trace inequalities for vectorial differential
              operators},
   JOURNAL = {Indiana Univ. Math. J.},
%  FJOURNAL = {Indiana University Mathematics Journal},
    VOLUME = {70},
      YEAR = {2021},
    NUMBER = {5},
     PAGES = {2133--2176},
      ISSN = {0022-2518,1943-5258},
%   MRCLASS = {35J05 (46E35)},
%  MRNUMBER = {4340491},
       DOI = {10.1512/iumj.2021.70.8682},
       URL = {https://doi.org/10.1512/iumj.2021.70.8682},
}

% \bib{FV}{article}{
%    author={Ferrari, Fausto and Verbitsky, Igor E.},
%    title={Radial fractional Laplace operators and Hessian inequalites},
%    journal={},
%    volume={},
%    date={},
%    number={},
%    pages={},
%    issn={},
%    review={},
%    doi={arXiv:1203.3149},
% }

% \bib{Gagliardo}{article}{
%    author={Gagliardo, Emilio},
%    title={Propriet\`a di alcune classi di funzioni in pi\`u variabili},
% %    language={Italian},
%    journal={Ricerche Mat.},
%    volume={7},
%    date={1958},
%    pages={102--137},
%    issn={0035-5038},
%    %review={\MR{0102740 (21 \#1526)}},
% }

\bib{grafakos}{book}{
   author={Grafakos, Loukas},
   title={Classical Fourier analysis},
   series={Graduate Texts in Mathematics},
   volume={249},
   edition={3},
   publisher={Springer, New York},
   date={2014},
   pages={xviii+638},
   isbn={978-1-4939-1193-6},
   isbn={978-1-4939-1194-3},
   review={\MR{3243734}},
   doi={10.1007/978-1-4939-1194-3},
}

% \bib{Greco-Moscariello}{article}{
%    author={Greco, L.},
%    author={Moscariello, G.},
%    title={An embedding theorem in Lorentz-Zygmund spaces},
%    journal={Potential Anal.},
%    volume={5},
%    date={1996},
%    number={6},
%    pages={581--590},
%    issn={0926-2601},
%    review={\MR{1437585}},
%    doi={10.1007/BF00275795},
% }

\bib{HS}{article}{
   author={Hernandez, Felipe},
   author={Spector, Daniel},
   title={Fractional integration and optimal estimates for elliptic systems},
   journal={Calc. Var. Partial Differential Equations},
   volume={63},
   date={2024},
   number={5},
   pages={Paper No. 117, 29},
   issn={0944-2669},
   review={\MR{4739434}},
   doi={10.1007/s00526-024-02722-8},
}

\bib{HRS}{article}{
   author={Hernandez, Felipe},
   author={Rai\c{t}\u{a}, Bogdan},
   author={Spector, Daniel},
   title={Endpoint $L^1$ estimates for Hodge systems},
   journal={Math. Ann.},
   volume={385},
   date={2023},
   number={3-4},
   pages={1923--1946},
   issn={0025-5831},
   review={\MR{4566709}},
   doi={10.1007/s00208-022-02383-y},
}

\bib{Hol}{article}{
   author={Holmstedt, Tord},
   title={Interpolation of quasi-normed spaces},
   journal={Math. Scand.},
   volume={26},
   date={1970},
   pages={177--199},
   issn={0025-5521},
   review={\MR{415352}},
   doi={10.7146/math.scand.a-10976},
}

\bib{KermanPick}{article}{
   author={Kerman, Ron},
   author={Pick, Lubo\v{s}},
   title={Optimal Sobolev imbeddings},
   journal={Forum Math.},
   volume={18},
   date={2006},
   number={4},
   pages={535--570},
   issn={0933-7741},
   review={\MR{2254384}},
   doi={10.1515/FORUM.2006.028},
}

\bib{LanzaniStein}{article}{
   author={Lanzani, Loredana},
   author={Stein, Elias M.},
   title={A note on div curl inequalities},
   journal={Math. Res. Lett.},
   volume={12},
   date={2005},
   number={1},
   pages={57--61},
   issn={1073-2780},
   review={\MR{2122730}},
   doi={10.4310/MRL.2005.v12.n1.a6},
}

\bib{mazya}{article}{
   author={Maz\cprime ja, V. G.},
   title={Classes of domains and imbedding theorems for function spaces},
   language={Russian},
   journal={Dokl. Akad. Nauk SSSR},
   volume={133},
   pages={527--530},
   issn={0002-3264},
   translation={
      journal={Soviet Math. Dokl.},
      volume={1},
      date={1960},
      pages={882--885},
      issn={0197-6788},
   },
   review={\MR{126152}},
}

\bib{mihula}{article}{
   author={Mihula, Zden\v{e}k},
   title={Embeddings of homogeneous Sobolev spaces on the entire space},
   journal={Proc. Roy. Soc. Edinburgh Sect. A},
   volume={151},
   date={2021},
   number={1},
   pages={296--328},
   issn={0308-2105},
   review={\MR{4202643}},
   doi={10.1017/prm.2020.14},
}

% \bib{Mizuta}{book}{
%    author={Mizuta, Yoshihiro},
%    title={Potential theory in Euclidean spaces},
%    series={GAKUTO International Series. Mathematical Sciences and
%    Applications},
%    volume={6},
%    publisher={Gakk$\B$ otosho Co., Ltd., Tokyo},
%    date={1996},
%    pages={viii+341},
%    isbn={4-7625-0415-7},
%    review={\MR{1428685}},
% }

\bib{nirenberg}{article}{
   author={Nirenberg, L.},
   title={On elliptic partial differential equations},
   journal={Ann. Scuola Norm. Sup. Pisa Cl. Sci. (3)},
   volume={13},
   date={1959},
   pages={115--162},
   issn={0391-173X},
   review={\MR{109940}},
}

\bib{oneil}{article}{
   author={O'Neil, Richard},
   title={Convolution operators and {$L(p,\,q)$} spaces},
   journal={Duke Math. J.},
   volume={30},
   date={1963},
   pages={129--142},
   issn={0012-7094},
   %review={\MR{0194881}},
   %doi={10.2307/1994271},
}

\bib{peetre}{article}{
   author={Peetre, Jaak},
   title={Espaces d'interpolation et th\'{e}or\`eme de {S}oboleff},
   journal={Ann. Inst. Fourier (Grenoble)},
   volume={16},
   date={1966},
   pages={279--317},
   issn={0373-0956},
   review={\MR{MR221282}},
   doi={ },
}

\bib{Pisier}{article}{
   author={Pisier, Gilles},
   title={Interpolation between $H^p$ spaces and noncommutative
   generalizations. I},
   journal={Pacific J. Math.},
   volume={155},
   date={1992},
   number={2},
   pages={341--368},
   issn={0030-8730},
   review={\MR{1178030}},
}

\bib{RaitaSpector}{article}{
   author={Rai\c{t}\u{a}, Bogdan},
   author={Spector, Daniel},
   title={A note on estimates for elliptic systems with $L^1$ data},
   journal={C. R. Math. Acad. Sci. Paris},
   volume={357},
   date={2019},
   number={11-12},
   pages={851--857},
   issn={1631-073X},
   review={\MR{4038260}},
   doi={10.1016/j.crma.2019.11.007},
}

\bib{RSS}{article}{
   author={Rai\c{t}\u{a}, Bogdan},
   author={Spector, Daniel},
   author={Stolyarov, Dmitriy},
   title={A trace inequality for solenoidal charges},
   journal={Potential Anal.},
   volume={59},
   date={2023},
   number={4},
   pages={2093--2104},
   issn={0926-2601},
   review={\MR{4684387}},
   doi={10.1007/s11118-022-10008-x},
}

\bib{SSVS}{article}{
   author={Schikorra, Armin},
   author={Spector, Daniel},
   author={Van Schaftingen, Jean},
   title={An $L^1$-type estimate for Riesz potentials},
   journal={Rev. Mat. Iberoam.},
   volume={33},
   date={2017},
   number={1},
   pages={291--303},
   issn={0213-2230},
   review={\MR{3615452}},
   doi={10.4171/RMI/937},
}

\bib{sobolev}{article}{
   author={Sobolev, S.L.},
    title={On a theorem of functional analysis},
   journal={Mat. Sb.},
   volume={4},
   number={46},
  year={1938},
  language={Russian},
   pages={471-497},
   translation={
      journal={Transl. Amer. Math. Soc.},
      volume={34},
     date={},
      pages={39-68},
   },
   }

\bib{Spector-VanSchaftingen-2018}{article}{
   author={Spector, Daniel},
   author={Van Schaftingen, Jean},
   title={Optimal embeddings into Lorentz spaces for some vector
   differential operators via Gagliardo's lemma},
   journal={Atti Accad. Naz. Lincei Rend. Lincei Mat. Appl.},
   volume={30},
   date={2019},
   number={3},
   pages={413--436},
   issn={1120-6330},
   review={\MR{4002205}},
   doi={10.4171/RLM/854},
}

\bib{Stolyarov}{article}{
   author={Stolyarov, D. M.},
   title={Hardy-Littlewood-Sobolev inequality for $p=1$},
   language={Russian, with Russian summary},
   journal={Mat. Sb.},
   volume={213},
   date={2022},
   number={6},
   pages={125--174},
   issn={0368-8666},
   translation={
      journal={Sb. Math.},
      volume={213},
      date={2022},
      number={6},
      pages={844--889},
      issn={1064-5616},
   },
   review={\MR{4461456}},
   doi={10.4213/sm9645},
}

\bib{Strauss}{article}{
   author={Strauss, Monty J.},
   title={Variations of Korn's and Sobolev's equalities},
   conference={
      title={Partial differential equations},
      address={Univ. California,
      Berkeley, Calif.},
      date={1971},
   },
   book={
      publisher={Amer. Math. Soc., Providence, R.I.},
      series={Proc. Sympos. Pure Math.},
      volume={XXIII},
   },
   date={1973},
   pages={207--214},
}

% \bib{Stein}{book}{
%    author={Stein, Elias M.},
%    title={Singular integrals and differentiability properties of functions},
%    series={Princeton Mathematical Series, No. 30},
%    publisher={Princeton University Press, Princeton, N.J.},
%    date={1970},
%    pages={xiv+290},
%    %review={\MR{0290095}},
% }

% \bib{SteinWeiss}{article}{
%    author={Stein, Elias M.},
%    author={Weiss, Guido},
%    title={On the theory of harmonic functions of several variables. I. The
%    theory of $H^{p}$-spaces},
%    journal={Acta Math.},
%    volume={103},
%    date={1960},
%    pages={25--62},
%    issn={0001-5962},
%    %review={\MR{0121579}},
%  %  doi={10.1007/BF02546524},
% }

\bib{VS}{article}{
   author={Van Schaftingen, Jean},
   title={A simple proof of an inequality of Bourgain, Brezis and Mironescu},
   language={English, with English and French summaries},
   journal={C. R. Math. Acad. Sci. Paris},
   volume={338},
   date={2004},
   number={1},
   pages={23--26},
   issn={1631-073X},
   review={\MR{2038078}},
   doi={10.1016/j.crma.2003.10.036},
}

\bib{VS2}{article}{
   author={Van Schaftingen, Jean},
   title={Estimates for $L^1$ vector fields under higher-order differential
   conditions},
   journal={J. Eur. Math. Soc. (JEMS)},
   volume={10},
   date={2008},
   number={4},
   pages={867--882},
   issn={1435-9855},
   review={\MR{2443922}},
   doi={10.4171/JEMS/133},
}

\bib{VS2a}{article}{
   author={Van Schaftingen, Jean},
   title={Limiting fractional and Lorentz space estimates of differential
   forms},
   journal={Proc. Amer. Math. Soc.},
   volume={138},
   date={2010},
   number={1},
   pages={235--240},
   issn={0002-9939},
%    review={\MR{2550188}},
   doi={10.1090/S0002-9939-09-10005-9},
}
\bib{VS3}{article}{
   author={Van Schaftingen, Jean},
   title={Limiting Sobolev inequalities for vector fields and canceling
   linear differential operators},
   journal={J. Eur. Math. Soc. (JEMS)},
   volume={15},
   date={2013},
   number={3},
   pages={877--921},
   issn={1435-9855},
   review={\MR{3085095}},
   doi={10.4171/JEMS/380},
}

\bib{VS4}{article}{
   author={Van Schaftingen, Jean},
   title={Limiting Bourgain-Brezis estimates for systems of linear
   differential equations: theme and variations},
   journal={J. Fixed Point Theory Appl.},
   volume={15},
   date={2014},
   number={2},
   pages={273--297},
   issn={1661-7738},
%    review={\MR{3298002}},
   doi={10.1007/s11784-014-0177-0},
}

% \bib{Zygmund}{article}{
%    author={Zygmund, A.},
%    title={On a theorem of Marcinkiewicz concerning interpolation of
%    operations},
%    journal={J. Math. Pures Appl. (9)},
%    volume={35},
%    date={1956},
%    pages={223--248},
%    %issn={0021-7824},
%    %review={\MR{0080887}},
% }

\end{biblist}
	
\end{bibdiv}

\end{document}